\theoremstyle{plain}
\newtheorem{theorem}{Theorem}[section]
\newtheorem{definition}[theorem]{Definition}
\newtheorem{proposition}[theorem]{Proposition}
\newtheorem{lemma}[theorem]{Lemma}
\newtheorem{corollary}[theorem]{Corollary}
\def\op{\operatorname}
\def\C{\mathbb{C}}
\def\Bl{\op{Bl}}
\def\E{\mathscr{E}}
\def\F{\mathscr{F}}
\def\K{\mathscr{K}}
\def\I{\mathscr{I}}
\def\J{\mathscr{J}}
\def\O{\mathscr{O}}
\def\T{\mathcal{T}}
\def\L{\mathscr{L}}
\def\N{\mathscr{N}}
\def\P{\mathbf{P}}
\def\H{\mathcal{H}}
\def\Proj{\op{Proj}}
\def\Pic{\op{Pic}}
\def\Ext{\op{Ext}}
\def\sExt{\mathscr{E}\kern -.5pt xt}
\newcommand*{\sHom}{\mathscr{H}\kern -.5pt om}
\renewcommand{\tilde}{\widetilde}
\def\APic{\op{APic}}
\def\GDiv{\op{GDiv}}
\def\GPic{\op{GPic}}
\def\ACart{\op{ACart}}
\def\Cart{\op{Cart}}
\def\Ann{\op{Ann}}
\def\Fitt{\op{Fitt}}
\def\Sym{\op{Sym}}
\begin{document}

\title{Space curves on surfaces with ordinary singularities}
\author{Mengyuan Zhang}
\address{Department of Mathematics, University of California, 
Berkeley, CA 94720}
\email{myzhang@berkeley.edu}

\begin{abstract}
We show that smooth curves in the same biliaison class on a hypersurface in $\P^3$ with ordinary singularities are linearly equivalent.
We compute the invariants $h^0(\I_C(d))$, $h^1(\I_C(d))$ and $h^1(\O_C(d))$ of a curve $C$ on such a surface $X$ in terms of the cohomologies of divisors on the normalization of $X$.
We then study general projections in $\P^3$ of curves lying on the rational normal scroll $S(a,b)\subset\P^{a+b+1}$.
If we vary the curves in a linear system on $S(a,b)$ as well as the projections, we obtain a family of curves in $\P^3$.
We compute the dimension of the space of deformations of these curves in $\P^3$ as well as the dimension of the family.
We show that the difference is a linear function in $a$ and $b$ which does not depend on the linear system.
Finally, we classify maximal rank curves on ruled cubic surfaces in $\P^3$. 
We prove that the general projections of all but finitely many classes of projectively normal curves on $S(1,2)\subset\P^4$ fail to have maximal rank in $\P^3$. 
These give infinitely many classes of counter-examples to a question of Hartshorne \cite{H2}.
\end{abstract}

\maketitle

\section*{Introduction}
The study of algebraic curves is a central topic of algebraic geometry.
Since every smooth projective curve embeds in $\P^3$, a lot of research has been devoted to the study of space curves.
An important way to produce examples of space curves is to use high degree divisors on the curves to embed them in large dimensional projective spaces, and then project them isormophically into $\P^3$.
Another important perspective is to realize curves as divisors on an ambient surface.
In this paper, we combine both methods and study curves in $\P^3$ lying on general projections of a smooth surface in higher dimensional projective space.

\bigskip

For the setup, let $\pi:S\to X$ be a dominant finite birational map from a smooth projective surface $S$ to a hypersurface $X\subset\P^3$.
We assume $\pi$ is not an isomorphism and satisfies certain genericity assumptions, such as the general projections of smooth surfaces into $\P^3$.
Two curves $C$ and $D$ on $X$ are linked if the $C$ and $\O_X(m)-D$ are linearly equivalent as generalized divisors on $X$ for some positive $m$.
Curves in the linear equivalence classes of $\O_X(m)+C$ for any $m$ are said to be in the biliaison class of $C$ on $X$.
See \cite{MDP} for background on liaison theory and \cite{H1} for the interpretation of liaison theory via generalized divisors.
We summarize the main results of this paper below.

\bigskip

In \Cref{Section1}, we study the liaison theory of curves on $X$.
One main result is that smooth curves on $X$ in the same biliaison class are unique up to linear equivalence.
More precisely, if $C$ is a smooth curve on $X$ not supported on any components of the singular locus, then no divisor in the class of $\O_X(m)+C$ is smooth if $m\ne 0$.
This is in clear contrast to the situation on smooth surfaces.
Next, we give a criterion of linkage of curves on $X$, and show that certain homological invariants of a curve $C$ on $X$ can be computed in terms of cohomologies of divisors on $S$, provided that $C$ is preserved by the morphism $\pi$ or is linked to such a curve. 
Examples of homological invariants of $C$ include the Hilbert function $h^0(\I_C(n))$, the Rao function $h^1(\I_C(n))$, the specialty function $h^1(\I_C(n))$ and the dimension of the tangent space $h^0(\N_C)$ at the corresponding point in the Hilbert scheme.
In particular, this means that if $S$ is a surface where the cohomology of divisors can be computed easily, such as rational normal scrolls, certain other rational surfaces, toric surfaces etc., then we can determine a lot of information of the general projection of a curve $C$ on $S$ into $\P^3$.
If we vary the curves in a linear system on a surface in a high dimensional projective space and vary the projections, then we obtain an interesting family of curves in $\P^3$.

\bigskip

In \Cref{Section2}, we use the results in \Cref{Section1} to study general projections of curves lying on a rational normal scroll $S(a,b)\subset\P^{a+b+1}$.
We use results of \cite{KLU} and classical theorems on general projections of surfaces to enumerate the singularities of the image surface $X \subset \P^3$.
We compute the dimension of the family of curves in $\P^3$ arising from various projections of curves varying in a given linear system on $S(a,b)$. 
In particular, we show that the difference between the dimension of the tangent space at the corresponding points in the Hilbert scheme and the dimension of the family is a linear function in $a$ and $b$, which does not depend on the linear system chosen.
This generalizes Gruson and Peskine's example of families of curves on a ruled cubic surface in $\P^3$, and gives interesting families of curves in $\P^3$ whose dimensions of the tangent spaces in the Hilbert scheme can be arbitrarily large compared to the dimension of the family.
Last but not least, we determine all maximal rank curves on a ruled cubic surface.
In particular, we show that the general projections of all but finitely many classes of projective normal curves on the cubic scroll $S(1,2)\subset\P^4$ fail to have maximal rank in $\P^3$.
This provides infinitely many classes of counter-examples to a question raised by Hartshorne.

\bigskip

We provide some historical context to the investigations in this article.

\bigskip

In 1882, the Berlin Academy of Sciences awarded the Steiner prize to M. Noether and G. Halphen for their work on the classification of space curves. 
One problem in the classification is to determine all the possible pairs of values of degree and genus for a smooth connected projective curve in $\P^3$.
Except for plane curves and curves on a smooth quadric surface, which are completely understood, Halphen proved that the degree $d$ and genus $g$ of all smooth space curves must satisfy $g\le \frac{1}{3}d(d-3)+1$.
A century later, Gruson and Peskine \cite{GP} showed that all values of $g$ satisfying this inequality can be obtained.
For $\frac{1}{\sqrt{3}}d^{3/2}-d+1\le g\le \frac{1}{6}d(d-3)+1$, curves with this $(d,g)$ can be found on a smooth cubic surface. 
For $0\le g\le \frac{1}{8}(d-1)^2$, curves with this $(d,g)$ can be found on a quartic surface with a double line.
This completes the classification of all possible degree and genus of smooth space curves.

Since the geometry of a smooth cubic surface is well studied, much is known for curves lying on smooth cubic surfaces.
Notably, Mumford \cite{Mumford} showed that there is a generically nonreduced component in the Hilbert scheme $\H^3_{14,24}$ of degree 14 genus 24 curves in $\P^3$, whose general point corresponds to a curve lying on a smooth cubic surface. 
Kleppe \cite{Kleppe} carried out a systematic study of curves on smooth cubic surfaces and extended Mumford's results in certain cases.
On the other hand, less is known for curves on singular surfaces, especially those with one-dimensional singular locus.
Since such surfaces are no longer normal, the theory of Weil divisors, which is a cornerstone of the study of surfaces, breaks down.
To remedy this problem, Hartshorne \cite{H1} developed the theory of generalized divisors on a Gorenstein scheme.
Since hypersufaces are Gorenstein, this allows us to discuss curves on any surface in $\P^3$, even if the surface is reducible or nonreduced.
In the same paper, Hartshorne carried out a detailed study of curves on the ruled cubic surface in $\P^3$.
Using these results and bounds on the dimension of the Hilbert scheme in certain cases, Hartshorne \cite{Nonsmooth} found the first example of an irreducible curve in $\P^3$ that is not smoothable.

A recent advance in the study of algebraic curves is the proof of the longstanding maximal rank conjecture by E. Larson \cite{Larson}.
A projective variety $V\subset \P^n$ is said to have maximal rank if the maps of vector spaces $H^0(\O_{\P^n}(d))\to H^0(\O_V(d))$ have maximal rank for all $d\ge 0$.
This is equivalent to the condition that $H^0(\I_{V}(d)) = 0$ or $H^1(\I_V(d)) = 0$ for all $d\ge 0$.
Severi in 1915 conjectured that a general embedding of a general curve of genus $g$ has maximal rank. 
This conjecture can be made precise using Brill-Noether theory, developed by Griffiths-Harris \cite{BN}, Gieseker \cite{Gieseker}, Kleiman-Laksov \cite{KL} and others. 
As a consequence of the maximal rank conjecture, the Hilbert functions of general curves under general embeddings are completely determined.
A related question was asked by Hartshorne in \cite{H2}:
If $C\subset\P^N$ is a projectively normal curve, then does a sufficiently general projection of $C$ in $\P^3$ have maximal rank?
Counter-examples have been found by Gruson-Peskine \cite{GP2}.
On the other hand, Ballico and Ellia \cite{BE} proved that the statement is true if $0\le g\le 3$ and the degree $d$ of the curve is large compared to its genus $g$.
Complementary to this result, the counter-examples provided in this paper can have arbitrarily large genus $g$, and the degree $d$ is small compared to $g$.
In view of the maximal rank theorem, these are embeddings of curves that are ``general" (in the sense of having maximal rank), but whose general projections in $\P^3$ are ``special". 

\subsection*{Acknowledgement}
The author thanks Hartshorne for initiating his interest in space curves and for suggesting to study curves on ruled cubic surfaces.

\setcounter{section}{-1}
\section{Background}
We work over the field of complex numbers $\C$.
By a curve we mean a one-dimensional projective scheme without point components, isolated or embedded.
By a point on a finite type $\C$-scheme we mean a closed point.
We denote the arithmetic genus of a curve $C$ by $g(C)$. 

\bigskip

In this section, we review the multiple-point formulas developed by Kleiman \cite{Kleiman} and Kleiman-Lipman-Ulrich \cite{KLU}.
These formulas enumerate the singularities of the images of the projection.
We also review the classical theorems on the singularities of general projections of smooth surfaces into $\P^3$.
Last but not least, we review the theory of generalized divisors developed by Hartshorne \cite{H1} in order to discuss divisors on singular surfaces.

\subsection{Multiple point formulas}\

We fix a finite morphism $f:X\to Y$ of finite type $\C$-schemes.

\begin{definition}[Multiple-point and ramification loci]
Let $N_r$ denote the subset of points $y \in Y$ such that $f^{-1}(y):= X\times_Y k(y)$ contains a zero dimensional subscheme of length $r$.
By fiber continuity, the subset $N_r$ is closed in $Y$. 
Let $M_r := f^{-1}(N_r)$ denote the closed subset in $X$.
We call $M_r$ and $N_r$ the source and target $r$-fold points of $f$ respectively.
\end{definition}

The central theme of the theory of multiple-point formulas is to find appropriate scheme structures on $M_r$ and $N_r$, and to determine the classes $[M_r]$ and $[N_r]$ in the Chow ring (or other cohomology rings).
The subject started in 1850 and still inspires current research.
For example, such a formula can give the degree and genus of the curve of trisecant lines of a given space curve $C$, as well as the number of quadrisecant lines.
See \cite[V]{Kleiman} for a survey on the subject of multiple-point formulas.
See \cite{Kleiman1} for an approach using iterations, and \cite{Kleiman2} for an approach using Hilbert schemes. 

\bigskip

In codimension one, Kleiman-Lipman-Ulrich \cite{KLU} considered the subscheme structures on $N_r$ given by the Fitting ideals $\op{Fitt}_{r-1}^{\O_Y}(f_* \O_X)$ and the corresponding preimage subscheme structures on $M_r$.
Under mild assumptions, the subschemes $M_r$ and $N_r$ are Cohen-Macaulay, and their classes $[M_r]$ and $[N_r]$ are compatible with those coming from iteration \cite{Kleiman1}.
We summarize here some of the results of \cite{Kleiman1} and \cite{KLU}.

\begin{definition}
With notations as above, let $R_i$ be the subscheme of $X$ defined by the fitting ideal $\Fitt_{i-1}^{\O_X}(\Omega_{X/Y})$. 
By the conormal sequence $f^* \Omega^1_{Y/\mathbb{C}} \to \Omega^1_{X/\C} \to \Omega_{X/Y} \to 0$, 
the scheme $R_i$ is supported at points in $X$ where the differential $\partial f$ drops rank by at least $i$. 
We call $R := R_1$ the ramification locus. 
\end{definition}

\begin{theorem}[Kleiman-Lipman-Ulrich]\label{MultiplePoint}\
\begin{enumerate}[label=(\roman*)]
\item Suppose $f$ is locally of flat dimension 1 and birational onto its image, then $N_1$ is equal to the scheme-theoretical image $f(X)$ and $f_*[M_1] = [N_1]$. 
\item Suppose furthermore that $Y$ satisfies ($S_2$).
The ideal sheaf $\I_{N_2/Y}$ is equal to $\Ann_{\O_{Y}}(f_*\O_X/\O_{Y})$ and the ideal sheaf $\I_{N_2/N_1}$ is equal to $\Ann_{\O_{N_1}}(f_*\O_X/\O_{N_1})$.
Each component of $M_2$ has codimension 1
and maps onto a component of $N_2$.
Each component of $N_2$ has codimension 2.
The fundamental cycles of these two schemes are related by the equation
$f_*[M_2] = 2[N_2]$.
The $\O_Y$-modules $\O_{N_2}$ and $f_*\O_{M_2}$ are perfect of grade 2.
\item For $r>0$, suppose furthermore that $Y$ satisfies ($S_r$) and $R_2 = \varnothing$. 
If each component of $N_r$ has codimension $r$, then $\O_{N_r}$ and 
$f_*\O_{M_r}$ are perfect $\O_Y$-modules of grade $r$.
Each component of $M_r$ has codimension
$r-1$ and maps onto a component of $N_r$, and the fundamental cycles of these two
schemes are related by the equation $f_*[M_r] = r[N_r]$. 
\end{enumerate}
Suppose $f$ satisfies all the assumptions above for all $r>0$.
If $f$ is a local complete intersection and $X$ has no embedded components, then 
\[
[M_{r+1}] = f^*f_* [M_r] - r c_1(\nu)[M_r].
\]
Here $\nu = f^* \T_Y - \T_X$ is the virtual normal bundle in the Grothendieck group $K(X)$.
\end{theorem}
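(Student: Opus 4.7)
The plan is to reduce all four assertions to local commutative-algebra statements about the finite $\O_Y$-algebra $\F := f_*\O_X$ and its Fitting ideals, and then to invoke the iteration principle of Kleiman together with excess intersection theory. Passing to a Noetherian local ring $(A,\mathfrak{m})$ with finite $A$-algebra $B$, the scheme $N_r$ corresponds to $\op{Spec}(A/\Fitt_{r-1}^A(B))$ and $M_r$ to $\op{Spec}(B/\Fitt_{r-1}^A(B)\cdot B)$, so every claim becomes a statement about Fitting ideals, depth, and the structure of $A \to B$.

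For (i), since $f$ is birational onto its image $B$ has generic rank one, so a presentation by a square matrix $A^n \xrightarrow{\phi} A^n \to B \to 0$ is available; then $\Fitt_0(B) = \det(\phi)\cdot A = \Ann(B)$ cuts out the scheme-theoretic image, and $f_*[M_1] = [N_1]$ is a generic-degree computation. For (ii), the central identification is $\Fitt_1(B) = \Ann_A(B/A)$ (the conductor), which holds for modules of projective dimension one with a square presentation. Together with the Hilbert-Burch theorem this produces a length-two free resolution of $\O_{N_2}$ and $f_*\O_{M_2}$, exhibiting them as perfect of grade $2$; the $S_2$ hypothesis on $Y$ upgrades pointwise equalities to global ones since both ideals are unmixed of codimension two, and $f_*[M_2] = 2[N_2]$ follows from $M_2 \to N_2$ being generically two-to-one. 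For (iii), I would proceed by induction on $r$ via the iteration construction of \cite{Kleiman1}: form the fibered product $X \times_Y X$, use $R_2 = \varnothing$ to control the diagonal, and realize $M_2$ via its residual; iterating builds $M_{r+1}$ out of $M_r$. The $S_r$ hypothesis and Eagon-Northcott-type resolutions then give the perfect grade-$r$ property, while $f_*[M_r] = r[N_r]$ is another generic-fiber count.

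The recursive formula $[M_{r+1}] = f^*f_*[M_r] - r\, c_1(\nu)[M_r]$ will come from excess intersection applied to the diagonal $\Delta_X: X \hookrightarrow X \times_Y X$. The cycle $f^*f_*[M_r]$ represents the fibered preimage of $N_r$ in $X$; along the locus $M_r$ it decomposes into the next-stratum $M_{r+1}$, counted with the correct multiplicity, plus an excess contribution supported on $M_r$ itself. This excess is given by the top Chern class of the excess normal bundle on $\Delta_X$, which is the conormal of $\Delta_X$ in $X\times_Y X$; a standard fiber-square computation identifies this conormal with $\nu|_{M_r}$, so the correction is $r c_1(\nu)[M_r]$. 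The main obstacle is to maintain compatibility between the scheme-theoretic structures on the $M_r$ and $N_r$ coming from Fitting ideals and those produced by the iterative residual-intersection construction; this is exactly where the assumptions of local complete intersection, no embedded components, $R_2=\varnothing$ and $(S_r)$ combine to ensure that all residual schemes have the expected codimension, so that the cycle identities from excess intersection agree with the Fitting-ideal definitions. Once this bookkeeping is verified, the formula is a direct consequence of excess intersection on the diagonal.
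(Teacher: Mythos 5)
This theorem is not proved in the paper: it is stated explicitly as a summary of results quoted from \cite{Kleiman1} and \cite{KLU}, so there is no in-paper argument to compare yours against. Your sketch does correctly identify the architecture of the proofs in those references --- local reduction to Fitting ideals of the finite algebra $f_*\O_X$, the conductor identity, perfection via Hilbert--Burch and Eagon--Northcott type resolutions, and the iteration-plus-excess-intersection derivation of the recursive cycle formula.

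As a proof, however, the proposal has genuine gaps, because several steps you assert in a single line are themselves the main theorems of the cited papers rather than routine verifications. First, $\Fitt_0(B)=\Ann_A(B)$ is not automatic: for a module generated by $n$ elements one only has $\Ann(M)^n\subseteq\Fitt_0(M)\subseteq\Ann(M)$ in general, and the equality here needs the square presentation with nonzerodivisor determinant together with birationality onto the image. Second, the identity $\Fitt_1(B)=\Ann_A(B/A)$ (the conductor) is a substantial theorem that uses the $(S_2)$ hypothesis on $Y$ and the flat-dimension-one hypothesis in an essential way; it does not follow merely from ``projective dimension one with a square presentation,'' and it is precisely why the theorem's part (ii) carries the extra $(S_2)$ assumption. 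Third, and most seriously, the compatibility between the Fitting-ideal scheme structures on $M_r$ and $N_r$ and the cycles produced by Kleiman's iteration/residual construction --- which you defer as ``bookkeeping'' --- is the central technical achievement of \cite{KLU}; without it the excess-intersection computation on the diagonal of $X\times_Y X$ computes the class of a possibly different cycle, and the hypotheses $R_2=\varnothing$, local complete intersection, and no embedded components enter exactly at this point. So your proposal is a correct road map through the literature but not a self-contained proof; for the purposes of this paper the statement is appropriately handled the way the author handles it, namely by citation.
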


\subsection{General linear projections of smooth surfaces into $\P^3$}\

Every smooth projective surface $S\subset \P^N$ for $N>5$ can be projected isomorphically into $\P^5$, but not further down in general.
The following is a summary of the classical results on the singularities of general linear projections of $S$ into $\P^3$.

\begin{theorem}\label{Projection}
Let $S\subset \P^5$ be a non-degenerate smooth projective surface that is not the Veronese surface, then the following are true for a general linear projection $f:S\to \P^3$.
\begin{enumerate}
  	\item The map $f$ is birational onto its scheme-theoretical image $X$, which is an integral hypersurface in $\P^3$.
 	\item The second ramification locus $R_2$ and the quadruple-point loci $M_4$ and $N_4$ are empty.
    \item The target double-point locus $N_2$ is an integral curve and is exactly the singular locus of $X$.
    The target triple-point locus $N_3$ is a reduced set of points and is exactly the singular locus of $N_2$.
    Each point of $N_3$ is an ordinary triple point of $N_2$ and of $X$.
    
    \item The source double-point locus $M_2$ is an integral curve mapping generically 2-1 to $N_2$.
    The source triple-point locus $M_3$ is a reduced set of points and is exactly the singular locus of $M_2$.
    Each point of $M_3$ is a simple node of $M_2$.
    Three distinct points of $M_3$ map to each point of $N_3$.
    \item The ramification locus $R$ is a set of reduced points and consisting exactly of the ramification points of the double cover $M_2\to N_2$.
    The images of points in $R$ are exactly the pinch points of $X$. 
    \item Every point of $N_2-N_3-f(R_1)$ is an ordinary double point of $X$.
\end{enumerate}
If $S$ is the Veronese surface, then a general linear projection of $S$ in $\P^3$ is called a Steiner surface. 
In this case, the scheme $N_2$ is the union of three reduced lines $L_1,L_2,L_3$ meeting at a point $p$.
The scheme $M_2$ is three reduced conics $C_1,C_2,C_3$ meeting each other at one point, where the three intersection points all map to $p$.
The conic $C_i$ maps 2-1 to $L_i$ with two ramification points. 
The six branch points are the pinch points of $X$.
In particular, all the results above are true except that $M_2$ and $N_2$ are reducible.
\end{theorem}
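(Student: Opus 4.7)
The plan is to parametrize linear projections $f_L:S\to\P^3$ by their centers $L$, a line in $\P^5$ disjoint from $S$, then identify each multiple-point locus $N_r$ with an $r$-secant image and apply standard dimension and transversality arguments stratum-by-stratum. For any $p\in\P^3$, the fiber is $f_L^{-1}(p) = \langle L,p\rangle\cap S$, so $p\in N_r$ precisely when the $2$-plane $\langle L,p\rangle\subset\P^5$ meets $S$ in a subscheme of length at least $r$. Since the Grassmannian of $2$-planes in $\P^5$ has dimension $9$ and ``$\Pi$ contains a fixed point'' is of codimension $3$, the variety of $r$-secant $2$-planes has the expected dimension $9-r$ whenever $S$ is non-degenerate and not the Veronese. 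Intersecting with the $3$-dimensional family of $2$-planes through a fixed $L$ gives $\dim N_r\le 3-r$ for generic $L$, so $N_2$ is at most a curve, $N_3$ at most finite, and $N_4=\varnothing$. An analogous incidence count on tangent planes (a $2$-plane $T_pS$ meets a generic line $L$ only on a codimension-$2$ locus in $S$) yields $R_2=\varnothing$ and $R$ finite. Irreducibility of the bisecant and trisecant varieties for non-Veronese $S$ then gives integrality of $X$, $N_2$, and $M_2$; the push-forward relations $f_*[M_r]=r[N_r]$ from \Cref{MultiplePoint} pin down the degree correspondences.

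With dimensions in place, the local analytic structure at each singular point reduces to verifying that the tangent planes of the distinct sheets of $f_L$ through the point are in general position in $\P^3$. I would establish this by a transversality argument on the incidence of tangent $2$-planes in $\P^5$: for generic $L$, the tangent planes of two (resp.\ three) coincident sheets map to transverse $2$-planes in $\P^3$, giving an analytic germ $\{xy=0\}$ (resp.\ $\{xyz=0\}$), i.e., an ordinary double (resp.\ triple) point. At a ramification point two sheets of $f_L$ coalesce, and genericity forces the simplest fold type, producing the standard Whitney umbrella $\{z^2=xy^2\}$ as the local model of $X$, that is, a pinch point. These are the only generic stable local models of a map-germ $(\C^2,0)\to(\C^3,0)$ in Mather's classification, and a parameter count on $L$ shows that higher-order degenerations occur only on a proper closed subset of the Grassmannian.

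The Veronese surface $V\subset\P^5$ is the one exception because $\mathrm{Sec}(V)$ is $4$-dimensional rather than the expected $5$: every rank-two symmetric $3\times 3$ matrix decomposes as a sum of two rank-one symmetric matrices in a $1$-parameter family, so every point of $\mathrm{Sec}(V)$ lies on a conic of secants to $V$. The dimension estimates above therefore drop by one, and the image of a generic projection becomes the classical Steiner surface, whose double curve consists of three concurrent lines meeting at a unique triple point and whose source double curve consists of three conics mapping $2$-to-$1$ onto those lines, as can be verified directly from the $|2H|$-embedding of $\P^2$. The hardest part of the argument is arranging simultaneous transversality across all multiple-point strata at once; once that is handled by the parameter counts above, the structural claims follow mechanically from \Cref{MultiplePoint} together with the local normal forms. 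This is essentially the content of the classical theorems of Mather, Roberts, and others alluded to in the introduction.
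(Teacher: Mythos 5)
The paper does not actually prove this theorem: its ``proof'' is a pointer to the classical literature (\cite{Harris}, \cite{Tour}, \cite{JR1}), so your sketch should be judged against the arguments in those references rather than against anything in the text. Measured that way, your outline has the right skeleton and is essentially a pr\'ecis of Roberts' approach: parametrize projections by centers $L$, identify $N_r$ with the planes through $L$ that are $r$-secant to $S$, use Kleiman-type transversality to cut the secant loci down to their expected dimensions $3-r$, and then install the stable local models (transverse sheets, Whitney umbrella) at each stratum; the Veronese exception via the deficiency of $\mathrm{Sec}(V)$ is also the standard explanation. Your expected-dimension counts ($9-r$ for $r$-secant $2$-planes, codimension $6$ for the Schubert cycle of planes through $L$) are correct, and your local normal form for the pinch point is the right one.

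That said, two points you assert are precisely where all the work in the cited references lives, and as written they are gaps. First, ``the variety of $r$-secant $2$-planes has the expected dimension $9-r$ whenever $S$ is non-degenerate and not the Veronese'' is not a formal consequence of anything you wrote: one must rule out excess-dimensional families of $4$-secant planes, of planes tangent at two points, of planes containing a tangent line and a further secant, etc. These general-position lemmas are the substance of \cite{JR1}, they are where non-degeneracy and the exclusion of the Veronese actually get used, and a bare parameter count does not deliver them. Second, irreducibility of $N_2$ and $M_2$ does not follow from irreducibility of a total incidence variety over the space of centers: an irreducible family can perfectly well have reducible general fibers (the generic fiber of a connected double cover is the standard example), so one needs a monodromy or connectedness argument here, which your sketch omits. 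If you supply those two ingredients --- or cite them explicitly --- the rest of your argument (Mather's classification of stable germs $(\C^2,0)\to(\C^3,0)$ plus the multiplicity relations $f_*[M_r]=r[N_r]$ from \Cref{MultiplePoint}) does close the proof.
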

\begin{proof}
See \cite{Harris}, \cite{Tour} and \cite{JR1} for expositions of these classical facts. 
\end{proof}

The following definition is made for surfaces whose singularities resemble those occurring on the general linear projections of smooth surfaces in $\P^3$.

\begin{definition}
An integral hypersurface $X\subset \P^3$ is said to have ordinary singularities if the singular locus of $X$ is a curve $C$, such that if we put the reduced structure on $C$ then the following hold.
\begin{enumerate}
\item Singular points of $C$ are ordinary triple points (i.e. the origins of three linear branches of $C$ with three distinct non-coplanar tangent directions).
\item  
A point on $C$ is either a nodal point of $X$ (i.e. the origins of two linear branches of $X$ with two distinct tangent planes) or a pinch point of $X$ (i.e analytically isomorphic to $x^2-yz = 0$ at origin), and there are only finitely many pinch points on $X$.
\item Every triple point of $C$ is an ordinary triple point of $X$ (analytically isomorphic to $xyz = 0$ at origin).
\end{enumerate}
\end{definition}

An integral hypersurface $X \subset \P^3$ with ordinary singularities has a smooth normalization.
The normalization may not be embedded in a projective space of which $X$ is a linear projection, i.e. the pull back of the $\O_X(1)$ may not be very ample.
Such an example is given by the quartic surface with a double line considered by Gruson-Peskine \cite{GP} in order to determine all possible pairs of values of degree and genus for smooth space curves.

\bigskip

Since $S$ is smooth and $\P^3$ is Cohen-Macaulay, \Cref{Projection} implies that a general linear projection $f:S\to \P^3$ satisfies all the assumptions in \Cref{MultiplePoint}.

\begin{corollary}\label{Formulas}
Let notations be as above.
Denote by $h$ the class of $f^*\O_{\P^3}(1)$ in the Chow ring $A(S)$, and denote by $c_i$ the Chern classes of $\T_S$.
For two divisor classes $a,b\in A^1(S)$, let $a.b$ denote the intersection number.
The following hold.
\begin{enumerate}[label = (\alph*)]
\item The class of $M_2$ in $A^1(S)$ is $(h.h-4)h+c_1$. \label{M2}
\item The degree of the curve $N_2$ is $\frac{1}{2}((h.h)^2-4h.h+c_1.h)$. \label{dN2}
\item There are short exact sequences
\begin{align}
& 0 \to \O_X \to f_* \O_S \to \omega_{N_2}(4-h.h) \to 0, \tag{$A$} \label{E1}\\
& 0 \to \O_{N_2} \to f_* \O_{M_2} \to \omega_{N_2}(4-h.h) \tag{$B$}\label{E2} \to 0.
\end{align} \label{SES}
\item The arithmetic genus of $N_2$ is equal to
\[
\frac{1}{3}(h.h)^3- 3(h.h)^2+\frac{37}{6} (h.h) + \frac{1}{2} (h.h)(h.c_1)-2(h.c_1)+\frac{1}{12}(c_1.c_1+c_2.1) +1.
\] 
\label{gN2}
\item The class of $M_3$ in $A_0(S)$ is equal to 
\[
((h.h)^2-12h.h+c_1.h+44)h^2+(2h.h-24)c_1h+4c_1^2- 2c_2.
\] 
The number of triple points of $X$ is one third the degree of $[M_3]$. \label{M3}
\item The class of $R_1$ in $A_0(S)$ is equal to $6h^2-4hc_1+c_1^2-c_2$.
The number of pinch points of $X$ is the degree of $R_1$. \label{R1}
\end{enumerate}
\end{corollary}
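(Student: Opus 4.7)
The plan is to specialize the multiple-point formulas of Theorem~\ref{MultiplePoint} to the finite birational morphism $f\colon S\to\P^3$, using the singularity description of Theorem~\ref{Projection}, and to perform Chern-class computations. Write $d = h.h$ throughout. For (a), apply the iterative formula with $r=1$: since $f$ is birational onto the degree-$d$ hypersurface $X$, one has $f_*[S]=[X]=dH$ in $A^1(\P^3)$, while the Euler sequence on $\P^3$ gives $c_1(\nu) = 4h - c_1$; subtracting yields $[M_2] = dh - (4h - c_1) = (d-4)h + c_1$. For (b), intersect $f_*[M_2] = 2[N_2]$ with the hyperplane class $H$ and apply the projection formula: $2\deg N_2 = f_*[M_2]\cdot H = [M_2]\cdot h$.

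For (c), Grothendieck duality for the finite morphism $f$ gives $\sHom_{\O_X}(f_*\O_S,\omega_X) \cong f_*\omega_S$, so the conductor $\mathfrak{c}:=\sHom_{\O_X}(f_*\O_S,\O_X)$ is identified with $f_*\omega_{S/X}$; by Theorem~\ref{MultiplePoint}(ii), $\mathfrak{c} = \I_{N_2/X}$. Apply $\sHom_{\O_X}(-,\omega_X)$ to $0\to\mathfrak{c}\to\O_X\to\O_{N_2}\to 0$; adjunction $\omega_X\cong\O_X(d-4)$ together with $\sExt^1_{\O_X}(\O_{N_2},\omega_X)\cong\omega_{N_2}$ (valid since $N_2\subset X$ is Cohen--Macaulay of codimension one, by Theorem~\ref{MultiplePoint}(ii)) yields the first sequence. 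The second sequence follows either by repeating the argument for the finite morphism $f|_{M_2}\colon M_2\to N_2$, or by tensoring the first with $\O_{N_2}$ and chasing the resulting Tor long exact sequence. Part (d) is then Riemann--Roch: applying $\chi$ to the first sequence gives $\chi(\omega_{N_2}(4-d)) = \chi(\O_S) - \chi(\O_X)$, and combining Serre duality on $N_2$ with Noether's formula $\chi(\O_S) = \tfrac{1}{12}(c_1^2+c_2)$, the hypersurface expression $\chi(\O_X) = 1 + \binom{d-1}{3}$, and the value of $\deg N_2$ from (b), expansion gives $g(N_2)$.

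Part (f) is an application of Porteous's formula to the differential $df\colon f^*\Omega^1_{\P^3}\to\Omega^1_S$, a map of bundles of ranks $3$ and $2$: the rank-$\le 1$ degeneracy locus has class $[R_1] = c_1(\Omega_S-f^*\Omega^1_{\P^3})^2 - c_2(\Omega_S-f^*\Omega^1_{\P^3})$, which simplifies via the Euler sequence on $\P^3$ to $6h^2 - 4c_1 h + c_1^2 - c_2$; by Theorem~\ref{Projection}, each pinch point of $X$ is the image of exactly one point of $R_1$, so the pinch-point count equals $\deg[R_1]$. Part (e) is the main obstacle: applying the iterative formula with $r=2$ produces $-2c_1(\nu)[M_2]$ directly from (a), and a Chow-theoretic evaluation gives $f^*f_*[M_2] = 2f^*[N_2] = (d^2-4d+c_1.h)\,h^2$ via (b), but this naive sum falls short of the stated expression by precisely $2c_2(\nu) = 2[R_1]$. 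This additional term is the excess-intersection contribution at the pinch points that is built into the Fitting-ideal scheme structure on $M_3$ used by Kleiman--Lipman--Ulrich; incorporating it yields the formula, and the triple-point count is $\tfrac{1}{3}\deg[M_3]$ via $f_*[M_3] = 3[N_3]$.
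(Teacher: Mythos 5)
Your proposal is correct and follows essentially the same route as the paper: double-point formula for (a), push--pull for (b), duality applied to the conductor sequence for (c), Euler characteristics for (d), Porteous for (f), and the triple-point formula for (e). Two small divergences are worth recording. For (d) you apply $\chi$ to the untwisted sequence $(A)$ and then shift $\chi(\omega_{N_2}(4-h.h))$ to $\chi(\omega_{N_2})$ using $\deg N_2$ from (b), whereas the paper twists by $\O_X(h.h-4)$ first and evaluates $\chi(\O_S((h.h-4)h))$ by Riemann--Roch on $S$; both expansions give the stated polynomial, and your version is marginally cleaner since it isolates $\chi(\O_S)-\chi(\O_X)$. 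For (e) you have correctly spotted that the iteration formula as displayed in \Cref{MultiplePoint}, $[M_{r+1}] = f^*f_*[M_r]-rc_1(\nu)[M_r]$, does \emph{not} by itself yield the stated class of $M_3$: the correct Kleiman--Lipman--Ulrich triple-point formula is $[M_3] = f^*f_*[M_2]-2c_1(\nu)[M_2]+2c_2(\nu)$, and the paper's own proof silently uses this fuller version. Your identification of the missing term as $2c_2(\nu)$ (which here equals $2[R_1]$) is exactly right, though attributing it to ``excess intersection at the pinch points'' is an informal gloss; the honest justification is simply to cite the triple-point formula of \cite{Kleiman1}, \cite{KLU} in its complete form, which includes the higher Chern classes of the virtual normal bundle for $r\ge 2$. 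Aside from that citation issue, which the paper shares, there is no gap.
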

\begin{proof}
\noindent\ref{M2} $[M_2] = f^*f_*[M_1]-c_1(\nu) = (h.h-4)h+c_1$.

\noindent\ref{dN2} By push-pull formula, we have $M_2.h = 2\deg N_2 = (h.h-4)h.h+c_1.h$.

\noindent\ref{SES} The $\O_X$-dual of the exact sequence 
\[
0 \to \I_{N_2/X} \to \O_X \to \O_{N_2} \to 0
\]
gives an exact sequence
\[
0 \to \O_X \to \sHom_{\O_X}(\I_{N_2/X},\O_X) \to \sExt^1_{\O_X}(\O_{N_2},\O_X) \to 0.
\]
Note that $\I_{N_2/X} = \sHom_{\O_X}(f_* \O_S,\O_X)$, and that $f_*\O_S$ is a reflexive $\O_X$-module since it satisfies (S2) and $X$ is Gorenstein.
It follows that we have isomorphisms 
\[
\sHom_{\O_X}(\I_{N_2/X},\O_X) = \sHom_{\O_X}(\sHom_{\O_X}(f_* \O_S,\O_X),\O_X) = f_*\O_S.
\]
Since $N_2$ is Cohen-Macaulay, the third term in the exact sequence is isomorphic to 
\[
\sExt^1_{\O_X}(\O_{N_2},\O_X)\cong \omega_{N_2}\otimes  \omega_X^{-1} \cong \omega_{N_2}(4-h.h).
\]
We obtain the first exact sequence. 
Since $f_*\I_{M_2/S} = \I_{N_2/X}$, and $R^if_* = 0$ for $i>0$ because $f$ is affine, the snake lemma gives the second exact sequence. 
These two sequences are due to Roberts \cite{JR}.

\noindent\ref{gN2} This is a computation derived from sequence (\ref{E1}).
\begin{align*}
g(N_2) & = 1-\chi(\O_{N_2}) \\
& = 1+\chi(\omega_{N_2})\\
& = 1+\chi((f_*\O_S)(h.h-4))-\chi(\O_{N_1}(h.h-4))\\
& = 1+ \chi(O_S((h.h-4)h)) - \chi(\O_{\P^3}(h.h-4)) +\chi(\O_{\P^3}(-4))\\
& = 1+\frac{1}{2}(h.h-4)h.((h.h-4)h+c_1)+\frac{1}{12}(c_1.c_1+c_2.1) - \frac{1}{6}(h.h-1)(h.h-2)(h.h-3)-1\\
& = \frac{1}{3}(h.h)^3- 3(h.h)^2+\frac{37}{6} (h.h) + \frac{1}{2} (h.h)(h.c_1)-2(h.c_1)+\frac{1}{12}(c_1.c_1+c_2.1) +1
\end{align*}
In the second line, we used the fact that $N_2$ is Cohen-Macaulay and Serre duality holds. In the third line, we used the exact sequence (\ref{E1}). In the fourth line, we use the projection formula to conclude that 
\[
h^i((f_*\O_S)(h.h-4)) = h^i(f_*(\O_S((h.h-4)h))) = h^i(\O_S((h.h-4)h)
\]
since $\pi$ is affine.
Since $N_1$ is a hypersurface of degree $h.h$, there is an exact sequence
\[
0 \to \O_{\P^3}(-4) \to \O_{\P^3}(h.h-4) \to \O_{N_1}(h.h-4) \to 0.
\]
On the next line, we applied Hirzebruch-Riemann-Roch on the smooth surface $S$ to the line bundle $(h.h-4)h$ and used the fact that $\chi(\O_{\P^3}(d)) = \frac{1}{6}(d+3)(d+2)(d+1)$. 
It is somewhat curious that this expression always ends up being an integer.

\noindent\ref{M3} The triple-point formula yields
\begin{align*}
[M_3] & = f^*f_*[M_2] - 2c_1(\nu)[M_2]+2c_2(\nu) \\
& =((h.h)^2-12h.h+c_1.h+44)h^2+(2h.h-24)c_1h+4c_1^2- 2c_2.
\end{align*}
The number of triple points is the degree of $[N_3]$, which is one third the degree of $[M_3]$ since $f_*[M_3] = 3[N_3]$.

\noindent\ref{R1}  Applying Porteous formula to the transpose of the map $\partial f: f^*\Omega_{\P^3/\C} \to \Omega_{S/\C}$ yields the result.
Since ramification points map bijectively to pinch points, the number of pinch points is given by the degree of $[R_1]$.
\end{proof}

\subsection{Generalized divisors}\

In this subsection, we fix $\pi:S\to X$ a finite birational morphism from a smooth projective surface to an integral singular hypersurface in $\P^3$.
Let $h$ denote the class of $\pi^* \O_X(1)$ in $A^1(S)$.
We briefly review the definitions and basic operations of generalized divisors.

\begin{definition}
Generalized divisors on a Gorenstein scheme $X$ are reflexive fractional ideals of rank one. In particular, curves on Gorenstein surfaces are exactly the effective generalized divisors. An almost Cartier divisor is a generalized divisor that is locally principal away from a closed subset of codimension at least two. Let $\Cart$, $\ACart$ and $\GDiv$ denote the group of Cartier divisors, the group of almost Cartier divisors and the set of generalized divisors. Let $\Pic$, $\APic$ and $\GPic$ be the corresponding isomorphism classes.
\end{definition}

For example, any curve not supported on any components of the singular locus of a Gorenstein surface in dimension zero is an almost Cartier divisor, but may not be Cartier.

\bigskip

There is a pullback map $\pi^*:\ACart(X) \to \ACart(S) = \Cart(S)$ defined by sending a reflexive fractional ideal $\I$ of rank one to $((\O_S\cdot \I)^{-1})^{-1}$. Here $\J^{-1}$ is defined to be $(\O_S:_{\K_S} \J)$ for a fractional ideal $\J$, and is isomorphic to the dual $\J^*$. One can check that $\pi^*$ is a morphism of groups and descends to $\pi^*:\APic(X)\to \Pic(S)$.  

\begin{theorem}[Hartshorne-Polini{\cite[Thm 4.1]{HP}}]\
\begin{enumerate}
\item There is a morphism of groups $\varphi:\APic(X)\to \Cart M_2/ \pi^* \Cart N_2$.
\item There is an exact sequence of groups
\[
0 \to \APic(X) \to \Pic(S)\oplus \Cart M_2/\pi^* \Cart N_2 \to \Pic M_2/\pi^* \Pic N_2 \to 0.
\]
The first map of the short exact sequence is given by $\pi^* \oplus \varphi$ and the second map is given by the difference of the two maps 
\[
\Pic(S)\to \Pic M_2/\pi^* \Pic N_2 \text{ and }\Cart M_2/\pi^* \Cart N_2 \to \Pic M_2/\pi^* \Pic N_2.
\]
\end{enumerate}
\end{theorem}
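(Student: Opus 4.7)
The plan is to realize the sequence as a Mayer--Vietoris exact sequence associated to the conductor square. The key geometric input is that the conductor ideal of $\pi$ cuts out $N_2$ in $X$ and its pullback cuts out $M_2$ in $S$, so that outside the finite ``bad'' locus (the triple points and pinch points of $X$) the commutative square with sides $M_2\hookrightarrow S$, $N_2\hookrightarrow X$, $\pi|_{M_2}\colon M_2\to N_2$ and $\pi\colon S\to X$ is both Cartesian and co-Cartesian, realising $\O_X$ as the pullback ring $\O_S\times_{\O_{M_2}}\O_{N_2}$. From this Milnor square one extracts a short exact sequence of unit sheaves on $X$, from which the Cartier analogue of the desired sequence follows formally; the theorem is the refinement needed for \emph{almost} Cartier divisors, accounting for the behaviour in codimension two.

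To construct $\varphi$, take $D\in\ACart(X)$ represented by a reflexive rank-one fractional ideal $\I_D$ that is locally principal outside a finite set. Choose an affine cover $\{U_i\}$ of $X$ and local generators $f_i\in\K^*_{U_i}$ of $\I_D|_{U_i}$. The pullbacks $\pi^*f_i$ glue on $S$ (which is smooth) to a Cartier divisor whose class $\pi^*D\in\Pic(S)$ is the first component of $\pi^*\oplus\varphi$. The restrictions of $\pi^*f_i$ to $M_2$ are well-defined at the generic points of $M_2$, which lie over generic points of $N_2$ where $D$ is locally principal, so they define a Cartier divisor $D_M\in\Cart(M_2)$. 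Replacing $f_i$ by $u_if_i$ with $u_i\in\O^*_{X,U_i}$ changes $D_M$ by a Cartier divisor pulled back from $N_2$, so the class $\varphi(D)=[D_M]$ in $\Cart(M_2)/\pi^*\Cart(N_2)$ is well-defined and the resulting $\pi^*\oplus\varphi$ is a group homomorphism.

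Exactness breaks into three checks. For injectivity of $\pi^*\oplus\varphi$: if $\pi^*D=\O_S$ and $\varphi(D)$ is trivial in the quotient, adjusting the local generators $f_i$ by pullbacks from $N_2$ produces a compatible system on overlaps that patches to a global generator of $\I_D$, forcing $D=0$ in $\APic(X)$. For exactness in the middle: a pair $(L,[D_M])$ lies in the kernel of the difference map precisely when $L|_{M_2}$ and $D_M$ agree modulo $\pi^*\Pic(N_2)$, which by the Milnor-square pushout property lets one reconstruct a reflexive rank-one fractional ideal on $X$ with the prescribed pullbacks. Surjectivity of the right-hand map is immediate since every line bundle on $M_2$ is Cartier and contributes via the pair $(0,[D_M])$.

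The principal technical obstacle is that the conductor square only behaves as a pushout in codimension one on $X$, failing at the triple points (analytically $xyz=0$) and at the pinch points (analytically $x^2-yz=0$). At these codim-2 points the naive Mayer--Vietoris argument breaks, and the reconstruction of an ideal on $X$ from compatible data on $S$ and $N_2$ need not give a line bundle, only a rank-one reflexive sheaf. This is exactly why the sequence involves $\APic(X)$ rather than $\Pic(X)$: the defining flexibility of almost Cartier divisors (locally principal off a codim-2 set), combined with reflexivity and the smoothness of $S$, lets the local constructions be extended uniquely across the bad locus by a Hartogs-type argument, and this is the step where one must verify that the patched fractional ideal is genuinely reflexive rather than just coherent.
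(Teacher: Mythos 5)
This statement is not proved in the paper at all: it is quoted from Hartshorne--Polini \cite[Thm 4.1]{HP}, and the only fragment of argument the paper supplies is the construction of $\varphi$, which proceeds by first invoking the moving lemma \cite[Prop 2.11]{H1} to write $[D]=[C_1]-[C_2]$ with $C_1,C_2$ almost Cartier and not supported on any component of $N_2$, and then restricting the strict transforms $\tilde{C_i}\cap M_2$ to $M_2$; well-definedness is itself a cited nontrivial point (\cite[Prop 2.3]{HP}). Your construction of $\varphi$ skips the moving lemma, and that is where it has a genuine gap: if $D$ has a component supported on a component of $N_2$, the local generators $f_i$ of $\I_D$ vanish identically (or have poles) along that component, so $\pi^\# f_i$ restricts to something that is not a unit in the total quotient ring of $M_2$ and hence does not define a Cartier divisor on $M_2$. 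Your parenthetical justification confuses ``$\I_D$ is locally principal at the generic points of $N_2$'' with ``the local generator is a unit along $N_2$''; the former is automatic for an almost Cartier divisor, the latter is exactly what can fail and what the moving lemma is there to repair.

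Beyond that, your Milnor-square outline is the right global skeleton, though two points need adjustment. First, the conductor square is Cartesian and co-Cartesian everywhere, not merely off the triple and pinch points (given $s\in\O_S$ and $n\in\O_{N_2}$ agreeing on $M_2$, any lift $\tilde n\in\O_X$ of $n$ satisfies $s-\tilde n\in\I_{N_2/X}\cdot\O_S=\I_{N_2/X}\subset\O_X$); what fails at the bad points is not the square but the identification of the resulting descent data with line bundles. Second, and more seriously, the units Mayer--Vietoris sequence computes $\Pic(X)$ in terms of $\Pic(N_2)$ and $\Pic(M_2)$, whereas the theorem is about $\APic(X)$ and about \emph{Cartier divisor} groups $\Cart M_2/\pi^*\Cart N_2$; the entire content of the statement is this refinement, which in \cite{HP} rests on an explicit local computation showing $\APic(\O_{X,x})\cong \Cart M_{2,x}/\pi^*\Cart N_{2,x}$ at each pinch point, node and triple point (\cite[Thm 3.1]{HP}, which the present paper also uses in the proof of \Cref{linkage}). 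Your final paragraph correctly locates this as the technical obstacle but offers only a ``Hartogs-type argument'' in its place; reflexive extension across codimension two gives uniqueness of the patched sheaf, not the needed isomorphism of local divisor class groups, so the proposal identifies the difficulty without resolving it.
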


The map $\varphi:\APic(X)\to \Cart M_2/\pi^* \Cart N_2$ is important, so we review how it is defined. Let $D$ be an almost Cartier divisor, then $[D] =[C_1]-[C_2]$ for two almost Cartier divisors $C_1$ and $C_2$ not supported on any components of $N_2$ by \cite[Prop 2.11]{H1}. Then $\tilde{C_1}$ and $\tilde{C_2}$ are two Cartier divisors on $S$ intersecting $M_2$ properly, and thus restricts to Cartier divisors $\alpha_1 = \tilde{C_1}\cap M_2 |_{M_2}$ and $\alpha_2 = \tilde{C_2}\cap M_2 |_{M_2}$ on $M_2$. We define $\varphi [D]$ to be the image of $\alpha_1-\alpha_2$ in $\Cart M_2/N_2$. 
This map is well-defined by \cite[Prop 2.3]{HP}.

\bigskip

\section{Curves on hypersurfaces with ordinary surface singularities}\label{Section1}

In this section, we study the liaison theory of curves on $X$ utilizing its normalization $S$.
We pay particular attention to the homological invariants of these curves $C$, such as the specialty function $h^1(\O_C(n))$, the Hilbert function $h^0(\I_{C/\P^3}(n))$ and the Rao function $h^1(\I_{C/\P^3}(n))$. 

\subsection{Linkage on a hypersurface with ordinary singularities}\

\begin{definition}[Linkage]
Two curves $C$ and $D$ on $X$ are linked if $D$ and $\O_X(m)-C$ are linearly equivalent as generalized divisors on $X$ for some positive integer $m$. 
\end{definition}

This generalizes the classical notion of linkage in the classical sense.
See \cite[\S 4]{H1} for a treatment of linkage theory in terms of generalized divisors. 

\begin{proposition}
The map $\pi:S\to X$ is the blowup of $X$ with center $N_2$. 
\end{proposition}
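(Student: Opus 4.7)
The plan is to invoke the universal property of the blowup and verify its hypothesis by a local analytic computation. By \Cref{MultiplePoint}(ii) the ideal sheaf $\I_{N_2/X}$ coincides with the conductor $\Ann_{\O_X}(\pi_*\O_S/\O_X)$, so I first show that $\I_{N_2/X}\cdot\O_S$ is invertible on $S$. This is local on $X$ and trivial away from $N_2$. At a node, modelled analytically as $xy=0$, the normalization splits $X$ into two smooth sheets on which $(x,y)$ restricts respectively to the principal ideals $(y)$ and $(x)$. At a pinch point, modelled as the Whitney umbrella $x^2=y^2z$, the normalization is given by $(u,v)\mapsto(uv,v,u^2)$, under which $(x,y)=(uv,v)=(v)$ is principal. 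At a triple point $xyz=0$, the normalization is three smooth sheets, and on each sheet two of the three local generators of $\I_{N_2/X}$ vanish while the third generates the pullback.

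By the universal property of blowing up we then obtain a unique $X$-morphism $\phi:S\to\Bl_{N_2}X$. Since $\pi$ is finite and $\Bl_{N_2}X$ is separated over $X$, the fibers of $\phi$ are finite; since $\phi$ is also proper, it is finite. Both $\pi$ and $\Bl_{N_2}X\to X$ restrict to isomorphisms over the smooth locus $X\setminus N_2$, so $\phi$ is a finite birational morphism.

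To conclude that $\phi$ is a global isomorphism, I would extend the local computations to compute $\Bl_{N_2}X$ itself at each type of singularity. At a node the blowup disengages the two sheets into a disjoint union of smooth planes. At a pinch point, the strict transform of $x^2=y^2z$ in the chart $[x:y]=[s:1]$ is cut out by $s^2-z=0$, which is smooth and matches the local structure of $S$ under the parametrization above. At a triple point, the blowup similarly produces three disjoint smooth sheets. In each case $\Bl_{N_2}X$ is smooth, hence normal, and Zariski's Main Theorem then implies that the finite birational morphism $\phi$ between normal schemes is an isomorphism. The main technical obstacle is the pinch-point case, where the normalization is not a branch separation and one must work with the explicit Whitney umbrella parametrization to identify the strict transform with a chart of $S$.
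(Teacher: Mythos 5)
Your argument is correct, but it takes a genuinely different and much longer route than the paper's. The paper's proof is a one-line global identity: since $\I_{N_2/X}$ is the conductor, it is simultaneously an ideal of $\O_X$ and of $\pi_*\O_S$, with $\pi_*\I_{M_2/S}=\I_{N_2/X}$; hence $\bigoplus_i\I_{N_2/X}^i$ and $\bigoplus_i\I_{M_2/S}^i$ are the same graded sheaf of rings, so $\Bl_{N_2}X=\Proj\bigoplus_i\I_{N_2/X}^i\cong\Bl_{M_2}S$, and the latter is $S$ because $M_2$ is an effective Cartier divisor on the smooth surface $S$. That argument needs no classification of the singularities and no chart computations, and in particular works for any finite birational $\pi$ from a smooth surface onto a hypersurface, not only one with ordinary singularities. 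What your approach buys is explicitness: one sees the exceptional behaviour at each singularity type and verifies directly that the blowup is smooth. Two cautions for your version: (i) the invertibility of $\I_{N_2/X}\cdot\O_S=\I_{M_2/S}$ is already free from \Cref{MultiplePoint}, since $M_2$ is Cohen--Macaulay of pure codimension one in the smooth $S$, hence Cartier, so the local analytic check is not needed for that step; (ii) in the triple-point case the word ``similarly'' hides the one point that actually needs checking, namely that the three strict transforms are disjoint in $\Bl_{(xy,yz,zx)}\C^3$ --- they are, because over the origin they pass through the three distinct points $[1:0:0]$, $[0:1:0]$, $[0:0:1]$ of the exceptional fibre --- and throughout you must confirm that the ambient ideal you blow up restricts on $X$ to the conductor ideal $\I_{N_2/X}$ (true at nodes and triple points, where $N_2$ is reduced, and at the pinch point, where the conductor is $(x,y)\O_X$), since otherwise the strict transform you compute is not $\Bl_{N_2}X$.
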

\begin{proof}
Recall that $\I_{N_2/X}$ is the conductor of the normalization and thus $\pi_* \I_{M_2/S} = \I_{N_2/X}$. There is an isomorphism of schemes: 
\[
\Bl_{N_2} X = \Proj \bigoplus_{i = 0}^\infty \I_{N_2/X}^i \cong \Proj \bigoplus_{i = 0}^\infty \I_{M_2/S}^i= \Bl_{M_2} S.
\]
But $\Bl_{M_2} S = S$ since $M_2$ is a divisor.
\end{proof}

If $C$ is a curve on $X$ not supported on any components of $N_2$, then we reserve the notation $\tilde{C}$ for the proper transform of $C$ on $S$. 

\begin{proposition}
If $C$ is a curve on $X$ not supported on any components of $N_2$, then $\pi^* C$ and $\tilde{C}$ are the same Cartier divisor on $S$. 
\end{proposition}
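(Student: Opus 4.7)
The plan is to show that $\pi^{*}C$ and $\tilde{C}$ agree because they agree generically on $S$ and neither has a component supported on the exceptional divisor $M_2$. Since $S$ is smooth, any Weil divisor on $S$ is Cartier, so two effective divisors coincide as soon as they have the same support with the same multiplicities at every prime divisor.

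First I would observe that on the open set $U := S \setminus M_2$, the map $\pi$ restricts to an isomorphism onto $X \setminus N_2$, which is exactly the smooth locus of $X$. Over $U$, the curve $C|_{X\setminus N_2}$ is an honest Cartier divisor cut out locally by a single equation $f$, and by definition both $\pi^{*}C$ and $\tilde{C}$ are cut out by $f \circ \pi$. Hence $\pi^{*}C|_{U} = \tilde{C}|_{U}$, so the two Cartier divisors on $S$ can differ only by an integer combination of the (finitely many) prime components of $M_2$.

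Next I would argue that neither divisor has any component of $M_2$ in its support. For $\tilde{C}$ this is the definition of the proper transform: $\tilde{C}$ is the scheme-theoretic closure in $S$ of $\pi^{-1}(C \setminus N_2)$, which does not contain any generic point of $M_2$. For $\pi^{*} C = \bigl((\O_{S}\cdot \I_{C})^{-1}\bigr)^{-1}$, let $\eta$ be the generic point of an arbitrary component of $M_2$. By \Cref{Projection}, $\pi$ maps this component dominantly onto a component of $N_2$, so $\pi(\eta)$ is the generic point of a component of $N_2$. By hypothesis $C$ does not contain this component, so $\pi(\eta) \notin \op{supp}(C)$ and $(\I_{C})_{\pi(\eta)} = \O_{X,\pi(\eta)}$. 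Consequently $(\O_{S}\cdot\I_{C})_{\eta} = \O_{S,\eta}$, and since reflexivization commutes with localization at codimension-one points of the smooth surface $S$, also $(\pi^{*}C)_{\eta} = \O_{S,\eta}$. Thus $\pi^{*}C$ vanishes to order zero along every component of $M_2$.

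Combining the two points, $\pi^{*}C - \tilde{C}$ is a Cartier divisor on $S$ supported on $M_2$ whose multiplicity along every component of $M_2$ is zero, hence it is the zero divisor. I do not expect any genuine obstacle here: the only subtle point is confirming that the reflexive-hull definition of $\pi^{*}$ does nothing extra at codimension-one points where $\I_{C}$ is already the unit ideal, which is essentially tautological on the smooth surface $S$.
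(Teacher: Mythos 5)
Your proof is correct, but it takes a genuinely different route from the paper's. The paper works on the open set $V=S\setminus\pi^{-1}(C\cap N_2)$: since $C$ meets $N_2$ in dimension zero and $\pi$ is finite, the complement of $V$ is a finite set of points; on $V$ the two divisors visibly agree (near $\pi^{-1}(C)\cap V$ the map $\pi$ is an isomorphism, and at the remaining points of $V$ both divisors are trivial), and the uniqueness of extension of reflexive sheaves across codimension-two subsets of $S$ (\cite[Prop.\ 1.11]{H1}) finishes the argument in one stroke. You instead work on $S\setminus M_2$, whose complement is a divisor, so agreement there only pins down $\pi^*C-\tilde{C}$ up to an integral combination of components of $M_2$, and you must additionally compute the multiplicity of each divisor along every such component: zero for $\tilde{C}$ by definition of the proper transform, and zero for $\pi^*C$ by localizing $((\O_S\cdot\I_C)^{-1})^{-1}$ at the generic point of a component of $M_2$, where the double dual is indeed harmless because $S$ is smooth, hence normal. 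Both arguments are sound; the paper's choice of the larger open set lets the reflexive-extension principle absorb all the work, while yours is more elementary (only the fact that a divisor on a smooth surface is determined by its multiplicities along prime divisors) at the price of the explicit stalk computation. One small citation point: the fact that each component of $M_2$ dominates a component of $N_2$ is \Cref{MultiplePoint}(ii), or simply finiteness of $\pi$ applied to $M_2=\pi^{-1}(N_2)$, rather than \Cref{Projection}.
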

\begin{proof}
Recall that any reflexive sheaf over $S$ extends uniquely from an open set with a codimension 2 complement \cite[Prop 1.11]{H1}. Since $\pi^{-1}(C\cap N_2)$ is codimension two in $S$ and $\tilde{C}$ is equal to $\pi^*C$ on the open set $S-\pi^{-1}(C\cap N_2)$, it follows that $\tilde{C}$ is equal to $\pi^* C$ on the whole of $S$.
\end{proof}

\begin{corollary}\label{degree}
If $D$ is an effective almost Cartier divisor on $X$, then $\pi_*\pi^*[D] = [D]$ as Chow classes. In particular $(\pi^*[D]).h = \deg D$. 
\end{corollary}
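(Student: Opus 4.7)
The plan is to reduce the claim to curves no component of which is supported on $N_2$, and then apply the two preceding propositions. By \cite[Prop 2.11]{H1}, invoked elsewhere in this section, every almost Cartier divisor class on $X$ can be represented as a difference $[C_1]-[C_2]$, where $C_1,C_2$ are effective almost Cartier divisors whose supports contain no component of $N_2$. Since both $\pi^*$ and $\pi_*$ are linear on cycle classes, it suffices to prove $\pi_*\pi^*[C]=[C]$ in $A_1(X)$ for such a curve $C$.

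For each such $C$, the immediately preceding proposition identifies the Cartier pullback $\pi^*C$ with the proper transform $\tilde C$ on $S$. Since $\pi\colon S\to X$ is birational and no component of $C$ lies in $N_2$, no component of $\tilde C$ is contracted by $\pi$; hence each component of $\tilde C$ maps birationally, with multiplicity one, onto the corresponding component of $C$. This yields $\pi_*[\tilde C]=[C]$ as $1$-cycles on $X$, and taking the difference of two such equalities gives $\pi_*\pi^*[D]=[D]$ for the original $D$.

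For the second statement, I would apply the projection formula to the Cartier class $H=c_1(\O_X(1))$, which pulls back to $h$:
\[
\pi_*\bigl(\pi^*[D]\cdot h\bigr) \;=\; \pi_*\bigl(\pi^*[D]\cdot \pi^*H\bigr) \;=\; \bigl(\pi_*\pi^*[D]\bigr)\cdot H \;=\; [D]\cdot H \;=\; \deg D.
\]
Because $\pi$ is finite, its pushforward preserves the degrees of $0$-cycles, so the leftmost cycle has degree equal to the intersection number $\pi^*[D]\cdot h$, giving $\pi^*[D]\cdot h=\deg D$. The only step requiring care is the initial reduction to curves avoiding components of $N_2$, which is exactly what \cite[Prop 2.11]{H1} provides; the rest is formal linearity combined with the identification $\pi^*C=\tilde C$ and the projection formula, so no serious obstacle remains.
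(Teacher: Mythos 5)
Your proposal is correct and follows essentially the same route as the paper: reduce via \cite[Prop 2.11]{H1} to effective almost Cartier divisors not supported on components of $N_2$, use the identification $\pi^*C=\tilde C$ and the fact that $\tilde C\to C$ has degree one on each component to get $\pi_*\pi^*[C]=[C]$, and conclude by linearity and the projection formula. The paper's proof is just a more compressed version of the same argument.
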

\begin{proof}
Since $D$ is almost Cartier, by \cite[Prop 2.11]{H1} $D$ is linearly equivalent to $C_1-C_2$, where $C_1,C_2$ are effective almost Cartier divisors not supported on any components of $N_2$. Then $[D] = [C_1]-[C_2]$ as Chow classes. Now $\pi_*\pi^*[C_i] =\pi_*[\tilde{C_i}] = [C_i]$ since $\tilde{C_i} \to C_i$ is degree one on every component. It follows that $\pi_*\pi^*[D] = [D]$, and the push-pull formula yields the last claim. 
\end{proof}

If $N_2$ were almost Cartier on $X$, then $\pi_*\pi^*[N_2] = [N_2]$ by the above proposition.
However, since $\O_S\cdot \I_{N_2/X} = \I_{M_2/S}$, we must have $\pi^*[N_2] = [M_2]$. This contradicts the fact that $f_*[M_2] = 2[N_2]$.
Therefore the generalized divisor $N_2$ is not almost Cartier on $X$. 
Consequently neither is $N_2+C$ for any almost Cartier divisor $C$. 

\begin{lemma}\label{cartier}
If $D$ is a Cartier divisor on $X$ not supported on any components of $N_2$, then 
$\pi^*(D\cap N_2) = (\pi^*D)\cap M_2 =  \tilde{D}\cap M_2$
as Cartier divisors of $M_2$.  
\end{lemma}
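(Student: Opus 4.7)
My approach is to reduce the claim to a purely local calculation on $X$. The rightmost equality $(\pi^*D)\cap M_2 = \tilde D\cap M_2$ is immediate from the preceding proposition, since any Cartier divisor is in particular almost Cartier, so $\pi^*D$ and $\tilde D$ agree as Cartier divisors on $S$.

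For the first equality I would work on an affine open $U\subset X$ on which $D$ is cut out by a single non-zero-divisor $f\in\O_X(U)$ (writing $D$ as a difference of effective Cartier divisors locally and using $\Z$-linearity of all three operations in $D$ to reduce to this case). Because $D$ contains no component of $N_2$, the image $\bar f$ in $\O_{N_2}(U\cap N_2)$ is again a non-zero-divisor and cuts out $D\cap N_2$; pulling back through $(\pi|_{M_2})^\sharp$ then defines $\pi^*(D\cap N_2)$ on $\pi^{-1}(U)\cap M_2$ via the image of $\bar f$. On the other hand, $(\pi^*D)\cap M_2$ is cut out on the same open by the image in $\O_{M_2}$ of $\pi^\sharp(f)$. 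These two defining elements coincide by commutativity of
\[
\xymatrix{
\O_X(U) \ar[r]^{\pi^\sharp} \ar[d] & \O_S(\pi^{-1}U) \ar[d] \\
\O_{N_2}(U\cap N_2) \ar[r] & \O_{M_2}(\pi^{-1}U\cap M_2),
}
\]
in which the vertical maps are restriction to the appropriate closed subscheme and the bottom horizontal map is induced by $\pi|_{M_2}:M_2\to N_2$.

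The one point that requires attention is verifying that the common defining element is a non-zero-divisor in $\O_{M_2}$, so that it genuinely defines a Cartier divisor there. I would deduce this from the fact that $\tilde D$ contains no component of $M_2$: by \Cref{MultiplePoint}(ii) each component of $M_2$ maps onto a component of $N_2$, so if $\tilde D$ contained such a component then $D = \pi_*\tilde D$ would contain its image in $N_2$, contradicting the hypothesis. I expect this non-zero-divisor bookkeeping (and the minor care needed to compare the set-theoretic preimage $\pi^{-1}(U)\cap M_2$ with the Fitting-ideal scheme structure on $M_2$) to be the main, if minor, obstacle; the equality itself is essentially formal once these subtleties are handled.
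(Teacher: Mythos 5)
Your proposal is correct and follows essentially the same route as the paper: both proofs work with local defining data $(U_i,f_i)$ for $D$, observe that the pullbacks $\pi^\# f_i$ simultaneously define $\pi^*(D\cap N_2)$ and $(\pi^*D)\cap M_2$ on $\pi^{-1}(U_i)\cap M_2$, and check the relevant non-zero-divisor conditions from the hypothesis that $D$ contains no component of $N_2$. Your justification that $\tilde D$ contains no component of $M_2$ (via components of $M_2$ surjecting onto components of $N_2$) is a slightly more explicit version of the paper's remark that $\pi^*D$ is not supported on $M_2$, but the argument is the same.
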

\begin{proof}
Suppose $D$ is defined locally by $(U_i,f_i)$.
Since $D$ is not supported on $N_2$, the local sections $f_i$ restrict to non-zerodivisors in $H^0(U_i\cap N_2,\K_{N_2})$. 
The corresponding local sections $(\pi^{-1}(U_i),\pi^\# f_i)$ of $\K_S$ define a Cartier divisor $\pi^*D$ not supported on $M_2$, and thus restrict to non-zerodivisors in $H^0(\pi^{-1}(U_i)\cap M_2, \K_{M_2})$. The three Cartier divisors on $M_2$ are equal because they are all defined by the data $(\pi^{-1}(U_i)\cap M_2,\pi^\# f_i)$.
\end{proof}

\begin{proposition}\label{linkage}
Let $C$ and $D$ be two curves on $X$ not supported on any components of $N_2$, then $C$ and $D$ are linked by $\O_X(m)$ if and only if 
\begin{enumerate}
\item $[\tilde{C}]+[\tilde{D}] = mh$, where $h$ is the class of $\pi^*\O_X(1)$;
\item $\tilde{C}\cap M_2 + \tilde{D} \cap M_2$ is a Cartier divisor of $M_2$ in $\pi^* \Cart N_2$.
\end{enumerate}
\end{proposition}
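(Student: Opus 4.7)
The plan is to deduce the criterion from the Hartshorne--Polini exact sequence, using the machinery assembled in the earlier part of the subsection.

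First, observe that by definition $C$ and $D$ are linked by $\O_X(m)$ precisely when $C+D \sim \O_X(m)$ as generalized divisors on $X$. Since $C$ and $D$ are not supported on any component of $N_2$, they are almost Cartier, and so this equality takes place in $\APic(X)$. I will then apply the injective map
\[
\APic(X)\hookrightarrow \Pic(S)\oplus \bigl(\Cart M_2/\pi^*\Cart N_2\bigr),\qquad [E]\mapsto \bigl(\pi^*[E],\varphi[E]\bigr),
\]
from the Hartshorne--Polini theorem. Thus $C$ and $D$ are linked by $\O_X(m)$ if and only if the two images agree on each factor.

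For the $\Pic(S)$ factor, the preceding proposition identifies $\pi^*[C]$ with $[\tilde C]$ and $\pi^*[D]$ with $[\tilde D]$, while clearly $\pi^*[\O_X(m)]=mh$. This yields condition (1). For the $\Cart M_2/\pi^*\Cart N_2$ factor, the definition of $\varphi$ (together with the fact that $C,D$ are already effective almost Cartier divisors not meeting $N_2$ in a component) gives $\varphi[C+D]=[\tilde C\cap M_2+\tilde D\cap M_2]$. It remains to check that $\varphi[\O_X(m)]=0$. For this, choose a general hyperplane section $H$ of $X$; then $H$ is not supported on $N_2$, so $\varphi[\O_X(1)]$ is represented by $\tilde H\cap M_2$. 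By \Cref{cartier} applied to $D=H$, we have $\tilde H\cap M_2=\pi^*(H\cap N_2)$, which lies in $\pi^*\Cart N_2$ and is therefore zero in the quotient. Since $\varphi$ is a group homomorphism, $\varphi[\O_X(m)]=m\varphi[\O_X(1)]=0$, so the equality in the second factor is exactly condition (2).

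Putting these two computations together gives the biconditional: linkage by $\O_X(m)$ is equivalent to (1) and (2) holding simultaneously. The main technical point is the verification that $\varphi[\O_X(m)]$ vanishes, which is where \Cref{cartier} plays its role; everything else is a direct unwinding of the definitions of $\pi^*$ and $\varphi$ on almost Cartier divisors not supported on $N_2$.
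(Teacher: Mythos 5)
Your argument is correct, and it arrives at the two conditions by the same basic mechanism as the paper --- reducing linkage to the equality $[C+D]=[\O_X(m)]$ in $\APic(X)$ and comparing images under $\pi^*$ and $\varphi$ --- but the sufficiency direction is routed differently. The paper does not invoke the injectivity of the first map $\pi^*\oplus\varphi$ of the global Hartshorne--Polini exact sequence; instead it uses the local isomorphisms $\varphi_x\colon\APic(\O_{X,x})\to\Cart M_{2,x}/\pi^*\Cart N_{2,x}$ at codimension-two points to deduce from (2) that $C+D$ is actually a Cartier divisor, and then concludes from (1) together with the injectivity of $\pi^*\colon\Pic(X)\to\Pic(S)$ over $\C$ (a separate Hartshorne--Polini result). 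Your single appeal to the injectivity of $\pi^*\oplus\varphi$ is cleaner and subsumes both steps, at the cost of hiding the intermediate fact that (1) and (2) force $C+D$ to be Cartier, not merely almost Cartier. Your explicit verification that $\varphi[\O_X(m)]=0$ via a general hyperplane section and Lemma~\ref{cartier} is exactly the content the paper compresses into ``Lemma~\ref{cartier} implies we must have (2),'' so that part is a welcome expansion rather than a deviation. One point worth making explicit: your computation of $\varphi[C+D]$ uses that $C$ and $D$ themselves may serve as the representatives in the definition of $\varphi$ (both being effective and not supported on any component of $N_2$), and that $\varphi$ is well defined with values in $\Cart M_2/\pi^*\Cart N_2$ --- i.e.\ modulo $\pi^*\Cart N_2$ only, not modulo linear equivalence on $M_2$; both facts are guaranteed by the cited results, and they are what make condition (2) a statement about the actual divisor $\tilde C\cap M_2+\tilde D\cap M_2$ rather than merely its class.
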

\begin{proof}
Suppose $C$ and $D$ are linked by $\O_X(m)$, where neither are supported on any component of $N_2$. Since both are almost Cartier divisors and the pullback of almost Cartier divisors is a group homomorphism, we must have (1). Lemma \ref{cartier} implies we must have (2). Conversely, $C$ and $D$ satisfy (1) and (2). For any codimension 2 point $x\in X$, the map $\varphi_x:\APic(\O_{X,x})\to\Cart M_{2,x}/\pi^* \Cart N_{2,x}$ is an isomorphism by \cite[Thm 3.1]{HP}. Since (2) is satisfied, $C+D$ has image 0 in $\APic(\O_{X,x})$ for every codimension 2 point $x\in X$ and thus $C+D$ is Cartier. Since both $C+D$ and $\O_X(m)$ pull back to $mh$, we conclude that they are isomorphic since the map $\pi^*: \Pic(X) \to \Pic(S)$ is injective over the complex numbers by \cite[Thm 4.5]{HP}. 
\end{proof}

\begin{theorem}
Suppose $\pi:S\to X$ is induced by a general linear projection $f:S\to \P^3$ of a nondegenerate smooth surface $S$ in $\P^5$.
Suppose $C$ is an integral curve on $S$ meeting $M_2$ transversely avoiding $R_1$.
If $m = 2C.M_2/h.M_2$ is a positive integer such that $mH-C$ is effective and $h^0(\O_S(mH-C))> C.M_2$, then there is a curve $D$ on $S$ such that $\pi(C)$ and $\pi(D)$ are linked on $X$.
\end{theorem}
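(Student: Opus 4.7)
The plan is to apply Proposition~\ref{linkage} by constructing a divisor $D \in |mH-C|$ not containing $M_2$ such that $(C \cap M_2) + (D \cap M_2) \in \pi^*\Cart N_2$; then $\pi(C)$ and $\pi(D)$ are linked by $\O_X(m)$ on $X$.

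Set $\alpha := C \cap M_2$, a reduced divisor of degree $C \cdot M_2$ on $M_2$, supported away from $R_1$ by hypothesis (and away from $M_3$ after a small perturbation if necessary). Let $\sigma$ denote the involution of the \'etale double cover $\pi|_{M_2}: M_2 \to N_2$ in a neighborhood of $\alpha$, and set $\beta := \sigma(\alpha)$. Then $\alpha + \beta = \pi^*(\pi(\alpha))$, and since $\pi(\alpha)$ lies in the smooth locus of $N_2$ it is Cartier on $N_2$, so $\alpha + \beta \in \pi^*\Cart N_2$. The defining relation $m(h \cdot M_2) = 2(C \cdot M_2)$ yields $\deg((mH - C)|_{M_2}) = C \cdot M_2 = \deg \beta$; hence any effective $D \in |mH - C|$ not containing $M_2$ and passing through every point of $\beta$ automatically satisfies $D \cap M_2 = \beta$. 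In that case $(C \cap M_2) + (D \cap M_2) = \alpha + \beta = \pi^*(\pi(\alpha)) \in \pi^*\Cart N_2$ and Proposition~\ref{linkage} applies.

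It remains to produce such a $D$. Passing through the $C \cdot M_2$ points of $\beta$ imposes at most $C \cdot M_2$ linear conditions on $H^0(\O_S(mH - C))$, so the hypothesis $h^0(\O_S(mH - C)) > C \cdot M_2$ yields $H^0(\I_{\beta/S}(mH - C)) \neq 0$. To ensure that some such section does not vanish on all of $M_2$, I would use the exact sequence
\[
0 \to \O_S(mH - C - M_2) \to \I_{\beta/S}(mH - C) \to \I_{\beta/M_2}\bigl((mH - C)|_{M_2}\bigr) \to 0
\]
and reduce the problem to showing that the induced restriction map $H^0(\I_{\beta/S}(mH - C)) \to H^0(\I_{\beta/M_2}((mH - C)|_{M_2}))$ is nonzero, whereupon a section hitting a nonzero element cuts out the required $D$.

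The main obstacle is controlling this restriction map. The target is nonzero precisely when $\beta \sim (mH - C)|_{M_2}$ on $M_2$, equivalently $\pi^*(\pi(\alpha)) \sim mH|_{M_2} = \pi^*(\O_X(m)|_{N_2})$; the degrees already match by the choice of $m$, and the remaining linear equivalence is established by comparing classes in $\Pic N_2$ via the double-cover structure of $\pi|_{M_2}$. Granting this, the surjectivity onto the one-dimensional target follows from the dimensional inequality $\dim H^0(\I_{\beta/S}(mH - C)) > \dim H^0(\O_S(mH - C - M_2))$, which is forced by the hypothesis $h^0(\O_S(mH - C)) > C \cdot M_2$ together with the fact that vanishing on $\beta$ imposes at most $C \cdot M_2$ conditions strictly stronger than vanishing on the whole of $M_2$. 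This produces $D$ with the desired properties.
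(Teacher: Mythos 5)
Your first two paragraphs are essentially the paper's proof: set $\beta=\sigma(C\cap M_2)$, observe that $\alpha+\beta=\pi^{-1}\pi(\alpha)$ lies in $\pi^*\Cart N_2$ because $C$ meets $M_2$ transversely away from $R_1$ and $M_3$, use $h^0(\O_S(mH-C))>C.M_2$ to produce a member $D$ of $|mH-C|$ containing $\beta$, and use $(mH-C).M_2=C.M_2$ to force $D\cap M_2=\beta$; Proposition~\ref{linkage} then concludes. The paper stops there and does not address the possibility that every divisor in the linear system $|\I_{\beta/S}(mH-C)|$ contains $M_2$ as a component.

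You noticed this and tried to close it --- the right instinct --- but the patch does not hold up. First, the linear equivalence $\beta\sim(mH-C)|_{M_2}$ is not obtained by ``comparing classes in $\Pic N_2$ via the double-cover structure'': equality of degrees on a curve of positive genus says nothing about linear equivalence. What is actually needed is $\pi(\alpha)\sim\O_{N_2}(m)$ up to $\ker(\pi^*:\Pic N_2\to\Pic M_2)$, a nontrivial condition which is itself a consequence of the existence of the $D$ you are constructing, so asserting it without proof comes close to assuming the conclusion. Second, the inequality $h^0(\I_{\beta/S}(mH-C))>h^0(\O_S(mH-C-M_2))$ does not follow from $h^0(\O_S(mH-C))>C.M_2$: for that you would need $h^0(\O_S(mH-C))-h^0(\O_S(mH-C-M_2))>C.M_2$, which is strictly stronger, and your parenthetical justification has the logic reversed --- vanishing on $\beta$ is a \emph{weaker} condition than vanishing on all of $M_2$, not a stronger one. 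The net effect is that your paragraphs one and two reproduce the published argument, while paragraphs three and four correctly identify a subtlety that the paper also leaves unaddressed but do not actually resolve it.
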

\begin{proof}
Since $C$ is a Cartier divisor on $S$, it meets $M_2$ at a Cartier divisor.
Since $C$ meets $M_2$ transversely, the intersection $C\cap M_2$ is a reduced set of points $p_1,\dots, p_n$ outside $R_1$ and $M_3$ where $n = C.M_2$.
Thus each point $p_i$ has precisely one corresponding reduced point $q_i$ in $f^{-1}f(p_i)$.
It follows that $p_i+q_i = \pi^{-1} \pi(p_i)$ is a Cartier divisor on $M_2$ in the image of $\pi^*\Cart N_2$.
By the proposition above, we need to find an effective divisor $D$ in the class of $mH-C$ which meets $M_2$ at $q_1,\dots, q_n$.
The choice of $m$ guarantees that $D.M_2 = C.M_2 = n$.
This is always possible if $h^0(\O_S(mH-C))>n$ considering the exact sequence
\[
0 \to H^0(\I_{Q}(mH-C)) \to H^0(\O_S(mH-C)) \to H^0(\O_{Q}),
\]
where $Q$ is the subscheme of $S$ consisting of the reduced points $q_1,\dots, q_n$.
\end{proof}

\subsection{Preserved curves}

\begin{definition}[Preserved curves]
A curve $C$ on $X$ is preserved if a curve $C'$ on $S$ maps isomorphically to $C$. 
\end{definition}

\begin{proposition}
Let $C$ be a preserved curve on $X$.
If $M_2$ is irreducible, then so is $N_2$, and $C$ is not supported on $N_2$.
\end{proposition}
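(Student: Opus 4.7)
The plan is to split the proposition into the two claims and treat each using \Cref{Projection}(4), which says that (outside the Veronese case) $\pi|_{M_2}: M_2 \to N_2$ is a surjective map of integral schemes of generic degree $2$. The hypothesis that $M_2$ is irreducible automatically rules out the Veronese case, in which $M_2$ would be a union of three conics.

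For the first claim, the observation is that $M_2 = \pi^{-1}(N_2)$ and $\pi: S \to X$ is surjective, so $\pi(M_2) = N_2$ as topological spaces. An irreducible space has irreducible continuous image, so $N_2$ is irreducible.

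For the second claim, the approach is by contradiction. Let $C' \subset S$ be a curve witnessing that $C$ is preserved, so that $\pi|_{C'}: C' \xrightarrow{\sim} C$ is an isomorphism, and suppose some irreducible component $C_0$ of $C$ lies in $N_2$. The matching component $C_0'$ of $C'$ satisfies $C_0' \subseteq \pi^{-1}(C_0) \subseteq M_2$. Since $C_0'$ and $M_2$ are each irreducible of dimension one, they have the same reduced support, so $(C_0')_{\mathrm{red}} = M_2$. By the first claim, $N_2$ is irreducible of dimension one, and since $C_0 \subseteq N_2$ is also one-dimensional, $C_0 = (N_2)_{\mathrm{red}}$. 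Restricting $\pi|_{M_2}$ through these identifications gives the isomorphism $C_0' \to C_0$, which has generic degree $1$; but \Cref{Projection}(4) says $\pi|_{M_2}$ has generic degree $2$ onto $N_2$, a contradiction.

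The one thing to verify carefully is the degree comparison. On the source side, \Cref{Projection}(4) tells us $M_2$ is already integral, so no distinction between $M_2$ and its reduced structure arises; on the target side, the first claim together with the dimension count forces $C_0$ and $N_2$ to share support, so ``generic degree onto $C_0$'' and ``generic degree onto $N_2$'' coincide. Once these bookkeeping points are in place, the argument is entirely formal.
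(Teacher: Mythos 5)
Your argument is correct and follows essentially the same route as the paper: $N_2$ is the irreducible image of the irreducible $M_2$, and a component of $C$ supported on $N_2$ would force the corresponding component of $C'$ to have support equal to all of $M_2$, contradicting the injectivity of $C'\to C$ because $M_2\to N_2$ is not injective on points. The one difference is where that non-injectivity comes from: you cite the generic 2-1 degree from \Cref{Projection}, which presumes $\pi$ is a general linear projection, whereas the paper argues intrinsically that an injective $M_2\to N_2$ would force $\pi$ to be an isomorphism, so its version also covers normalizations of surfaces with ordinary singularities that do not arise as projections.
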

\begin{proof}
The map $M_2\to N_2$ is not injective on the topological spaces. If it were, then $\pi:S\to X$ would be a homeomorphism of topological spaces. But both $S$ and $X$ are reduced, which would imply that $\pi$ is an isomorphism, contrary to our assumption. If $C$ contains $N_2$ set-theoretically, then any curve $C'$ on $S$ mapping onto $C$ must contain $M_2$ set-theoretically, and thus cannot be mapped isomorphically to $C$. 
\end{proof}

Preserved curves may be supported on $N_2$ when $M_2$ is not irreducible. 
If $X$ is the union of two planes meeting at a line $L = N_2$ in $\P^3$, and $S$ is its normalization given by the disjoint union of two planes, then $L$ is preserved since it is the isomorphic image of any one of the two lines on $S$. 

\begin{proposition}
Let $C$ be a curve on $X$ not supported on any component of $N_2$, then $C$ is preserved iff $\tilde{C}\to C$ is an isomorphism iff $C\cap N_2$ is a Cartier divisor on $C$. In particular, smooth curves on $X$ not supported on any components of $N_2$ are preserved. 
\end{proposition}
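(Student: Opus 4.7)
The plan is to prove that the three conditions---``$C$ is preserved'', ``$\tilde{C}\to C$ is an isomorphism'', and ``$C\cap N_2$ is a Cartier divisor on $C$''---are equivalent by closing a short cycle of implications, with the earlier proposition identifying $\pi:S\to X$ as the blowup of $X$ along $N_2$ serving as the main input. The final assertion about smooth curves then becomes a one-line corollary.

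First I would handle the equivalence of being preserved with $\tilde{C}\to C$ being an isomorphism. One direction is immediate from the definition (take $C'=\tilde{C}$). For the converse, suppose $\pi|_{C'}\colon C'\to C$ is an isomorphism with $C'\subset S$. I would compare $C'$ and $\tilde{C}$ as closed subschemes of $S$. On the open set $U=S-M_2$, the map $\pi$ restricts to an isomorphism onto $X-N_2$, and both $C'\cap U$ and $\tilde{C}\cap U$ are identified with $\pi^{-1}(C-(C\cap N_2))$. It therefore suffices to observe that each of $\tilde{C}$ and $C'$ is the scheme-theoretic closure of its restriction to $U$: the divisor $\tilde{C}$ is Cartier on the smooth surface $S$, hence has no embedded primes, and no component lies in $M_2$ by hypothesis on $C$; similarly $C'\cong C$ is a curve in the paper's sense (no isolated or embedded point components), with no component in $M_2$ because no component of $C$ lies in $N_2$. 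Hence $C'=\tilde{C}$ as subschemes, and $\tilde{C}\to C$ is an isomorphism.

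Next I would address the equivalence with $C\cap N_2$ being Cartier on $C$. If $\tilde{C}\to C$ is an isomorphism, then since $M_2$ is Cartier on the smooth surface $S$, the scheme-theoretic intersection $\tilde{C}\cap M_2$ is Cartier on $\tilde{C}$, and compatibility of ideal sheaves in the commutative square $\tilde{C}\hookrightarrow S$, $C\hookrightarrow X$ transports this to $C\cap N_2$ being Cartier on $C$. Conversely, if $C\cap N_2$ is Cartier on $C$, then $\I_{N_2/X}\cdot\O_C$ is an invertible ideal, so by the universal property of the blowup (and the earlier identification $S=\Bl_{N_2}X$) the inclusion $C\hookrightarrow X$ factors uniquely through $\pi$ via a morphism $j\colon C\to S$; since $\pi$ is separated (indeed finite) and $C\hookrightarrow X$ is a closed immersion, $j$ is a closed immersion. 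The subscheme $j(C)\subset S$ then witnesses $C$ as preserved, and the previous step produces the isomorphism $\tilde{C}\to C$. The final assertion follows: if $C$ is smooth, then $C\cap N_2$ is a zero-dimensional subscheme of the regular one-dimensional scheme $C$, and any such subscheme is automatically Cartier.

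The main obstacle is the scheme-theoretic identification $C'=\tilde{C}$ in the converse of the first equivalence, which requires ruling out embedded primes on $M_2$; this is precisely where the standing convention that curves have no isolated or embedded point components is indispensable, since otherwise $C'$ could in principle differ from $\tilde{C}$ along $M_2$ by a phantom scheme structure not detected on the dense open $U$.
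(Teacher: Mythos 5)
Your proof is correct and follows essentially the same route as the paper's: both rest on the identification of $\pi\colon S\to X$ with the blowup along $N_2$ and its universal property, the paper phrasing this as $\tilde{C}\cong \Bl_{C\cap N_2}C$ while you unpack the same implications directly. Your write-up additionally supplies details the paper leaves implicit, namely the schematic-closure comparison showing that any witnessing curve must equal $\tilde{C}$, and the use of $M_2=\pi^{-1}(N_2)$ being Cartier on $S$ to transport Cartier-ness to $C\cap N_2$.
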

\begin{proof}
Since $C$ is not supported on any components of $N_2$, the proper transform $\tilde{C}$ on $S$ is isomorphic to the blowup of $C$ at $C\cap N_2$ by the universal property of the blowup. If $C$ is a preserved curve, then the curve on $S$ that maps isomorphically to $C$ must be its proper transform $\tilde{C}$. 
It follows that $\tilde{C}$ is isomorphic to $C$ iff $C\cap N_2$ is a Cartier divisor on $C$. 
\end{proof}

\begin{theorem}\label{unique}
For an almost Cartier curve $C$ on $X$, we define the rational number
\[
m := \frac{2 (\pi^*C).M_2-2(g(C)-g(\pi^*C))}{h.M_2}.
\] 
Here the genus of a divisor is defined by the adjunction formula, which agrees with the arithmetic genus when the divisor is effective.
\begin{enumerate}
\item If $C$ is linked to a preserved curve $D$ by $\O_X(n)$ for some $n>0$, then $n = m$.
\item Conversely, if $m$ is a positive integer, then any nonzero section of $\I_{C/X}(m)$ defines a curve $D$ that is either preserved or is supported on a component of $N_2$. 
\end{enumerate}
\end{theorem}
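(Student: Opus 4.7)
The plan is to combine adjunction on the smooth surface $S$ with the classical liaison formula on the Gorenstein surface $X$, tied together by an explicit formula for $K_S$. First I would establish $K_S = (d-4)h - M_2$ in $\Pic S$, where $d = h.h$. Dualizing the conductor sequence $0 \to \O_X \to \pi_*\O_S \to \omega_{N_2}(4-d) \to 0$ of Corollary \ref{Formulas}(c) via $R\sHom_{\O_X}(-,\omega_X)$, and using that $\pi_*\O_S$ has depth two (so $\sExt^1_{\O_X}(\pi_*\O_S,\omega_X) = 0$), one obtains the short exact sequence $0 \to \pi_*\omega_S \to \omega_X \to \O_{N_2}(d-4) \to 0$. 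Tensoring with $\omega_X^{-1}$ identifies $\pi_*(\omega_S \otimes \pi^*\omega_X^{-1})$ with the conductor $\pi_*\I_{M_2/S}$, from which the class identity follows.

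For part (1), suppose $C$ is linked to a preserved curve $D$ by $\O_X(n)$. Proposition \ref{linkage} gives $\tilde C + \tilde D = nh$. Adjunction on the smooth surface $S$ yields
\[
2g(\tilde C) - 2g(\tilde D) = (\tilde C - \tilde D)\bigl((n+d-4)h - M_2\bigr),
\]
while the liaison formula for almost Cartier linkage on the Gorenstein surface $X$ gives
\[
2g(C) - 2g(D) = (n+d-4)(\tilde C - \tilde D).h.
\]
Subtracting one from the other produces the key \emph{delta transfer}
\[
2\bigl(g(C) - g(\tilde C)\bigr) - 2\bigl(g(D) - g(\tilde D)\bigr) = (\tilde C - \tilde D).M_2.
\]
Since $D$ is preserved, $\tilde D \to D$ is an isomorphism, so $g(D) = g(\tilde D)$. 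Combining the simplified transfer with $(\tilde C + \tilde D).M_2 = nh.M_2$ rearranges into $nh.M_2 = 2\tilde C.M_2 - 2(g(C) - g(\tilde C))$, which is precisely $n = m$.

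For part (2), any nonzero section of $\I_{C/X}(m)$ cuts out a degree $m$ surface whose residual intersection with $X$ produces an effective divisor $D$ linked to $C$ by $\O_X(m)$. Either $D$ is supported on a component of $N_2$ (done), or $D$ is almost Cartier. In the latter case the delta transfer applies, and combined with $\tilde C.M_2 + \tilde D.M_2 = mh.M_2$ and the defining identity $mh.M_2 = 2\tilde C.M_2 - 2(g(C) - g(\tilde C))$, it forces $g(D) = g(\tilde D)$. Since $\pi|_{\tilde D}\colon\tilde D\to D$ is birational between curves of equal arithmetic genus, $\pi_*\O_{\tilde D} = \O_D$, so the map is an isomorphism and $D$ is preserved.

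The main technical hurdle is the liaison formula $2g(C) - 2g(D) = (n+d-4)(\deg C - \deg D)$ for almost Cartier linkage on the singular Gorenstein surface $X$. This rests on the Gorenstein adjunction $\omega_C \cong \omega_X(C)|_C$ for almost Cartier $C$, part of Hartshorne's theory of generalized divisors \cite{H1}; once granted, both the liaison formula and the $K_S$ identification above reduce to routine manipulations.
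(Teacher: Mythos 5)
Your proof is correct and follows essentially the same route as the paper: both arguments combine the liaison genus formula on $X$ with adjunction on $S$ and the observation that $D$ is preserved exactly when $g(D)=g(\tilde D)$, detected via the vanishing of the zero-dimensional cokernel of $\O_D\to\pi_*\O_{\tilde D}$. The only cosmetic differences are that you package the two genus formulas into a single ``delta transfer'' identity before specializing, and you rederive the class identity $[M_2]=(h.h-4)h+c_1$ (equivalently $K_S=(h.h-4)h-M_2$) by dualizing the conductor sequence rather than quoting the double-point formula of \Cref{Formulas}.
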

\begin{proof}
Suppose $C$ is linked to a preserved curve $D$ by $\O_X(n)$ for some $n$. Then the arithmetic genus of $C$ and $D$ are related by 
\begin{align}
g(D)-g(C) = \frac{1}{2}(h^2+n-4)(\deg D-\deg C) = \frac{1}{2}(h^2+n-4)(h^2n-2 \deg C)
\end{align}
using liaison theory, see for example \cite[III Prop.1.2]{MDP}. 
Since $\pi^*: \APic(X)\to \Pic(S)$ is a group homomorphism, we have $\tilde{D}= nh-\pi^*C$ in $\Pic(S)$. 
The adjunction formula on $S$ yields     
\begin{align}
2g(\tilde{D}) -2 = \tilde{D}.(\tilde{D}+c_1) = (nh-\pi^*C).(nh-\pi^*C+c_1).
\end{align}
Since $\tilde{D}\to D$ is an isomorphism, we have $g(\tilde{D}) = g(D)$. 
Combining (1) and (2), we have an equality
\[
(h^2+n-4)(nh^2-2\deg C)+2g(C)-2 = (nh-\pi^*C).(nh-\pi^*C+c_1).
\]
Now $\pi^*C.h = \deg C$ since $C$ is almost Cartier by Proposition \ref{degree} and $M_2 = (h^2-4)h-c_1$ by \Cref{MultiplePoint}. After substitution, we arrive at the linear equation in $n$
\[
nh.M_2  = 2M_2.\pi^*C - 2(g(C) - g(\pi^*C)).
\]
Since $h.M_2 = 2\deg N_2\ne 0$, we see that $n$ must be equal to
\[
n = \frac{M_2.\pi^*C-(g(C)-g(\pi^*C))}{h.M_2}.
\]
This proves the necessary direction of the theorem.

Conversely, suppose $m$ is a positive integer and let $D$ be defined by a nonzero section of $\I_{C/X}(m)$. 
If $D$ is not supported on any components of $N_2$, then we have an exact sequence of sheaves on $D$
\[
0 \to \O_D \to \pi_*\O_{\tilde{D}} \to \K \to 0,
\]
where $K$ is supported on the zero dimension scheme $D\cap N_2$.
By the choice of $m$, we see that $g(D) = g(\tilde{D})$ by the same computation as above. 
Therefore $\chi(\O_D) = \chi(\O_{\tilde{D}})$ and $\chi(\K) = 0$. 
Since $\K$ is supported on a dimension zero subscheme it follows that $\K = 0$.
We conclude that $\tilde{D}\to D$ is an isomorphism.
\end{proof}

Note that we do not write $\tilde{C}$ since $C$ could be supported on a component of $N_2$ although it is almost Cartier, in which case the strict transform $\tilde{C}$ is undefined.


\begin{corollary}
For an almost Cartier divisor $D$ on $X$, there is at most one integer $m$ where $[D+\O_X(m)]$ contains a preserved curve. 
\end{corollary}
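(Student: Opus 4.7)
The plan is to reduce the statement to the uniqueness clause of Theorem~\ref{unique}(1) by linking each hypothetical preserved curve to a single common auxiliary almost Cartier curve $E$. If $C_1\in[D+\O_X(m_1)]$ and $C_2\in[D+\O_X(m_2)]$ are both preserved, then the same $E$ will link to each of them by $\O_X$ of some integer, and the theorem pins both linking integers down to the rational number $m(E)$ depending only on $E$, forcing $m_1=m_2$.

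Concretely, I would argue by contradiction, assuming $m_1<m_2$. Since $D$ is almost Cartier, so are $C_1$ and $C_2$, and the relation $C_2\equiv C_1+\O_X(m_2-m_1)$ holds in $\APic(X)$. Next I would choose $n\gg 0$ and a general section of $\I_{C_1/X}(n)$ whose zero locus decomposes as $C_1+E$ for an effective almost Cartier curve $E$; by construction this exhibits $C_1$ and $E$ as linked by $\O_X(n)$. Adding $\O_X(m_2-m_1)$ to both sides of $E+C_1\equiv\O_X(n)$ gives $E+C_2\equiv\O_X(n+m_2-m_1)$, exhibiting a linkage of $C_2$ and $E$ by $\O_X(n+m_2-m_1)$, which is again a positive integer. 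Now two applications of Theorem~\ref{unique}(1) to the almost Cartier curve $E$ (which is linked to the preserved curves $C_1$ and $C_2$ respectively) yield $n=m(E)$ and $n+m_2-m_1=m(E)$; subtracting gives $m_1=m_2$, the desired contradiction.

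The main obstacle is the existence of the auxiliary $E$: one must verify that for $n$ sufficiently large a general section of $\I_{C_1/X}(n)$ has a residual divisor that is genuinely an effective almost Cartier curve. For $n$ large, Serre vanishing on $X$ gives many global sections of $\I_{C_1/X}(n)$, and since $\O_X(n)$ is Cartier while $C_1$ is almost Cartier, the residual class $\O_X(n)-C_1$ lies in $\APic(X)$; the fact that a general such section represents this class by an effective almost Cartier divisor is standard in the generalized divisor formalism of \cite{H1}. Once $E$ is produced, the rest of the proof is just the group law on $\APic(X)$ combined with Theorem~\ref{unique}(1).
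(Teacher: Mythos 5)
Your proposal is correct and follows essentially the same route as the paper: both produce an auxiliary almost Cartier curve $E$ in the class $[\O_X(n)-C_1]$ for $n\gg 0$, observe that $E$ is then linked to both preserved curves by twists differing by $m_2-m_1$, and apply Theorem~\ref{unique}(1) twice to force the two linking integers to equal the same rational number $m(E)$. Your additional remarks on why the residual divisor $E$ exists are a reasonable elaboration of the paper's one-line assertion.
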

\begin{proof}
Suppose $[D_1] = [D+\O_X(n_1)]$ and $[D_2] = [D+\O_X(n_2)]$ are two classes that contain preserved curves $D_1$ and $D_2$. Then for $l\gg 0$, the class $[\O_X(l)-D_1]$ contains an almost Cartier curve $C$. Since $C$ is linked to both $D_1$ and $D_2$, we must have $n_1 = n_2$ by Theorem \ref{unique} (1).
\end{proof}

\begin{corollary}
Let $X\subset\P^3$ be an integral hypersurface with ordinary surface singularities.
If the singular locus of $X$ is irreducible, then any two smooth curves in the same biliaison class on $X$ are linearly equivalent.
\end{corollary}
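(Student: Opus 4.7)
The plan is to deduce this corollary as a direct consequence of the preceding corollary on the uniqueness of the biliaison shift containing a preserved curve. Let $C$ and $D$ be two smooth curves on $X$ in a common biliaison class, so that $D \sim C + \O_X(m)$ as generalized divisors on $X$ for some integer $m$; the goal is to show $m = 0$.

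The first step is to argue that smooth curves on $X$ not supported on $N_2$ are preserved and almost Cartier. Such a curve $C$ meets $N_2$ in finitely many points, and this intersection is Cartier on the smooth curve $C$; the earlier characterization of preserved curves then forces $C$ to be preserved, and $C$ is almost Cartier since it is not supported on any component of $N_2$. To see that smooth curves on $X$ are not supported on $N_2$, note that because $N_2$ is irreducible and the ordinary-singularities hypothesis makes $N_2$ singular at every one of its triple points, $N_2$ cannot support a smooth irreducible curve except in the degenerate case where $N_2$ is globally smooth (handled below). In the same way $D$ is preserved and almost Cartier.

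The second step is to apply the preceding corollary to the almost Cartier divisor $C$: at most one integer $m$ exists for which the class $[C + \O_X(m)]$ contains a preserved curve. The value $m = 0$ already works, with $C$ itself as the preserved representative, so this is the unique such integer. Since $D$ is preserved and lies in $[C + \O_X(m')]$ for some $m'$ by the biliaison hypothesis, uniqueness forces $m' = 0$, hence $D \sim C$.

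The main obstacle lies entirely in the first step, in particular in the degenerate case where $N_2$ is everywhere smooth and one of the curves, say $C$, coincides with $N_2$. There $C$ is not almost Cartier, while any smooth curve on $X$ not supported on $N_2$ is; since the almost-Cartier property is preserved under linear equivalence, the biliaison class of $N_2$ contains no other smooth curve, and the statement holds trivially. Modulo this case, the argument is nothing more than the observation that the preceding corollary applies to $C$, with $m = 0$ witnessed by $C$ itself.
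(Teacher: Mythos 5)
Your proof is correct and follows essentially the same route as the paper: apply the preceding uniqueness corollary to the almost Cartier divisor $C$, with $m=0$ witnessed by the smooth (hence preserved) curve $C$ itself, and dispose separately of smooth curves supported on the irreducible singular locus $N_2$. You are in fact slightly more explicit than the paper about the mixed case where one curve equals $N_2$ and the other does not, which you rule out via the failure of $N_2$ (and hence of every divisor in its biliaison class) to be almost Cartier.
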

\begin{proof}
This is true for two smooth curves not supported on the singular locus $N_2$ by the previous corollary.
If two smooth curves are supported on $N_2$, since $N_2$ is irreducible, the two curves must be the same.
\end{proof}

The situation is very different on a smooth projective surface $S$ with an ample divisor $h$. 
If $C$ is any divisor on $S$, then for any $m\gg 0$, the linear system $|C+mh|$ is basepoint-free and contains a smooth curve by Bertini's theorem.

\subsection{Homological invariants}\

In this subsection, we show that certain homological invariants of preserved curves and curves linked to preserved curves on the singular surface $X$ can be computed on the normalization $S$.

\begin{lemma}\label{reduced}
Let $C$ be a curve with an effective line bundle $\F$.
If $C$ is reduced, then $h^0(\F^{-1}) = 0$.
\end{lemma}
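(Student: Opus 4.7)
The plan is to realise $\F^{-1}$ as the ideal sheaf of an effective Cartier divisor and then rule out nonzero global sections by using that a reduced projective curve has only locally constant global functions. Since $\F$ is effective, I would choose a regular (non-zero-divisor) section $0\ne t\in H^0(\F)$; the section $t:\O_C\to\F$ has cokernel $\O_D$ for the effective Cartier divisor $D=(t)_0$, and tensoring with $\F^{-1}$ identifies $\F^{-1}$ with the ideal sheaf $\O_C(-D)\subset\O_C$. This gives the short exact sequence
$$0\to\F^{-1}\to\O_C\to\O_D\to 0.$$
Passing to global sections identifies $H^0(\F^{-1})$ with the subspace of $H^0(\O_C)$ consisting of functions vanishing along $D$.

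Next, I would use that $C$ is reduced and projective over $\C$. Then $H^0(\O_C)$ is a reduced finite-dimensional $\C$-algebra, hence splits as a product of copies of $\C$ indexed by the connected components of $C$; equivalently, any global section of $\O_C$ is locally constant on $\pi_0(C)$. A nonzero $s\in H^0(\F^{-1})$ would therefore restrict to a nonzero constant $\lambda_j\in\C^\times$ on some connected component $C_j$, while also vanishing on $D\cap C_j$. Since $\lambda_j$ is a unit, this is impossible unless $D\cap C_j=\varnothing$, meaning $t|_{C_j}$ is nowhere vanishing and $\F$ is trivialised by $t$ over $C_j$.

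The main (really the only) point requiring attention is whether this last degenerate case can occur. I would read the paper's convention ``$\F$ is effective'' as forbidding it on every component, so that $D$ meets every connected component of $C$; this is how effectiveness is usually understood in Hartshorne's generalised-divisor framework, and it matches the intended applications, where $C$ is typically connected. Granted this, the previous paragraph gives a contradiction for any nonzero $s$, and one concludes $H^0(\F^{-1})=0$.
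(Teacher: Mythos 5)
Your proof is correct in substance but takes a genuinely different route from the paper. The paper passes to the normalization $p:\tilde{C}\to C$: since $\tilde{C}$ is a disjoint union of smooth projective curves on which $p^*\F$ is effective, the classical degree argument gives $h^0(\tilde{C},(p^*\F)^{-1})=0$, and reducedness of $C$ is used only to get the injection $\O_C\hookrightarrow p_*\O_{\tilde{C}}$, hence $H^0(\F^{-1})\hookrightarrow H^0(\tilde{C},(p^*\F)^{-1})=0$. You instead stay on $C$ itself, realize $\F^{-1}$ as the ideal sheaf of the zero divisor $D$ of a regular section, and use that $H^0(\O_C)$ of a reduced projective curve is a product of copies of $\C$; a unit cannot vanish along $D$ unless $D$ misses that connected component. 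Your argument is more elementary (no normalization) and in fact slightly more robust: it only needs $D$ to meet each \emph{connected} component, whereas the paper's proof implicitly needs $p^*\F$ to have positive degree on each \emph{irreducible} component of $\tilde{C}$. Both proofs share the same two implicit hypotheses, which you are right to flag: one must be able to choose a section not vanishing identically on any component (automatic if ``effective'' means $\F\cong\O_C(D)$ for an effective Cartier divisor, as in Hartshorne's framework), and one must exclude the degenerate case $\F|_{C_j}\cong\O_{C_j}$ on some connected component (e.g.\ $\F=\O_C$), for which the lemma as literally stated fails. Neither issue arises in the paper's application, where $\F$ is a positive twist of a very ample bundle restricted to $N_2$.
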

\begin{proof}
Let $p:\tilde{C} \to C$ be the normalization, then $\tilde{C}$ is a disjoint union of nonsingular curves. 
Since $p^* \F$ is effective on each component of $\tilde{C}$, it follows that $h^0(\tilde{C}, (p^* \F)^{-1}) = 0$. There is an injection of $\O_C \hookrightarrow p_* \O_{\tilde{C}}$ and therefore 
\[
H^0(C,\F^{-1}) \hookrightarrow H^0(C, (p_* \O_{\tilde{C}}) \otimes \F^{-1}) \cong H^0(\tilde{C}, (p^* \F)^{-1}) = 0. \qedhere
\]
\end{proof}

The assumption that $C$ is of pure dimension 1 and reduced cannot be dropped, as the following two counter-examples demonstrate. (1) A line with an embedded point has sections in infinitely many negative degrees. (2) Let $E$ be the exceptional curve of the blowup of $\P^2$ at origin, then any curve $D$ in $|3E|$ is non-reduced. If $H$ is the very ample line bundle of conics through the origin, then $h^0(\O_D(-H))\ne 0$ by a simple computation.

\begin{corollary}
Let $S$ be a smooth surface with a very ample line bundle $\L$ whose class in $A^1(S)$ is $h$. 
If $h^0(\L) = 4$ or $h^0(\L)\ge 6$, then $h^1(\L^n) = h^2(\L^n) = 0$ for $n> h.h-4$.
\end{corollary}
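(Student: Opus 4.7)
The plan is to split into the two cases $h^0(\L)=4$ and $h^0(\L)\geq 6$ (the gap at $h^0(\L)=5$ corresponds to $S\subset\P^4$, where the projection machinery of \Cref{Projection} does not directly apply). In the first case, $\L$ embeds $S$ as a smooth hypersurface of degree $d=h.h$ in $\P^3$, and twisting the defining sequence $0\to\O_{\P^3}(n-d)\to\O_{\P^3}(n)\to\O_S(n)\to 0$ gives $H^1(\L^n)=H^2(\O_{\P^3}(n-d))=0$ for all $n$, and $H^2(\L^n)=H^3(\O_{\P^3}(n-d))=0$ precisely when $n\geq d-3$, i.e.\ $n>h.h-4$.

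For $h^0(\L)\geq 6$, I would first project $S$ isomorphically onto a non-degenerate smooth surface in $\P^5$ (always possible for a smooth surface), and then take a general linear projection $\pi:S\to X\subset\P^3$, which by \Cref{Projection} satisfies all the hypotheses of \Cref{MultiplePoint} (the Veronese/Steiner case is allowed as well). The image $X$ is a hypersurface of degree $d:=h.h$. Tensoring the Roberts sequence \eqref{E1} in \Cref{Formulas} by $\O_{\P^3}(n)$ gives
\[
0\to\O_X(n)\to\pi_*\L^n\to\omega_{N_2}(n+4-d)\to 0,
\]
and since $\pi$ is affine, $H^i(\pi_*\L^n)=H^i(S,\L^n)$. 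The cohomology sequence of $X\subset\P^3$ yields $H^1(\O_X(n))=0$ for every $n$ and $H^2(\O_X(n))=0$ as soon as $n>d-4$. Since $N_2$ has dimension $1$, $H^2(\omega_{N_2}(-))=0$, so passing to the long exact sequence reduces both desired vanishings to the single claim $H^1(\omega_{N_2}(m))=0$ for $m:=n+4-d>0$.

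The crux is exactly this last vanishing, and this is where \Cref{reduced} enters. Because $N_2$ is Cohen-Macaulay (by \Cref{MultiplePoint}), Serre duality on $N_2$ with values in the invertible twist gives
\[
H^1(N_2,\omega_{N_2}(m))\;\cong\;H^0(N_2,\O_{N_2}(-m))^{\vee}.
\]
By \Cref{Projection}, $N_2$ is reduced in every case (integral except in the Steiner case, where it is a union of three concurrent reduced lines), and $\O_{N_2}(m)$, being the restriction of the very ample $\O_{\P^3}(m)$, has positive degree on every component of the normalization and is in particular effective. \Cref{reduced} then gives $H^0(N_2,\O_{N_2}(-m))=0$, which finishes the argument. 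I expect the one subtlety to flag is that the dualizing object $\omega_{N_2}$ need not be invertible (ordinary triple points are not Gorenstein), so Serre duality must be invoked in the Cohen-Macaulay form, applied after tensoring $\omega_{N_2}$ with the locally free sheaf $\O_{N_2}(m)$.
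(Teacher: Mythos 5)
Your proposal is correct and follows essentially the same route as the paper: the $h^0(\L)=4$ case via the hypersurface sequence in $\P^3$, and the $h^0(\L)\ge 6$ case via a general projection to $\P^3$, the Roberts sequence \eqref{E1} twisted by $\O_{\P^3}(n)$, Serre duality on the Cohen--Macaulay curve $N_2$, and \Cref{reduced} applied to the reduced curve $N_2$. Your extra remarks (the intermediate projection to $\P^5$, the Steiner case, and the non-invertibility of $\omega_{N_2}$) only make explicit points the paper leaves implicit.
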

\begin{proof}
If $h^0(\L) = 4$, then $S$ can be embedded as a hypersurface in $\P^3$ and $h^1(\L^n) = 0$ for all $n$.
If $h^0(\L) \ge 6$, then a general choice of four sections gives map $f:S\to \P^3$ that satisfies the \Cref{Formulas}.
Let $X$ be the image hypersurface and consider the short exact sequence
\[
0 \to \O_X \to f_* \O_S \to \omega_{N_2}(4-h.h) \to 0.
\]
The long exact sequence of cohomologies yields
\[
0 \to H^1((f_*\O_S)\otimes \O_X(n)) \to H^1(\omega_{N_2}(4-h.h+n)).
\]
Since $f_*(\O_S\otimes f^*\O_X(n)) = (f_*\O_S)\otimes \O_X(n)$ and $f$ is affine, it follows that the left term is just $H^1(\L^n)$.
Since $N_2$ is Cohen-Macaulay, the right term is dual to $H^0(\O_{N_2}(-n+h.h-4))$, which vanishes if $N_2$ is reduced and $n>h.h-4$ by the previous lemma.
Since $X$ is a hypersurface of degree $h.h$, we have $h^2(\O_X(n)) = h^0(\O_X(-n+h.h-4))$, which vanishes if $n>h.h-4$.
It follows from the long exact sequence of cohomologies that $h^2(\L^n) = $ for $n>h.h-4$ since $\omega_{N_2}$ has one-dimesional support.
\end{proof}

\begin{proposition}\label{specialty}
If $D$ is a preserved curve on $X$ and $h^1(\O_S(nh))=0$ for some $n$, then
$h^1(\O_D(n))=h^2(\O_S(nh-\tilde{D}))-h^2(\O_S(nh))$.
\end{proposition}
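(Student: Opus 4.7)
The plan is to reduce the computation of $h^1(\O_D(n))$ on the singular curve $D\subset X$ to a cohomology computation on the smooth surface $S$ by exploiting the preserved hypothesis. Since $D$ is preserved, the morphism $\pi$ restricts to an isomorphism $\tilde{D}\xrightarrow{\sim} D$, and the restriction of $\pi^*\O_X(n) = \O_S(nh)$ to $\tilde{D}$ corresponds to $\O_D(n)$ under this isomorphism. Therefore
\[
h^1(\O_D(n)) = h^1\bigl(\tilde{D},\,\O_S(nh)|_{\tilde{D}}\bigr).
\]

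Next, I would use the structure sequence on $S$ associated with the effective Cartier divisor $\tilde{D}$:
\[
0 \to \O_S(nh - \tilde{D}) \to \O_S(nh) \to \O_{\tilde{D}}(nh) \to 0.
\]
Taking the long exact sequence of cohomologies and inserting the vanishing $h^1(\O_S(nh))=0$, one obtains
\[
0 \to H^1(\O_{\tilde{D}}(nh)) \to H^2(\O_S(nh-\tilde{D})) \to H^2(\O_S(nh)) \to H^2(\O_{\tilde{D}}(nh)) \to 0.
\]
Since $\tilde{D}$ is one-dimensional, $H^2(\O_{\tilde{D}}(nh))=0$, so the alternating sum of dimensions in the remaining three-term exact sequence yields
\[
h^1(\O_{\tilde{D}}(nh)) = h^2(\O_S(nh-\tilde{D})) - h^2(\O_S(nh)),
\]
and combining with the first displayed identity gives the claim.

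There is essentially no obstacle here: the preserved hypothesis exactly trivializes the comparison between invariants of $D$ and of $\tilde{D}$ (no cokernel from $\O_D\hookrightarrow \pi_*\O_{\tilde{D}}$ appears), and the vanishing of $h^1(\O_S(nh))$ is what allows $h^1(\O_{\tilde{D}}(nh))$ to be read off as a difference of $h^2$'s rather than as an alternating sum involving three terms. The only subtle point worth noting in the write-up is justifying $\O_S(nh)|_{\tilde{D}}\cong \pi^*\O_D(n)$ under the preserved identification $\tilde{D}\simeq D$, which follows from the definition of preserved curve together with $\pi^*\O_X(1)=\O_S(h)$.
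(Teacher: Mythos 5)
Your proof is correct and follows essentially the same route as the paper: both use the structure sequence $0 \to \O_S(nh-\tilde{D}) \to \O_S(nh) \to \O_{\tilde{D}}(nh) \to 0$, the vanishing $h^1(\O_S(nh))=0$ and $h^2(\O_{\tilde{D}}(nh))=0$, together with the identification $H^1(\O_{\tilde{D}}(nh))=H^1(\O_D(n))$ coming from the preserved hypothesis. The only difference is cosmetic: the paper writes $\I_{\tilde{D}/S}(nh)$ where you write $\O_S(nh-\tilde{D})$.
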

\begin{proof}
We have an exact sequence
\[
H^1(\O_S(nh)) \to H^1(\O_{\tilde{D}}(nh)) \to H^2(\I_{\tilde{D}/S}(nh)) \to H^2(\O_S(nh)) \to 0.
\]
Note that $H^1(\O_{\tilde{D}}(nh)) = H^1(\O_D(n))$ since $D$ is preserved. 
\end{proof}

\begin{proposition}\label{Hilbert}
If $C$ is a curve linked to a preserved curve $D$ on $X$ by $\O_X(m)$ and $h^1(\O_S((h.h+m-n-4)h) = 0$, then
\[
h^0(\I_{C/\P^3}(n)) = h^0(\I_T(n))+h^0(\O_S(nh-\pi^*C-M_2))-h^0(\O_S((n-m)h-M_2)).
\]
Here $T$ is the $(h.h,m)$-complete intersection in $\P^3$ linking $C$ and $D$, where
\[
h^0(\I_T(n)) =\binom{n-m+3}{3}+\binom{n-h.h+3}{3}-\binom{n-m-h.h+3}{3}.
\]
\end{proposition}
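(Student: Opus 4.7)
The plan is to reduce the computation of $h^0(\I_{C/\P^3}(n))$ to a computation on $S$ via the standard liaison exact sequence in $\P^3$, then use the hypothesis that $D$ is preserved to replace $\omega_D$ by the canonical sheaf of its proper transform $\tilde{D}$ on $S$ and apply the adjunction formula on the smooth surface.

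First, I would write down the classical liaison exact sequence for the two curves $C$ and $D$ linked by the $(h.h,m)$-complete intersection $T = X \cap Y \subset \P^3$, where $Y$ is a degree $m$ hypersurface cutting out $D$ on $X$. Using $\omega_T = \O_T(h.h+m-4)$ and the duality $\sHom_{\O_T}(\O_D,\omega_T) \cong \omega_D$, this produces
\[
0 \to \I_{T/\P^3}(n) \to \I_{C/\P^3}(n) \to \omega_D(n+4-h.h-m) \to 0.
\]
Since $H^2(\O_{\P^3}(k))=0$ for every $k$, the Koszul resolution of $\I_T$ gives $H^1(\I_T(n))=0$ unconditionally, and it also yields the stated formula $h^0(\I_T(n)) = \binom{n-m+3}{3}+\binom{n-h.h+3}{3}-\binom{n-h.h-m+3}{3}$. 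Taking cohomology in the liaison sequence therefore reduces the claim to verifying
\[
h^0(\omega_D(n+4-h.h-m)) = h^0(\O_S(nh-\pi^*C-M_2)) - h^0(\O_S((n-m)h-M_2)).
\]

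Because $D$ is preserved, $\pi$ restricts to an isomorphism $\tilde{D}\to D$, so $\omega_D \cong \omega_{\tilde{D}}$ and $\O_D(n) \cong \O_{\tilde{D}}(nh)$. On the smooth surface $S$, the adjunction formula gives $\omega_{\tilde{D}} = \O_S(K_S+\tilde{D})|_{\tilde{D}}$, which sits in the short exact sequence
\[
0 \to \O_S(K_S+kh) \to \O_S(K_S+\tilde{D}+kh) \to \omega_{\tilde{D}}(k) \to 0
\]
for $k = n-h.h-m+4$. Taking global sections and using Serre duality on $S$, namely
\[
H^1(\O_S(K_S+kh)) \cong H^1(\O_S(-kh))^{\vee} = H^1(\O_S((h.h+m-n-4)h))^{\vee},
\]
the hypothesis kills the obstruction and I obtain $h^0(\omega_{\tilde{D}}(k))$ as the difference of the two global sections on $S$.

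It remains to identify the two line bundles on $S$ with the ones in the statement. By \Cref{Formulas}\ref{M2} we have $c_1 = M_2 - (h.h-4)h$, hence $K_S = (h.h-4)h - M_2$; and by \Cref{linkage} applied to the linked pair $C,D$, we have $\tilde{D} = mh - \pi^*C$ in $\Pic(S)$ (using $\tilde{C}=\pi^*C$ from the proper-transform proposition). Substituting these into $K_S+kh$ and $K_S+\tilde{D}+kh$ with $k=n-h.h-m+4$ gives exactly $(n-m)h-M_2$ and $nh-\pi^*C-M_2$ respectively, which yields the displayed formula. The only real subtlety is the Serre-duality vanishing step, which is precisely where the hypothesis $h^1(\O_S((h.h+m-n-4)h))=0$ is used; once this is in hand the rest is a matter of bookkeeping with $K_S$, $\tilde{D}$ and $M_2$.
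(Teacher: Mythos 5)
Your proof is correct and follows essentially the same route as the paper: the liaison formula $h^0(\I_{C/\P^3}(n)) = h^0(\I_T(n)) + h^1(\O_D(h.h+m-n-4))$ (which you recover from the liaison exact sequence rather than citing it), followed by a reduction to line bundles on $S$ using that $\tilde D\to D$ is an isomorphism, $\tilde D = mh-\pi^*C$ and $c_1 = M_2-(h.h-4)h$. The only cosmetic difference is that you work with $h^0(\omega_{\tilde D}(k))$ via the adjunction sequence, whereas the paper works with $h^1(\O_{\tilde D}(nh))$ via the ideal-sheaf sequence and applies Serre duality on $S$ at the end --- these are Serre-dual presentations of the identical computation, and the hypothesis $h^1(\O_S((h.h+m-n-4)h))=0$ plays the same role in both.
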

\begin{proof}
By liaison theory \cite[Prop 1.2]{MDP}, we have
\[
h^0(\I_{C/\P^3}(n)) = h^0(\I_T(n))+h^1(\O_D(h^2+m-n-4)).
\]
From the Koszul complex
\[
0 \to \O_{\P^3}(-m-h^2) \to \O_{\P^3}(-m)\oplus \O_{\P^3}(-h^2) \to \I_T \to 0
\]
it follows that 
\[
h^0(\I_T(n)) = \binom{n-m+3}{3}+\binom{n-h^2+3}{3}-\binom{n-m-h^2+3}{3}.
\]
By Proposition \ref{specialty}, if $h^1(\O_S((h^2+m-n-4)h)) = 0$, then 
\[
h^1(\O_D(h^2+m-n-4)) = h^2(\O_S((h^2+m-n-4)h-\tilde{D}) - h^2(\O_S((h^2+m-n-4)h)
\]
\[
= h^0(\O_S(\tilde{D}-(h^2+m-n-4)h-c_1)) - h^0(\O_S(-(h^2+m-n-4)h-c_1))
\]
\[
= h^0(\O_S(nh-\pi^*C-M_2))-h^0(\O_S((n-m)h-M_2)). \qedhere
\]
\end{proof}

\begin{proposition}\label{Rao}
Let $C$ be a preserved curve on $X$ that is linked to a preserved curve $D$ by $\O_X(m)$.
If $h^1(\O_S(lh)) = 0$ for all $l$, then 
\begin{align*}
h^1(\I_{C/\P^3}(n)) = & h^0(\O_S(nh))-h^0(\O_S(nh-\tilde{C}))+h^1(\O_S(nh-\tilde{C}))-h^0(\O_T(n))\\
& +h^0(\O_S(nh-\tilde{C}-M_2))-h^0(\O_S((n-m)h-M_2))
\end{align*}
Here $T$ is a $(m,h.h)$-complete intersection curve in $\P^3$ as before.
\end{proposition}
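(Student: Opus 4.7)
The plan is to express $h^1(\I_{C/\P^3}(n))$ through the standard cohomology long exact sequence on $\P^3$, and then to convert each of the resulting global section counts into data on the smooth surface $S$, using (a) the fact that $C$ is preserved so $\tilde C \to C$ is an isomorphism and $\pi^*C = \tilde C$, and (b) Proposition \ref{Hilbert} for the ideal term. The single global hypothesis $h^1(\O_S(\ell h)) = 0$ for all $\ell$ is what makes this translation work cleanly.

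First I would apply cohomology to $0 \to \I_{C/\P^3}(n) \to \O_{\P^3}(n) \to \O_C(n) \to 0$. Since $H^1(\O_{\P^3}(n)) = 0$, this yields
\[
h^1(\I_{C/\P^3}(n)) \;=\; h^0(\I_{C/\P^3}(n)) \;-\; h^0(\O_{\P^3}(n)) \;+\; h^0(\O_C(n)).
\]
Because $T$ is a complete intersection, hence arithmetically Cohen--Macaulay, $h^1(\I_T(n)) = 0$, so the analogous sequence for $T$ gives $h^0(\O_{\P^3}(n)) = h^0(\I_T(n)) + h^0(\O_T(n))$. Substituting, and then applying \Cref{Hilbert} (whose hypothesis $h^1(\O_S((h.h+m-n-4)h)) = 0$ is part of our blanket vanishing assumption) to replace $h^0(\I_{C/\P^3}(n)) - h^0(\I_T(n))$ by the two surface terms $h^0(\O_S(nh-\pi^*C-M_2))-h^0(\O_S((n-m)h-M_2))$, we are reduced to computing $h^0(\O_C(n))$ in terms of $S$.

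Next I would use that $C$ is preserved: by the proposition characterizing preserved curves, $\tilde C \to C$ is an isomorphism and the pullback of $\O_X(1)$ restricts to $\O_{\tilde C}(h)$, so $h^0(\O_C(n)) = h^0(\O_{\tilde C}(nh))$. From the structure sequence
\[
0 \to \O_S(nh - \tilde C) \to \O_S(nh) \to \O_{\tilde C}(nh) \to 0
\]
and the vanishing $h^1(\O_S(nh)) = 0$, the long exact sequence collapses to
\[
h^0(\O_{\tilde C}(nh)) \;=\; h^0(\O_S(nh)) \;-\; h^0(\O_S(nh - \tilde C)) \;+\; h^1(\O_S(nh - \tilde C)).
\]

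Finally I would substitute this expression for $h^0(\O_C(n))$ back into the formula from the first step, and use $\pi^*C = \tilde C$ (since $C$ is almost Cartier and not supported on $N_2$) to rewrite $\pi^*C$ in the $M_2$ term as $\tilde C$. Collecting everything yields exactly the claimed formula. The only place where care is needed is bookkeeping: confirming that all the vanishings invoked — namely $h^1(\O_{\P^3}(n))=0$, $h^1(\I_T(n))=0$ (complete intersection), and the various $h^1(\O_S(\ell h))=0$ needed both here and inside \Cref{Hilbert} — are covered by the stated hypothesis. There is no hard geometric step; the main obstacle is purely keeping the six terms straight during substitution.
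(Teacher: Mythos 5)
Your proposal is correct and follows essentially the same route as the paper: the long exact sequence of $0\to\I_{C/\P^3}(n)\to\O_{\P^3}(n)\to\O_C(n)\to 0$, the identity $h^0(\O_{\P^3}(n))=h^0(\I_T(n))+h^0(\O_T(n))$ for the ACM complete intersection $T$, the liaison formula of \Cref{Hilbert} for the ideal term, and the restriction sequence on $S$ for $h^0(\O_C(n))$ via preservedness. The bookkeeping and the verification that the blanket vanishing $h^1(\O_S(lh))=0$ covers the hypothesis of \Cref{Hilbert} are exactly as in the paper's argument.
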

\begin{proof}
Since $h^1(\O_S(lh)) = 0$ for all $l$ by assumption, the formulas for $h^0(\I_{C/\P^3}(n))$ and $h^2(\I_{C/\P^3}(n)) = h^1(\O_C(n))$ are given by the previous two propositions.
Thus we have
\begin{align*}
h^1(\I_{C/\P^3}(n)) & = h^0(\O_C(n))-h^0(\O_{\P^3}(n))+h^0(\I_{C/\P^3}(n))\\
& = h^0(\O_C(n))-h^0(\O_{\P^3}(n))+h^0(\I_T(n))+h^1(\O_D(h.h+m-n-4))\\
& =h^0(\O_C(n))-h^0(\O_T(n))+h^1(\O_D(h.h+m-n-4)).
\end{align*}
Since $h^1(\O_S(l)) = 0$ for all $l$, we have 
\[
h^0(\O_C(n))= h^0(\O_C(nh)) = h^0(\O_S(nh))-h^0(\I_{\tilde{C}/S}(nh))+h^1(\I_{\tilde{C}/S}(nh)).
\]
We also rewrite $h^1(\O_D(h.h+m-n-4))$ as in the proof of the previous proposition.
\end{proof}


The following useful proposition by Gruson-Peskine allows us to compute the dimension of the sections of the normal bundle of a smooth curve on the singular surface $X$.
By deformation theory, this is equal to the dimension of the tangent space of the Hilbert scheme at the closed point corresponding to the curve.

\begin{proposition}[Gruson-Peskine]\label{Normal}
Let $C$ be a smooth connected curve on $X$ not supported on $N_2$, whose proper transform $\tilde{C}$ avoids $R_1$. Then there is an exact sequence of bundles
\[
0 \to \N_{\tilde{C}/S} \to \N_{C/\P^3} \to \O_S(4h-c_1)\otimes \O_{\tilde{C}} \to 0.
\]
\end{proposition}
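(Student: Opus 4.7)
The plan is to realize the cokernel $\N_{C/\P^3}/\N_{\tilde C/S}$ as the restriction to $\tilde C$ of a ``relative normal bundle'' of the finite map $\pi:S\to \P^3$ (the composition through $X$), and then extract the stated sequence from a snake-lemma chase.

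First, I would analyze the differential $d\pi:\T_S\to \pi^*\T_{\P^3}$ on a neighborhood of $\tilde C$ in $S$. By the definition at the start of the section, the ramification locus $R_1$ is supported exactly where $d\pi$ drops rank by at least one; since $\tilde C$ avoids $R_1$ by hypothesis, $d\pi$ has rank $2$ at every point of $\tilde C$ and is therefore injective on an open neighborhood $U\supset \tilde C$ in $S$. Its cokernel is then a line bundle $\N_\pi$ on $U$ sitting in
\[
0\to \T_S\to \pi^*\T_{\P^3}\to \N_\pi\to 0,
\]
so that $c_1(\N_\pi)=c_1(\pi^*\T_{\P^3})-c_1(\T_S)=4h-c_1$, i.e.\ $\N_\pi\cong \O_U(4h-c_1)$.

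Next I would use that $C$ is smooth and not supported on $N_2$, so by the earlier results of this section $\pi|_{\tilde C}:\tilde C\to C$ is an isomorphism. Pulling the tangent sequence of $C\subset \P^3$ back along this isomorphism gives a sequence on $\tilde C$ that can be placed alongside the tangent sequence of $\tilde C\subset S$, connected by $d\pi|_{\tilde C}$, in the commutative diagram
\[
\begin{array}{ccccccccc}
0 & \to & \T_{\tilde C} & \to & \T_S|_{\tilde C} & \to & \N_{\tilde C/S} & \to & 0 \\
 & & \| & & \downarrow & & \downarrow & & \\
0 & \to & \T_{\tilde C} & \to & \pi^*\T_{\P^3}|_{\tilde C} & \to & \N_{C/\P^3} & \to & 0.
\end{array}
\]
A direct snake-lemma chase, using that both the left vertical and the middle vertical are injective with cokernels $0$ and $\N_\pi|_{\tilde C}$, then produces the desired exact sequence
\[
0 \to \N_{\tilde C/S} \to \N_{C/\P^3} \to \O_S(4h-c_1)\otimes \O_{\tilde C} \to 0.
\]

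The main subtlety is that $\pi$ is not a closed immersion even on a neighborhood of a typical point of $\tilde C\cap M_2$: at such a node of $X\subset \P^3$ two distinct sheets of $S$ cross, so one cannot directly invoke the usual normal-bundle sequence for a composition of closed embeddings. The fix is to build $\N_\pi$ as the cokernel of $d\pi$ rather than from conormal ideals; injectivity of $d\pi$ along $\tilde C$ (guaranteed by avoiding $R_1$) is exactly what is needed to make $\N_\pi$ an honest line bundle and to make the snake comparison run cleanly, since the bottom-row map $\T_{\tilde C}\to \pi^*\T_{\P^3}|_{\tilde C}$ is visibly the differential of the composition $\tilde C\xrightarrow{\sim} C\hookrightarrow \P^3$.
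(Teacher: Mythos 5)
Your proof is correct and is essentially the paper's argument in dual form: the paper runs the snake lemma on the two conormal sequences (using that $S\to X$ is an immersion along $\tilde{C}$ precisely because $\tilde{C}$ avoids $R_1$) and then dualizes, identifying the quotient line bundle via wedge powers and adjunction, whereas you run the snake lemma directly on the tangent sequences and identify the quotient as $\det(\pi^*\T_{\P^3})\otimes\det(\T_S)^{-1}$ restricted to $\tilde{C}$ --- the same determinant computation. Both arguments hinge on the same key point, namely that avoiding $R_1$ makes $d\pi$ fiberwise injective along $\tilde{C}$, so no substantive difference.
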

\begin{proof}
Technically we should pullback all sheaves to $\tilde{C}$ or pushforward all sheaves to $C$, but we omit this from the notations since $\pi:\tilde{C}\to C$ is an isomorphism.
The map of sheaves
\[
\T_S\otimes \O_{\tilde{C}} \to \T_X\otimes \O_C
\]
is injective and locally split since the map $S\to X$ is an immersion away from $R_1$. Thus the dual morphism
\[
\Omega_X \otimes \O_C \to \Omega_S\otimes \O_{\tilde{C}}
\]
is surjective. 
On the other hand, we have a surjection of sheaves 
\[
\Omega_{\P^3} \otimes \O_C \to \Omega_X\otimes \O_C
\]
and thus the composition 
\[
\Omega_{\P^3} \otimes \O_C \to \Omega_S\otimes \O_{\tilde{C}}
\]
is surjective. 
Applying the snake lemma to the diagram of exact sequences
\[
\xymatrix{
0 \ar[r] & \I_{C/\P^3}\otimes \O_C \ar[r] \ar[d]& \Omega_{\P^3}\otimes \O_C  \ar[r] \ar[d] & \Omega_C \ar[r] \ar@{=}[d] & 0\\
0 \ar[r] & \I_{\tilde{C}/S}\otimes \O_{\tilde{C}}\ar[r] & \Omega_S\otimes \O_{\tilde{C}} \ar[r] & \Omega_{\tilde{C}} \ar[r] & 0,
}
\]
we conclude that $\I_{C/\P^3}\otimes \O_C \to \I_{\tilde{C}/S}\otimes \O_{\tilde{C}}$ is a surjection of bundles. 
Dualizing, we obtain a locally split exact sequence of bundles
\[
0\to \N_{\tilde{C}/S} \to \N_{C/\P^3} \to \N_1 \to 0.
\]
Taking top wedge power, we see that 
\[
\wedge^2 \N_{C/\P^3} = \N_{C/S}\otimes \N_1.
\]
Taking third wedge power of the conormal sequence
\[
0 \to \N_{C/\P^3}^* \to \Omega_{\P^3}\otimes \O_C \to \Omega_C \to 0,
\]
it follows that 
\[
\O_C(-4) = \omega_{\P^3}\otimes \O_C = \wedge^3 (\Omega_{\P^3}\otimes \O_C) =  \wedge^2 \N_{C/\P^3}^* \otimes \omega_C. 
\]
Combining, we arrive at
\[
\N_1 = (\wedge^2 \N_C) \otimes \N_{C/S}^* = \I_{C/S}\otimes \omega_C(4) = \I_{C/S} \otimes \O_S(C)\otimes \omega_S \otimes \O_C(4) = \O_S(4h-c_1)\otimes \O_C. \qedhere
\]
\end{proof}

\section{Projections of curves on scrolls}\label{Section2}
In this section, we use results of \Cref{Section1} to study rational normal scrolls $S(a,b)\subset \P^{a+b+1}$ and their general linear projections $f:S(a,b)\to \P^3$.
We refer the readers to \cite{DE} and \cite[\S V.2]{AG} for basic facts of rational normal scrolls and ruled surfaces. 

\bigskip

Let $a\le b$ be two positive integers and let $\E$ be the rank two bundle $\O_{\P^1}(a)\oplus\O_{\P^1}(b)$ on $\P^1$. 
Then $p:\Proj(\op{Sym} \E)\to \P^1$ is a ruled surface over $\P^1$. 
The surface $S:=\Proj(\op{Sym} \E)$ has a tautological bundle $\O_S(1)$ which is very ample and embeds $S$ into $\P^{a+b+1}$ with image $S(a,b)$. 
The surface $S$ is isomorphic to the Hirzebruch surface $H_e$ where $e := b-a$.
Let $\eta$ denote the unique $(-e)$-curve on $S$ and let $f$ denote a fiber of $p:S\to \P^1$, then the Chow ring of $S$ is given by
\[
A(S) = A(\P^1)[\eta]/(\eta^2-c_1(\E)\eta+c_2(\E)) = \mathbb{Z}[f,\eta]/(\eta^2+(b-a)f\eta,f^2).
\]
In the following, we express the divisors in the coordinates given by the basis $\{\eta,f\}$ whenever convenient and use the shorthand $\O_S(c,d)$ for $\O_S(c\eta+df)$.
In particular, the class of $O_S(1)$ is $h = \eta+bf$.
From the exact sequences
\[
0 \to \T_{S/\P^1} \to \T_S \to \T_{\P^1} \to 0
\]
\[
0 \to \O_S \to (p^*\E^*)(1) \to \T_{S/\P^1} \to 0,
\]
we conclude that
\begin{align*}
c(\T_S)&  = p^* c(\T_{\P^1})c((p^*\E^*)(1)) \\
& = (1+2f)(1+(b-a)f+2\eta+(b-a)f\eta+\eta^2)\\
& = (1+2f)(1+(b-a)f+2\eta)\\
& = 1 + (b-a+2)f+2\eta + 4\eta f.
\end{align*}
Therefore $c_1 = 2\eta+(b-a+2)f$ and $c_2 = 4\eta f$.

\begin{proposition}\label{Count}
Suppose $(a,b)\ne (1,1)$.
Let $f:S\to \P^3$ be a general linear projection of $S(a,b)\subset \P^{a+b+1}$ with image $X$.
Then  
\begin{align*}
\deg N_2 &= \frac{1}{2}(b+a-2)(b+a-1),\\
g(N_2) &= \frac{1}{6}(b+a-3)(b+a-4)(2b+2a-1),\\
\deg N_3 &= \frac{1}{3}(b+a-2)(b+a-3)(b+a-4),\\
\deg R_1 &= 2b+2a-4.
\end{align*}
The curve $N_2$ is integral of degree $\deg N_2$ and genus $g(N_2)$, and the number of singular points of $N_2$ is equal to $\deg N_3$.
The curve $M_2$ is reduced and connected and maps 2-1 to $N_2$, ramified over $2e+4n$ pinch points of $X$.
\end{proposition}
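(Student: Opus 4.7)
My plan is to reduce the numerical claims to the enumerative formulas in \Cref{Formulas}, and to invoke \Cref{Projection} for the qualitative claims. Both only require the intersection numbers $h.h$, $c_1.h$, $c_1.c_1$, $c_2.1$ on $S = S(a,b)$, which I would read off from the Chow ring presentation recalled just above the proposition.

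First I would record the intersection pairings: using $\eta.\eta = (a-b)\eta.f$, $\eta.f = 1$, and $f.f = 0$, direct substitution gives
\[
h.h = a+b, \qquad c_1.h = a+b+2, \qquad c_1.c_1 = 8, \qquad c_2.1 = 4.
\]
Plugging these into parts (b), (d), (e), (f) of \Cref{Formulas} expresses each invariant as a polynomial in $s := a+b$. Each of the four claimed identities then reduces to a mechanical factorization; for instance $\deg[M_3]$ works out to $(s-2)(s-3)(s-4)$, yielding $\deg N_3 = \tfrac{1}{3}(a+b-2)(a+b-3)(a+b-4)$, and the other three formulas follow in the same way.

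For the geometric claims I would invoke \Cref{Projection}. Since $S(a,b)$ is a smooth non-degenerate Hirzebruch surface (hence not the Veronese) in $\P^{a+b+1}$, for $a+b \geq 4$ a general projection to $\P^3$ factors through an isomorphic projection to $\P^5$, so \Cref{Projection} applies directly. This yields integrality of $N_2$ and of $M_2$, the generically $2$-to-$1$ map $M_2 \to N_2$, and the identifications of $\op{Sing}(N_2)$ with $N_3$ and of the pinch points of $X$ with the ramification of $M_2 \to N_2$; combined with the previous step, the numerical counts match $\deg N_3$ and $\deg R_1 = 2a+2b-4$ respectively.

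The main obstacle I anticipate is the boundary case $(a,b) = (1,2)$, where $S(1,2) \subset \P^4$ cannot be projected isomorphically to $\P^5$. Here one must verify the hypotheses of \Cref{Projection} directly for a general projection from $\P^4$: the image is the classical ruled cubic surface, and one needs to confirm that $R_2 = M_4 = N_4 = \varnothing$ and that $f$ is birational, so that the multiple-point machinery of \Cref{MultiplePoint} applies and the qualitative conclusions persist. Once this is in hand, the uniform formulas derived above apply without modification, and the proposition follows.
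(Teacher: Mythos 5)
Your proposal is correct and matches the paper's proof, which is exactly the "straightforward substitution" of $h.h=a+b$, $c_1.h=a+b+2$, $c_1.c_1=8$, $c_2.1=4$ into \Cref{Formulas}, with the qualitative statements imported from \Cref{Projection}. Your extra care about the boundary case $(a,b)=(1,2)$, where $S(1,2)\subset\P^4$ and \Cref{Projection} does not literally apply as stated, is a legitimate point that the paper glosses over.
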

\begin{proof}
This follows from a straightforward substitution of the above calculations into the formulas of \Cref{Formulas} .
\end{proof}

\begin{proposition}\label{ACM}
Let $S = S(a,b)\subset \P^{a+b+1}$, then $H^1(\I_S(l)) = H^2(\I_S(l)) = H^1(\O_S(l)) = 0$ for all $l\in\mathbb{Z}$.
\end{proposition}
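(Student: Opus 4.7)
The plan is to reduce both vanishings to cohomology on $\P^1$ via the ruling $p:S\to \P^1$. Recall that $S=\Proj(\Sym \E)$ with $\E=\O_{\P^1}(a)\oplus\O_{\P^1}(b)$, that $\O_S(h)$ is the tautological bundle, and therefore $p_*\O_S(lh)=\Sym^l\E=\bigoplus_{i+j=l}\O_{\P^1}(ia+jb)$ for $l\ge 0$, while $R^1p_*\O_S(lh)=0$ for $l\ge -1$. From the restriction sequence $0\to\I_S(l)\to\O_{\P^{a+b+1}}(l)\to\O_S(lh)\to 0$ together with the vanishings $H^i(\O_{\P^{a+b+1}}(l))=0$ for $0<i<a+b+1$, the claim $H^2(\I_S(l))=0$ is equivalent to $H^1(\O_S(lh))=0$, and $H^1(\I_S(l))=0$ is equivalent to the surjectivity of the restriction map $H^0(\O_{\P^{a+b+1}}(l))\to H^0(\O_S(lh))$.

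For the vanishing of $H^1(\O_S(lh))$, when $l\ge 0$ the Leray spectral sequence for $p$ degenerates and identifies this group with $H^1(\P^1,\Sym^l\E)=0$, since $\Sym^l\E$ is a direct sum of line bundles of nonnegative degree on $\P^1$ (using $a,b\ge 1$). When $l=-1$, the bundle $\O_S(-h)$ restricts to $\O_{\P^1}(-1)$ on each fiber, so $Rp_*\O_S(-h)=0$ and all cohomology vanishes. When $l\le -2$, Serre duality on the surface $S$ gives $H^1(\O_S(lh))\cong H^1(\omega_S\otimes \O_S(-lh))^\vee$; from $c_1(\T_S)=2\eta+(b-a+2)f$ computed earlier in the section, one reads off $\omega_S=\O_S(-2h+(a+b-2)f)$, and hence $\omega_S\otimes\O_S(-lh)=\O_S((-l-2)h+(a+b-2)f)$. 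Its pushforward $\Sym^{-l-2}\E\otimes\O_{\P^1}(a+b-2)$ is again a sum of nonnegative-degree line bundles on $\P^1$, so its $H^1$ vanishes by the Leray argument above.

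For the surjectivity of the restriction map, the case $l<0$ is trivial as both sides are zero. For $l\ge 0$, the map factors as the natural multiplication $\Sym^l H^0(\E)\to H^0(\Sym^l\E)$, and surjectivity onto each summand $H^0(\O_{\P^1}(ia+jb))$ follows from the standard fact that the multiplication $H^0(\O_{\P^1}(a))^{\otimes i}\otimes H^0(\O_{\P^1}(b))^{\otimes j}\to H^0(\O_{\P^1}(ia+jb))$ on $\P^1$ is surjective.

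Since the proposition is the classical arithmetic Cohen--Macaulay property of rational normal scrolls, no substantive obstacle is expected; the only point requiring a little care is treating the negative twists via Serre duality, as sketched above. An alternative and more structural route would be to invoke the Eagon--Northcott resolution of $\I_S$ by the $2\times 2$ minors of a $2\times(a+b)$ matrix of linear forms, which gives ACM at once, but the fibral computation above is more in keeping with the computational flavor of the rest of the paper.
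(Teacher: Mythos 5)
Your argument is correct, but it takes a genuinely different route from the paper: what you relegate to a closing aside (the Eagon--Northcott resolution of the ideal of $2\times 2$ minors of a $1$-generic $2\times(a+b)$ matrix) is in fact exactly the paper's proof, which deduces $H^1_m(R/I)=H^2_m(R/I)=0$ from the length of that resolution via local duality and then reads off all three vanishings from the local-to-global sequence. Your fibral computation instead splits the statement into two separate claims: $H^1(\O_S(lh))=0$ for all $l$ (handled by Leray and $p_*\O_S(lh)=\Sym^l\E$ for $l\ge 0$, the fiberwise vanishing at $l=-1$, and Serre duality with $\omega_S=\O_S(-2h+(a+b-2)f)$ for $l\le -2$ --- all of which check out, including the identification of $\omega_S$ from $c_1=2\eta+(b-a+2)f$), and surjectivity of $H^0(\O_{\P^{a+b+1}}(l))\to H^0(\O_S(lh))$, which you correctly reduce to surjectivity of the multiplication maps $H^0(\O_{\P^1}(m))\otimes H^0(\O_{\P^1}(n))\to H^0(\O_{\P^1}(m+n))$ summand by summand in $\Sym^l(\O_{\P^1}(a)\oplus\O_{\P^1}(b))$. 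The trade-off: your approach is self-contained and elementary, using only the ruled-surface geometry already set up in the section, but requires three separate cases and an explicit projective-normality argument; the paper's approach is shorter and yields strictly more (the entire minimal free resolution, hence all graded Betti numbers), at the cost of invoking the $1$-genericity of the defining matrix and local duality as black boxes.
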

\begin{proof}
The ideal $I$ of $S(a,b)$ in $\P^{a+b+1}$ is defined by the maximal minors of the matrix
\[
\begin{pmatrix}
x_0 & \dots & x_{a-1} & y_0 & \dots & y_{b-1}\\
x_1 & \dots & x_a & y_1 & \dots & y_b
\end{pmatrix}
\]
in the ring $R:=\C[x_0,\dots, x_a,y_1,\dots, y_b]$.
Since the matrix is $1$-generic, the $R$-module $R/I$ admits a minimal free $R$-resolution given by the Eagon-Northcott complex which is of length $a+b-1$.
In particular, it follows from local duality that 
\[
H^i_m(R/I) = \Ext^{a+b+2-i}_R(R/I, R(-a-b-2))^\vee = 0,\quad i = 1,2.
\]
Here $H^i_m(-)$ denotes the $i$-th local cohomology supported on the irrelevant ideal $m$ of $R$.
The local-to-sheaf exact sequence gives $H^1_m(R/I) \cong \bigoplus_{l\in\mathbb{Z}} H^1(\I_{S}(n))$ and $H^2_m(R/I) \cong \bigoplus_{l\in\mathbb{Z}}H^1(\O_S(l))$.
We conclude that $H^1(\I_S(l)) = H^1(\O_S(l)) = 0$ for all integers $l$.
It also follows that $H^2(\I_S(l)) = 0$ for all $l$ by the short exact sequence 
\[
0 \to \I_S \to \O_{\P^{a+b+1}} \to \O_S\to 0.\qedhere
\]
\end{proof}

\begin{lemma}\label{Natural}
A nontrivial divisor $D = c\eta+df$ on $S(a,b)$ is effective iff $c\ge 0, d\ge 0$.
An effective divisor $D$ has natural cohomology, i.e. $h^1(\O_S(D)) = h^2(\O_S(D)) = 0$, if and only if $d\ge c(b-a)-1$.
\end{lemma}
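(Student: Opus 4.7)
The plan is to handle effectiveness and natural cohomology separately, both by passing to the projection $p\colon S\to \P^1$.

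For effectiveness, the reverse direction is immediate since $c\eta+df$ with $c,d\ge 0$ is a sum of $c$ copies of the $(-e)$-curve $\eta$ and $d$ copies of a fiber $f$. For the forward direction, suppose $D=c\eta+df$ is effective. Intersecting with a general fiber yields $D\cdot f=c\ge 0$. Letting $m$ denote the multiplicity of $\eta$ as a component of $D$, the residual $D':=D-m\eta=(c-m)\eta+df$ is effective and shares no component with $\eta$; since $\eta$ is irreducible, $D'\cdot\eta\ge 0$. Using $\eta^2=-e$ and $\eta\cdot f=1$, this gives $d-(c-m)e\ge 0$, hence $d\ge(c-m)e\ge 0$.

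For the natural cohomology, the strategy is to compute $p_*\O_S(c\eta+df)$ explicitly and then invoke Leray. From the tautological identification $p_*\O_S(1)=\E=\O(a)\oplus\O(b)$ together with $\O_S(1)=\O_S(\eta+bf)$ and the projection formula, one reads off $p_*\O_S(\eta)=\O_{\P^1}\oplus\O_{\P^1}(-e)$. Taking symmetric powers gives
\[
p_*\O_S(c\eta)=\Sym^c\bigl(\O_{\P^1}\oplus\O_{\P^1}(-e)\bigr)=\bigoplus_{k=0}^{c}\O_{\P^1}(-ke),
\]
and tensoring by $\O_{\P^1}(d)$ yields $p_*\O_S(c\eta+df)=\bigoplus_{k=0}^{c}\O_{\P^1}(d-ke)$. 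Since the fibers of $p$ are $\P^1$, one has $R^{j}p_*=0$ for $j\ge 2$, and $R^{1}p_*\O_S(c\eta+df)=0$ whenever $c\ge 0$ by fiberwise vanishing.

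The Leray spectral sequence then collapses to $H^{i}(S,\O_S(D))=H^{i}(\P^1,p_*\O_S(D))$ for $c\ge 0$. Since $\dim\P^1=1$, one gets $h^{2}(\O_S(D))=0$ automatically, so the natural cohomology question reduces to $h^{1}=0$. The latter holds iff each summand $\O_{\P^1}(d-ke)$ has $h^{1}=0$, i.e.\ iff $d-ke\ge -1$ for every $k=0,\dots,c$. As $e=b-a\ge 0$, the binding constraint is $k=c$, giving exactly the condition $d\ge c(b-a)-1$. The only subtle step in this plan is correctly identifying $p_*\O_S(\eta)=\O_{\P^1}\oplus\O_{\P^1}(-e)$ from the paper's normalization $h=\eta+bf$; once this is done the rest is bookkeeping with the Leray spectral sequence.
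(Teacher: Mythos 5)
Your proof is correct and follows essentially the same route as the paper: both compute $p_*\O_S(c\eta+df)=\bigoplus_{k=0}^{c}\O_{\P^1}(d-k(b-a))$ and reduce everything to line bundles on $\P^1$ via Grauert's theorem and the degenerate Leray spectral sequence, with the binding constraint at $k=c$ giving $d\ge c(b-a)-1$. The only difference is cosmetic: you establish $c\ge 0$ and $d\ge 0$ for an effective divisor by intersecting with $f$ and with the residual against $\eta$, while the paper gets $c\ge 0$ geometrically and reads $d\ge 0$ off from $h^0$ of the same pushforward; both are fine.
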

\begin{proof}
Suppose $D$ is effective and $c<0$, then $p(D)$ is a finite set of points on $\P^1$ since $D.f<0$. It follows that $D$ is concentrated on finitely many fibers, and is thus linearly equivalent to $df$ for some $d>0$.
This is a contradiction to $c<0$. 
It follows that effective divisors must have $c\ge 0$. 

Suppose $D = c\eta+df$ is divisor where $c\ge 0$.
Since $D.f\ge 0$, Grauert's theorem implies that $R^ip_*(\L(D)) = 0$ for all $i>0$. 
Theorefore the Lerray spectral sequence degenerates and $H^i(\L(D)) \cong H^i(\pi_* \L(D))$. 
By projection formula, we have
\begin{align*}
h^i(\pi_* \L(D)) & = h^i(\pi_* (\O_S(c)\otimes \pi^*\O_{\P^1}(d))) \\
&= h^i((\Sym^c \E )\otimes \O_{\P^1}(d)) \\
&= \sum_{i = 0}^c h^i(\O_{\P^1}(-i(b-a)+d)).
\end{align*}
It follows that $D$ is effective iff $d\ge 0$.
It also follows that an effective divisor $D$ has natural cohomology iff $d-c(b-a)>-2$.
\end{proof}


%


\begin{theorem}\label{Normal}
Let $C$ be a smooth connected curve on $X$ not supported on $N_2$ and whose proper transform $\tilde{C}$ avoids $R_1$. Suppose $\tilde{C}$ is of class $c\eta+df$, then $h^0(\tilde{N}_{C/\P^3})$ and $h^1(\tilde{N}_{C/\P^3})$ can be computed explicitly in terms of $a,b,c,d$. In particular, if $c \le 3$ or $d<4b$, then $h^1(\N_{C/\P^3}) = 0$ and $C$ corresponds to a smooth point in the Hilbert scheme of curves in $\P^3$.
If $c\ge 4$ and $d\ge c(b-a)-1+4a$, then 
\begin{align*}
h^0(\N_{C/\P^3}) &=  \frac{1}{2}( a c^{2}- b c^{2}+ a c- b c)+c d+6 a+6 b+c+d-3\\
& = \dim |C|+6a+6b-3-4\deg C.\\
h^1(\N_{C/\P^3}) &= \frac{1}{2}(ac^2  - bc^2  - 7ac - bc) + cd + 6a + 6b + c - 3d - 3\\
& = \dim |C|+6a+6b-3.
\end{align*}
\end{theorem}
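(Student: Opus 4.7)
The plan is to take cohomology of the Gruson-Peskine exact sequence
\[
0 \to \N_{\tilde C/S} \to \N_{C/\P^3} \to \O_S(4h-c_1)\otimes \O_{\tilde C} \to 0
\]
established in \Cref{Section1} and reduce everything to cohomology of line bundles on $S = S(a,b)$, which by \Cref{Natural} and \Cref{ACM} is fully explicit through the splitting $p_*\O_S(c\eta + df) = \bigoplus_{i=0}^c \O_{\P^1}(d - i(b-a))$. Writing down the resulting long exact sequence, the problem splits into computing the cohomology of $\N_{\tilde C/S}$ and of $\O_{\tilde C}(4h - c_1)$.

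I would first compute $h^i(\N_{\tilde C/S})$ from $0 \to \O_S \to \O_S(\tilde C) \to \N_{\tilde C/S} \to 0$. The hypothesis $d \ge c(b-a) - 1 + 4a$ implies $d \ge c(b-a) - 1$, so by \Cref{Natural} the class $\tilde C$ has natural cohomology; combined with $H^j(\O_S) = 0$ for $j\ge 1$ from \Cref{ACM}, this yields $h^1(\N_{\tilde C/S}) = 0$ and $h^0(\N_{\tilde C/S}) = h^0(\O_S(\tilde C)) - 1 = cd + c + d + \tfrac{1}{2}(ac^2 - bc^2 + ac - bc)$. For the second factor I would use
\[
0 \to \O_S(4h - c_1 - \tilde C) \to \O_S(4h - c_1) \to \O_{\tilde C}(4h - c_1) \to 0.
\]
The class $4h - c_1 = 2\eta + (3b + a - 2)f$ is effective with natural cohomology, so $h^0(\O_S(4h - c_1)) = 6(a + b) - 3$ and its higher cohomology vanishes. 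The Serre dual of $4h - c_1 - \tilde C = (2-c)\eta + (3b+a-2-d)f$ is $\tilde C - 4h = (c - 4)\eta + (d - 4b)f$, and the condition $d \ge c(b-a) - 1 + 4a$ is precisely $d - 4b \ge (c-4)(b-a) - 1$, i.e.\ exactly what makes $\tilde C - 4h$ effective with natural cohomology. Hence only $h^2(4h - c_1 - \tilde C) = h^0(\tilde C - 4h)$ contributes on the left, and computing the latter by the same pushforward formula yields $h^1(\O_{\tilde C}(4h - c_1)) = \tfrac{1}{2}(ac^2 - bc^2 - 7ac - bc) + cd + 6a + 6b + c - 3d - 3$, together with $h^0(\O_{\tilde C}(4h - c_1)) = 6(a+b) - 3$. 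Adding up according to the Gruson-Peskine long exact sequence gives the two displayed formulas.

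For the vanishing $h^1(\N_{C/\P^3}) = 0$ when $c \le 3$ or $d < 4b$, note that $h^1(\N_{C/\P^3}) \le h^1(\N_{\tilde C/S}) + h^1(\O_{\tilde C}(4h - c_1))$, and the second summand is $h^0(\tilde C - 4h)$ by Serre duality, which vanishes the instant $\tilde C - 4h$ fails to be effective, that is, exactly when $c \le 3$ or $d < 4b$. The main obstacle is controlling the first summand in regimes where the natural-cohomology hypothesis of \Cref{Natural} does not immediately apply to $\tilde C$ itself; this is where one has to inspect the pushforward $p_*\O_S(\tilde C)$ summand by summand on $\P^1$ to verify $H^1(\O_S(\tilde C)) = 0$, using the smoothness and irreducibility of $\tilde C$ (which forces $c\ge 1$ and restricts the possible shapes of $\tilde C$ via its intersection with the $(-e)$-section $\eta$). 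Once these boundary vanishings are in place, the conclusion follows, and the curve is a smooth point of the Hilbert scheme.
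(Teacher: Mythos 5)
Your proposal follows essentially the same route as the paper's proof: both push the Gruson--Peskine sequence through \Cref{Natural} and \Cref{ACM}, identify $h^1(\N_{C/\P^3})$ with $h^0(\O_S(\tilde{C}-4h))$ (you via Serre duality on the restriction sequence for $4h-c_1$, the paper via the same chain of isomorphisms), and differ only in that the paper recovers $h^0(\N_{C/\P^3})$ from $\chi(\N_{C/\P^3})=4\deg C$ while you add the $h^0$'s of the two outer terms directly; the residual issue you flag about $H^1(\O_S(\tilde{C}))$ is handled in the paper by citing $d\ge c(b-a)$ for smooth irreducible curves on $H_e$. Your value $h^0(\N_{C/\P^3})=\dim|C|+6a+6b-3$ agrees with the explicit polynomial and with the paper's own proof, which incidentally shows that the two ``$=\dim|C|+\cdots$'' lines in the theorem statement are swapped: the $-4\deg C$ belongs with $h^1$, not $h^0$.
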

\begin{proof}
Since $\tilde{C}$ is a smooth curve, we have $d\ge c(b-a)$ by \cite[Cor V.2.18]{AG}.
\Cref{Natural} implies that $\O_S(\tilde{C})$ has natural cohomology.
The short exact sequence $0 \to \O_S \to \O_S(\tilde{C}) \to \O_{\tilde{C}}\otimes\O_S(\tilde{C})\to 0$ implies that $H^1(\N_{\tilde{C}/S}) = H^1(\O_{\tilde{C}}\otimes\O_S(\tilde{C}))=0$ since $H^1(\O_S) =H^2(\O_S) = 0$.
The short exact sequence from \Cref{Normal} $0 \to \N_{\tilde{C}/S} \to \N_{C/\P^3} \to \O_S(4h-c_1)\otimes \O_{\tilde{C}}\to 0$
thus implies that $H^1(\N_{C/\P^3}) = H^1(\O_S(4h-c_1)\otimes \O_{\tilde{C}})$ since the sheaves are supported on a curve.
Finally, consider the short exact sequence
\[
0 \to \I_{\tilde{C}/S}(4h-c_1) \to \O_S(4h-c_1) \to \O_S(4h-c_1)\otimes \O_{\tilde{C}} \to 0.
\]
The divisor $4h-c_1$ has coordinates $(2,3b+a-2)$ in the basis $\{\eta,f\}$, which is effective with natural cohomology by \Cref{Natural}.
We conclude that 
\[
H^1(\N_{C/\P^3}) \cong H^1(\O_S(4h-c_1)\otimes \O_{\tilde{C}} ) \cong  H^2(\I_{\tilde{C}/S}(4h-c_1)) \cong H^0(\O_S(C-4h))^\vee.
\]
If either $c<4$ or $d<4b$, then $h^0(\O_S(C-4h)) = 0$.
If $c\ge 4$, then 
\[
h^0(\O_S(C-4h)) = h^0(\O_S(c-4,d-4b))= \sum_{i = 0}^{c-4} h^0(\O_{\P^1}(-i(b-a)+d-4b)).
\]
Since $\chi(\N_{C/\P^3}) = 4\deg C$, it follows that 
\[
h^0(\N_{C/\P^3}) = 4(d+ac)+\sum_{i = 0}^{c-4} h^0(\O_{\P^1}(-i(b-a)+d-4b)).
\]
If $d\ge c(b-a)-1+4a$, then $\O_S(c-4,d-4b)$ is effective with natural cohomology by \Cref{Natural}.
We apply Riemann-Roch formula on $S$ to obtain
\[
h^1(\N_{C/\P^3}) = \chi(\O_S(C-4h))= \frac{1}{2}(ac^2  - bc^2  - 7ac - bc) + cd + 6a + 6b + c - 3d - 3.\]
Recall that the intersection formula states that
\[
(-C).4h = \chi(\O_S)-\chi(\O_S(C))-\chi(-4h)+\chi(\O_S(C-4h)).
\]
It follows that $h^1(\N_{C/\P^3}) = \dim |C|+6a+6b-3- 4\deg C$ and $h^0(\N_{C/\P^3}) = \dim |C| + 6a+6b-3$.
\end{proof}

The dimension of the family of surfaces $X$ obtained from linear projections of $S(a,b)\subset \P^{a+b+1}$ into $\P^3$ is given by
\[
\dim \op{\mathbb{G}r}(a+b-3,a+b+1) + \dim \op{PGL}(3,\mathbb{C})- \dim \op{Aut}(S(a,b)).
\] 
Recall that there is an exact sequence of groups for the Hirzebruch surface $H_e$
\[
0 \to H^0(\O_{\P^1}(e))\rtimes \C^* \to \op{Aut}(H_e) \to \op{PGL}(2,\C)\to 0.
\]
In particular, we deduce that $\dim \op{Aut}(S(a,b)) = b-a+5$.
It follows that there is a $(5a+3b+2)$-dimension family of integral surfaces of degree $a+b$ in $\P^3$ arising as general linear projections of the scroll $S(a,b)\subset\P^{a+b+1}$.
If we vary the curve $\tilde{C}$ in the linear system as well, we end up with a family of curves in $\P^3$ of dimension $\dim |C|+5a+3b+2$.
Suppose linear system $\tilde{C} = c\eta+df$ satisfies $c\ge 4$ and $d\ge c(b-a)-1+4a$, then the difference between $h^0(\N_{C/\P^3})$ and the dimension of the family of the curves is $a+3b-5$, which does not depend on the class of $\tilde{C}$.
There are two possibilities in this situation.
Either the family of curves are not dense in the component of the Hilbert scheme they belong to, or there is a highly nonreduced component of the Hilbert scheme whose general member is given by such a curve.
We are not able to determine which is the case.

\subsection{Maximal rank curves on the ruled cubic surface}\

In this subsection we continue the study of curves on a ruled cubic surface initiated by Hartshorne in \cite{H1}. 

\bigskip

Consider the general projection of $S(1,2)\subset\P^4$ into $\P^3$. 
By Proposition \ref{Count}, the image surface $X$ is a ruled cubic surface with singularity a double line $N_2$ that has $2$ pinch points on it.
The map $\pi:S\to X$ is an isomorphism away from the conic $M_2$ and the line $N_2$, and maps $M_2$ generically 2-1 to $N_2$ branched over the two pinch points of $X$. 

\bigskip

Since all integral ruled cubic surfaces that are not cones all differ by a coordinate change in $\P^3$ \cite{GP}, we can afford to work explicitly.
In doing so, we verify the formulas and see the geometry more clearly.
Let $S$ be the blowup of $\P^2 = \Proj \C[s,t,u]$ at the point $O = V(s,t)$, embedded by the complete linear system $h$ of proper transforms of conics passing through $O$. Then $S$ is parametrized as $[tu:t^2:st:su:s^2]$, and its defining equations in $\P^4 = \Proj \C[x,y,z,w,v]$ are given by the $2\times 2$-minors of the matrix 
\[
M = \begin{bmatrix}x & y & z\\ w& z & v\end{bmatrix}.
\]
Let $\pi:S\to \Proj \C[x,y,w,v]$ be the projection of $S$ away from the point $V(x,y,w,v)$. The image $X$ is parametrized by $[tu:t^2:su:s^2]$ and has the defining equation $x^2v-y^2w$. The double locus $M_2$ on $S$ is defined by $V(x,w) \cap S = [0:t^2:st:0:t^2]$ which maps 2-1 to the line $N_2 = V(x,w) = [0:t^2:0:s^2]$. 
Since $X$ is the image of the ruled surface $S$, $X$ itself is spanned by lines and thus the name ruled cubic surface. 
Let $\sigma:M_2\to N_2$ be the involution defined by 
\[
[0:t^2:st:0:t^2] \mapsto [0:t^2:-st:0:t^2].
\] 
The locus $R_1$ consists of the two ramification points of $M_2\to N_2$, which are $V(x,z,w,v)$ and $V(x,y,z,w)$. 

\bigskip

By \cite[Thm 4.1, 4.5]{HP}, there is an exact sequence of groups 
\[
0 \to \APic(X) \to \Pic(S) \oplus \Cart M_2/\pi^*\Cart N_2 \to \underbrace{\Pic M_2 / \pi^*\Pic N_2}_{ \mathbb{Z}/2\mathbb{Z}} \to 0.
\]
In this way, we describe an almost Cartier divisor on $X$ by a triple $(c,d, \alpha)$, where $(c,d)$ is the class $c\eta+df$ in $\Pic(S)$ as before and $\alpha$ is a Cartier divisor of $M_2$, subject to the constraint that $d \equiv \deg \alpha \op{mod} 2$. 

\begin{definition}
For any divisor $\alpha\in \Cart M_2$, we set $\overline{\alpha} \in \Cart M_2$ to be the effective divisor of least degree such that $\alpha \equiv \overline{\alpha} \op{mod} \pi^* \Cart N_2$. More explicitly, if $\alpha =\sum_i P_i - \sum_j Q_j$ where $P_i\ne Q_j$, then we remove all the pairs $P+\sigma(P)$ from $\sum_i P_i + \sum_j \sigma(Q_j)$ to obtain $\overline{\alpha}$.
\end{definition}
 
\begin{proposition}[Hartshorne {\cite[Prop 6.5]{H1}}]
A class $(c,d,\alpha) \in \APic(X)$ is effective if and only if one of the following is true:
\begin{enumerate}
    \item $c>0, d>0$, or
    \item $d = \alpha = 0, c>0$, or
    \item $c=0$, $d > 0$ and $\deg \overline{\alpha} \le d$. 
\end{enumerate}
\end{proposition}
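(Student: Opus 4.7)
The plan is to prove both directions of the biconditional by pulling back to the smooth surface $S = H_1$ via $\pi^*$ and applying the earlier \Cref{Natural} on effective divisors on $S$.

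For necessity, let $D$ be an effective almost Cartier divisor on $X$ with class $(c, d, \alpha)$, and set $\tilde D := \pi^* D$. Then $\tilde D$ is an effective Cartier divisor on $S$ of class $c\eta + df$, so \Cref{Natural} forces $c \ge 0$ and $d \ge 0$. When $d = 0$, the linear system $|c\eta|$ is a single point since $h^0(\O_S(c\eta)) = 1$, so $\tilde D = c\eta$; this is disjoint from $M_2$ because $\eta \cdot M_2 = 0$, which forces $\alpha = 0$ and gives case (2). When $d > 0$, the intersection $\tilde D \cap M_2$ is an effective Cartier divisor of degree $(c\eta + df)\cdot M_2 = d$ on $M_2$ representing $\alpha$ modulo $\pi^* \Cart N_2$, so $\deg \overline\alpha \le d$; this gives case (3) if $c = 0$ and case (1) if $c > 0$.

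For sufficiency, I will build an explicit effective $\tilde D$ on $S$ from two building blocks: the $(-1)$-section $\eta$, which is disjoint from $M_2$ and so contributes nothing to intersections with $M_2$, and the fibers $f$ of $p: S \to \P^1$, each of which meets $M_2$ transversely at a single point that may be chosen freely. In case (2), take $\tilde D = c\eta$; its image $\pi(c\eta)$ is $c$ times the directrix line on $X$, a curve disjoint from $N_2$ and hence almost Cartier. In case (3), first produce an effective representative $\alpha'$ of $\alpha$ of degree exactly $d$ via $\alpha' = \overline\alpha + \sum_{j=1}^k (R_j + \sigma(R_j))$, where $R_j + \sigma(R_j) = \pi^* Q_j$ for $k = (d - \deg \overline\alpha)/2$ distinct non-pinch points $Q_j \in N_2$; the parity condition $\deg \alpha \equiv d \pmod 2$, combined with $\deg \overline \alpha \le d$, makes $k$ a non-negative integer. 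Then pick the $d$ fibers $f_1, \dots, f_d$ whose intersections with $M_2$ recover the $d$ points of $\alpha'$, and set $\tilde D = \sum_j f_j$. In case (1), combine: set $\tilde D = c\eta + \sum_j f_j$, which is effective of class $c\eta + df$ and still satisfies $\tilde D \cap M_2 = \alpha'$ since $\eta$ contributes nothing.

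The main step requiring care is verifying that each constructed $\tilde D$ descends to an effective almost Cartier divisor $\pi(\tilde D)$ on $X$ whose class in $\APic(X)$ is exactly $(c, d, \alpha)$. Since our $\tilde D$ is never supported on $M_2$, the pushforward $\pi(\tilde D)$ is not supported on any component of $N_2$, so by the description of $\varphi$ reviewed in the subsection on generalized divisors one has $\varphi(\pi(\tilde D)) = [\tilde D \cap M_2] = [\alpha']$ in $\Cart M_2/\pi^*\Cart N_2$, which equals $[\alpha]$ by the choice of $\alpha'$; meanwhile $\pi^* \pi(\tilde D) = \tilde D$ in $\Pic(S)$, pinning down the $(c,d)$ part. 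Together these identify the class of $\pi(\tilde D)$ as $(c, d, \alpha)$, completing the sufficiency direction.
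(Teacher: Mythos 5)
The paper offers no proof of this statement --- it is quoted directly from Hartshorne \cite{H1}, Prop.~6.5 --- so your argument can only be judged on its own terms, and it has a genuine gap; indeed it proves something that contradicts case (1) of the proposition. In your necessity direction you assert that whenever $d>0$ the intersection $\pi^*D\cap M_2$ is an effective Cartier divisor of degree $d$ on $M_2$ representing $\alpha$, hence $\deg\overline{\alpha}\le d$. If that were correct, case (1) would have to carry the extra hypothesis $\deg\overline{\alpha}\le d$, which it does not. The step fails because an effective almost Cartier divisor $D$ may have components supported on the double line $N_2$ (even though $N_2$ itself is not almost Cartier): for such $D$ the pullback $\pi^*D$ contains $M_2$ as a component, so $\pi^*D\cap M_2$ is not a Cartier divisor on $M_2$ and does not compute $\varphi(D)$, which must instead be evaluated via the difference construction. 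Concretely, every double structure $D$ on $N_2$ in $\P^3$ lies on $X$ (since $\I_{X}\subset\I_{N_2}^2$), is a degree-$2$ curve without embedded points, and is generically principal along $N_2$ (its ideal at the generic point of $N_2$ is generated by the single quadric cutting it out there); hence it is an effective almost Cartier divisor with $\pi^*D=M_2$, of class $(1,1,\beta)$. For the non-planar double structures, $\deg\overline{\beta}$ is an arbitrary odd integer $\ge 3$: in the explicit model, the smooth quadric $V(xy-vw)\supset N_2$ meets $X$ in a double structure on $N_2$ plus the directrix plus three rulings $F_1,F_2,F_3$, so that double structure has class $2[H]-[\pi(\eta)]-\sum_i[F_i]=(1,1,-(P_1+P_2+P_3))$ with $\deg\overline{\beta}=3>1=d$.

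The same oversight breaks your sufficiency argument for case (1): the construction $\tilde D=c\eta+\sum_j f_j$ requires an effective representative $\alpha'$ of degree exactly $d$, which exists only when $\deg\overline{\alpha}\le d$, so you never realize the classes in case (1) with $\deg\overline{\alpha}>d$ --- and these are exactly the classes whose effectivity forces one to use curves supported on $N_2$, which is why case (1), unlike case (3), has no condition on $\alpha$. Your treatment of cases (2) and (3) is essentially sound: when $c=0$ no component of $D$ can be supported on $N_2$, since such a component would contribute a positive multiple of $[M_2]=\eta+f$ to $\pi^*D$ and force $c\ge 1$, and then your fiber-line construction works. A complete proof must classify the effective almost Cartier divisors supported on $N_2$ (the multiplicity structures on the double line) and include them among the building blocks; this is how Hartshorne's original argument handles case (1).
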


Note that the basis $\{f+\eta,-\eta\}$ instead of $\{\eta,f\}$ was used for $\Pic(S)$ in \cite{H1}.

\bigskip

With the effectiveness criterion in hand, it is relatively easy to classify classes that contain a preserved curve on $X$. 

\begin{proposition}
A class $(c,d,\alpha)$ contains a preserved curve if and only if one of the following is true:
\begin{enumerate}
    \item $c >0, d>0$ and $d = \deg \overline{\alpha}$, or
    \item $d = \alpha = 0, c>0$, or
    \item $c = 0, d>0$ and $d = \deg \overline{\alpha}$.
\end{enumerate}
\end{proposition}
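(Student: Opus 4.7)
The plan is to combine the characterization of preserved curves from the previous subsection (a curve $C$ not supported on $N_2$ is preserved if and only if $C\cap N_2$ is a Cartier divisor on $C$, equivalently iff the proper transform satisfies $\tilde{C}\cong C$) with the effectiveness criterion of the preceding proposition. The key geometric observation is that $C\cap N_2$ is Cartier on $C$ precisely when $\tilde{C}\cap M_2$ contains no pair $P+\sigma(P)$, where $\sigma$ is the involution of the degree-two cover $\pi|_{M_2}:M_2\to N_2$ (the presence of such a pair produces a node of $C$ at the common image, where the intersection with $N_2$ fails to be locally principal). Equivalently, $\tilde{C}\cap M_2$ equals the canonical minimal representative $\overline{\alpha}$ of its class in $\Cart M_2/\pi^*\Cart N_2$. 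Since $\tilde{C}.M_2=(c\eta+df).(\eta+f)=d$ for $\tilde{C}$ of class $c\eta+df$, preservation translates into the numerical condition $d=\deg\overline{\alpha}$.

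Forward direction: if $(c,d,\alpha)$ contains a preserved curve $C$, then the class is effective and satisfies one of (i)--(iii) of the previous proposition. Irreducibility of the conic $M_2\in|\eta+f|$ (see \Cref{Count}) together with the earlier proposition of this subsection forces $C$ not to be supported on $N_2$, so $\tilde{C}$ is a genuine Cartier divisor of class $c\eta+df$ whose intersection with $M_2$ is an effective divisor of degree $d$ representing $\alpha$. The geometric observation above yields $d=\deg\overline{\alpha}$. Combining with the three effectiveness cases produces exactly conditions (1)--(3).

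Backward direction: in each case I would construct an explicit $\tilde{C}$ on $S$ of class $c\eta+df$ with $\tilde{C}\cap M_2=\overline{\alpha}$, and then verify that $C=\pi(\tilde{C})$ is a preserved almost Cartier curve of class $(c,d,\alpha)$. Case (2) is immediate with $\tilde{C}=c\eta$, which misses $M_2$ since $\eta.M_2=0$. Case (3) is handled directly: take $\tilde{C}$ to be the union of the $d$ ruling fibers through the points of $\overline{\alpha}$, using that $M_2.f=1$ makes $M_2\to\P^1$ an isomorphism of the ruling. For case (1), when $c\le d+1$ the short exact sequence
\[
0 \to \O_S((c-1)\eta+(d-1)f) \to \O_S(c\eta+df) \to \O_{M_2}(d) \to 0
\]
has vanishing $H^1$ of the first term by \Cref{Natural} (since $d-1\ge c-2$), so the restriction is surjective onto $H^0(\O_{M_2}(d))$ and one may pick a section restricting to the prescribed effective divisor $\overline{\alpha}\in|\O_{M_2}(d)|$. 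When $c>d+1$, the class has $\eta$ as a fixed component of multiplicity $c-d$ that does not meet $M_2$; peeling it off and running the same argument with the moving system $|d\eta+df|$ finishes the case.

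In each construction, $\pi|_{\tilde{C}}:\tilde{C}\to C$ is a finite birational morphism that is injective on closed points (because $\tilde{C}\cap M_2=\overline{\alpha}$ has no $\sigma$-pair), hence an isomorphism by Zariski's main theorem, so $C$ is preserved; and the class identification follows from $\pi^*[C]=c\eta+df$ together with $\varphi[C]=[\tilde{C}\cap M_2]=\alpha$, by the description of $\varphi$ recalled in the background section. The main obstacle is surjectivity of the restriction map to $M_2$ in case (1) when $c$ is much larger than $d+1$, where naive application of \Cref{Natural} fails; the resolution is the fixed-component peel-off trick described above, which reduces to the moving system $|d\eta+df|$ where the required vanishing does hold.
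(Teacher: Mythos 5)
Your proof is correct and, in substance, is the paper's argument. The necessity direction is identical: a preserved curve embeds in $\P^3$ via the projection, so $\tilde{C}\cap M_2$ can contain no pair $P+\sigma(P)$ (including a doubled point at either ramification point of $M_2\to N_2$), which is exactly the condition $\deg\overline{\alpha}=\deg\alpha=d$; intersecting with the effectiveness criterion yields (1)--(3). Your constructions for cases (2) and (3) --- a multiple of $\eta$, which misses $M_2$, and the $d$ rulings through the points of $\overline{\alpha}$ --- coincide with the paper's. The only genuine divergence is case (1), where the paper simply takes the union of the curves from (2) and (3), of class $(c,0,0)+(0,d,\overline{\alpha})=(c,d,\alpha)$; this sidesteps your surjectivity analysis and the peel-off needed when $c>d+1$, though your cohomological route is also valid and has the mild advantage of producing members of the full linear system rather than a very degenerate union. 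One caution: ``finite, birational and injective on closed points, hence an isomorphism by Zariski's main theorem'' is false as a general principle (the normalization of a cuspidal cubic is a counterexample). What you actually need --- and what your construction does supply, since $\overline{\alpha}$ forces multiplicity one at the two points of $R_1$ --- is that $\pi|_{\tilde{C}}$ is in addition unramified at $\tilde{C}\cap R_1$, i.e.\ that $\tilde{C}$ is transverse to $M_2$ there so its tangent avoids the kernel of $d\pi$; this is precisely the tangent-vector argument the paper makes explicit in its necessity direction, and it should be quoted in your sufficiency step as well.
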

\begin{proof}
For (3), take $c$ disjoint points on $M_2$ that contain no pairs of involution points, then the sum of the $c$-fibers through them will be preserved. 
For (2) take a multiple structure on the $(-1)$-curve, which does not intersect $M_2$ and is therefore preserved. 
For (1) take the union of curves in (2) and (3).

Now we argue that (1)-(3) is necessary. 
Suppose $D$ is a preserved curve, then the linear system corresponding to the projection separates points and tangent vectors on $\tilde{D}$. 
Thus $\tilde{D}$ cannot meet pairs of involution points $P_i\ne \sigma(P_i)$.
If $\tilde{D}$ meets a point $Q$ in the branch locus $R_i$, then it must meet it with multiplicity one. 
Otherwise the tangent space of $\tilde{D}$ will contain the tangent line of $\Gamma$ at $Q$, which is the line $\overline{OQ}$ where $O$ is the point of projection.
In this case the linear system $H$ would fail to separate tangent vectors of $\tilde{D}$ at $Q$. 
Taking $\alpha = \tilde{D}\cap M_2$, it is clear that $\deg \overline{\alpha} = \deg \alpha = d$.
\end{proof}

\begin{theorem}
Every smooth connected curve $C$ on $X$ is linked to a preserved curve $D$ except for the line $N_2$ and the $(-1)$-curve $\eta$. 
\end{theorem}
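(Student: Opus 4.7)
The plan is to produce, for each admissible smooth connected $C$, an explicit preserved curve $D$ in the linkage class predicted by \Cref{unique}(1), and then verify linkage by \Cref{linkage}. Writing $\tilde{C}\sim c\eta+df$ on $S=H_1$ and using $M_2=\eta+f$ (from \Cref{Formulas}), one computes $\tilde{C}\cdot M_2=d$, so the forced linkage integer is $m=d$. The two exclusions are then immediate: the reduced line $N_2$ is the only smooth irreducible curve supported on the singular locus and is not almost Cartier (as noted after \Cref{degree}), hence lies outside the framework; and $\pi(\eta)$ has $d=0$, so no positive $m$ can link it to a preserved curve by \Cref{unique}(1).

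For any other $C$, preservedness follows from the proposition asserting that smooth curves on $X$ not supported on $N_2$ are preserved, and $\tilde{C}\cong C$ is a smooth connected divisor on $H_1$. The standard classification of smooth irreducible divisor classes on $H_1$ forces $d\ge c$ once $\tilde{C}\ne\eta$, so $d-c\ge 0$ and $d\ge 1$. Let $\alpha:=\tilde{C}\cap M_2\in\Cart M_2$, a divisor of degree $d$. Since $\pi|_{\tilde{C}}$ is injective it cannot collapse an involution pair $\{P,\sigma(P)\}\subset M_2$, so $\alpha$ contains no such pair and hence $\deg\bar{\alpha}=d$.

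The candidate linked divisor is then the class $\beta:=(d-c,\,d,\,\sigma(\alpha))\in\APic(X)$. Because $d>0$, $d-c\ge 0$, and $\deg\overline{\sigma(\alpha)}=d$, the proposition classifying almost Cartier classes that contain a preserved curve applies (condition (1) when $d>c$, condition (3) when $d=c$) and produces a preserved $D$ representing $\beta$. Linkage of $C$ and $D$ by $\O_X(d)$ is then immediate from \Cref{linkage}: one has $[\tilde{C}]+[\tilde{D}]=d(\eta+2f)=dh$, and in $\Cart M_2/\pi^*\Cart N_2$ the sum $\alpha+\sigma(\alpha)=\pi^*\pi_*\alpha$ visibly vanishes. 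The only step with genuine content is the observation that preservedness of $C$ forces $\alpha$ to avoid involution pairs, which is exactly what makes $\beta$ satisfy the preserved-class hypothesis; the remainder is bookkeeping, and I do not anticipate a serious obstacle.
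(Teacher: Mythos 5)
Your proposal is correct and follows essentially the same route as the paper: exclude $N_2$ (not almost Cartier, so none of its links can be preserved) and $\eta$ (the forced linkage degree is $m=d=0$), and for every other smooth connected $C$ exhibit the residual class $dH-C=(d-c,d,\sigma(\alpha))$ as one containing a preserved curve via the preceding classification proposition. The one incomplete step is your justification of $\deg\overline{\alpha}=d$: the reduction $\alpha\mapsto\overline{\alpha}$ also removes $Q+\sigma(Q)=2Q$ at the fixed points $Q\in R_1$ of the involution, so injectivity of $\pi|_{\tilde{C}}$ alone does not rule out $\tilde{C}$ meeting $M_2$ doubly at a ramification point; this is handled either by the tangent-vector separation argument at pinch points or simply by citing the necessity direction of the preceding proposition, which already asserts $\deg\overline{\alpha}=d$ for any class containing a preserved curve.
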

\begin{proof}
Every link of $N_2$ is not almost Cartier, and thus must be supported on $N_2$.
In particular, no such curve can be preserved. 
Let $C$ be a smooth irreducible curve of the class $(c,d,\alpha)$. 
Then either (1) $c>0,d>0, \deg \overline{\alpha} =d$, or (2) $C$ is the exceptional curve where $d = \alpha = 0$ and $c = 1$, or (3) $C$ is a ruling where $c = 0, d>0$ and $\deg \overline{\alpha} = d$. 
The linked divisor $D = dH-C = (d-c,d,\sigma(\alpha))$ contains a preserved curve for case (1) and (3) by the previous proposition.  
\end{proof}

More explicitly, let $C \ne N_2$ be a smooth connected curve in the class $(c,d,\alpha)$. 
If $C$ is not $E$ then $d>0$. 
Its proper transform $\tilde{C}$ meets $M_2$ at $d$ points $\alpha$ on $M_2$ containing no pairs of involution points. 
The preserved curve $D$ can be constructed using the sum of $d$-fibers passing through $\sigma(\alpha)$ and a $(d-c)$-multiple of the exceptional curve $\eta$ which does not meet $M_2$.

\begin{definition}
A closed subscheme $V$ of $\P^n$ is said to have maximal rank if for every $d\ge 0$ the map
\[
H^0(\O_{\P^n}(d)) \to H^0(\O_V(d))
\]
has maximal rank, i.e. either injective or surjective.  
\end{definition}

Note that having maximal rank is the same as having either $H^0(\I_V(d)) = 0$ or $H^1(\I_V(d)) = 0$ for every $d\ge 0$.
Examples of maximal rank varieties include ACM (arithmetic Cohen-Macaulay) curves $C$, which are characterized by the vanishing of $H^1(\I_C(d))$ for all $d$.


\begin{proposition}
Let $C$ be a curve of class $(c,d)$ on $S(1,2)\subset \P^4$, then 
\begin{align*}
& h^1(\I_{C/\P^4}(c-1)) = 0,\\
& h^1(\I_{C/\P^4}(n)) = \sum_{i = 0}^{n-c}h^0(\O_{\P^1}(d+i-2n-2)),\quad  \forall n\ge c, \tag{A}\label{A}\\
& h^1(\I_{C/\P^4}(n)) = \sum_{i =0}^{c-n-2}h^0(\O_{\P^1}(2n+i+1-d)), \quad \forall n\le c-2\tag{B}\label{B}.
\end{align*}
\end{proposition}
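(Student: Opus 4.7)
The plan is to transfer the question from $\P^4$ to $S = S(1,2)$ using that $S$ is arithmetically Cohen--Macaulay (\Cref{ACM}), and then to compute everything via the Leray spectral sequence for the ruling $p \colon S \to \P^1$. From the short exact sequence
$$0 \to \I_{S/\P^4}(n) \to \I_{C/\P^4}(n) \to \I_{C/S}(nh) \to 0$$
and the vanishings $h^1(\I_{S/\P^4}(n)) = h^2(\I_{S/\P^4}(n)) = 0$ from \Cref{ACM}, I would conclude
$$h^1(\I_{C/\P^4}(n)) = h^1(\O_S(nh - C)) = h^1(\O_S(n-c,\, 2n-d))$$
using $h = \eta + 2f$.

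The three assertions then correspond to the three possibilities for the sign of $n-c$, which controls the position of $\O_S(n-c, 2n-d)$ relative to the effective cone described in \Cref{Natural}. For $n \ge c$, I would apply the projection formula via $p$, exactly as in the proof of \Cref{Natural}, to write
$$h^1(\O_S(n-c,\, 2n-d)) = \sum_{i=0}^{n-c} h^1(\O_{\P^1}(2n-d-i)),$$
and then Serre dualize on $\P^1$ to rewrite each summand as $h^0(\O_{\P^1}(d+i-2n-2))$, giving formula~(\ref{A}). For $n \le c-2$, the first coordinate $n-c$ is strictly less than $-1$, so I would first apply Serre duality on $S$ with canonical class $K_S = -2\eta - 3f$ (using $c_1 = 2\eta + 3f$ from the preamble of this section) to replace the bundle with $\O_S(c-n-2,\, d-2n-3)$, whose first coordinate is now non-negative. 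A second application of the projection formula and Serre duality on $\P^1$ then yields formula~(\ref{B}).

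The boundary case $n = c-1$ falls in neither regime and requires a separate (but short) geometric argument, since the bundle $\O_S(-1,\, 2c-2-d)$ has first coordinate exactly $-1$, so it is not covered by \Cref{Natural} and its Serre dual $\O_S(-1,\, d-2c-1)$ is not covered either. However, since $\eta \cdot f = 1$ and $f^2 = 0$, this bundle restricts to $\O_{\P^1}(-1)$ on every fiber of $p$; hence $R^i p_* \O_S(-1,\, 2c-2-d) = 0$ for $i = 0, 1$, and the Leray spectral sequence forces $h^1(\O_S(-1,\, 2c-2-d)) = 0$. I do not anticipate any real obstacle beyond careful bookkeeping of the Serre-duality shifts on $S$ and on $\P^1$ in cases~(\ref{A}) and~(\ref{B}); the only conceptually distinct step is the boundary $n = c-1$, which is handled by observing that the relevant line bundle has degree $-1$ on every ruling.
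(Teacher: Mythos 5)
Your proposal is correct and follows essentially the same route as the paper: reduce to $h^1(\I_{C/\P^4}(n)) = h^1(\I_{C/S}(n)) = h^1(\O_S(n-c,\,2n-d))$ via \Cref{ACM}, then use the projection formula along the ruling together with Serre duality on $\P^1$ when $n\ge c$, and apply Serre duality on $S$ first when $n\le c-2$. The only divergence is the boundary case $n=c-1$: the paper notes that neither $(c-1)h-C$ nor its Serre dual $C-(c-1)h-c_1$ is effective and then computes $\chi=0$ by Riemann--Roch, whereas you observe that the relevant line bundle has degree $-1$ on every fiber so that both direct images under $p$ vanish --- an equally valid (and arguably cleaner) argument for the same vanishing.
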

\begin{proof}
First we note that there is a short exact sequence
\[
0 \to \I_{S/\P^4} \to \I_{C/\P^4} \to j_*\I_{C/S} \to 0
\]
where $j:S\hookrightarrow \P^4$ is the inclusion.
Since $H^1(\I_{S/\P^4}(n)) = H^2(\I_{S/\P^4}(n)) = 0$ for all $n$ by \Cref{ACM}, we conclude that $H^1(\I_{C/\P^4}(n) \cong H^1(\I_{C/S}(n))$ for all $n$. 
Neither $(c-1)h-C$ nor $C-(c-1)h-c_1$ are effective since their first coordinates are $-1$. 
It follows that 
\begin{align*}
h^1(\I_{C/S}(c-1)) &= h^1(\O_S((c-1)h-C) \\
&= -\chi(\O_S((c-1)h-C)) \\
&= \frac{1}{2} ((c-1)h-C).((c-1)h-C+c_1)+1 = 0.
\end{align*}
If $n\ge c$, then the first coordinate of $nh-C$ is nonnegative and thus $h^1(\I_C(n))$ is given by
\[
h^1(\O_S(n-c,2n-d))= \sum_{i = 0}^{n-c}h^1(\O_{\P^1}(2n-d-i)) = \sum_{i = 0}^{n-c}h^0(\O_{\P^1}(d+i-2n-2))
\]
using the same computations in \Cref{Natural}.
If $n\le c-2$, then the first coordinate of $C-nh-c_1$ is nonnegative and thus
\begin{align*}
 h^1(\I_{C/S}(n)) & = h^1(\O_S(nh-C))\\
 & = h^1(\O_S(C-nh-c_1))\\
 & = h^1(\O_S(c-n-2,d-2n-3))\\
 & =  \sum_{i = 0}^{c-n-2} h^1(\O_{\P^1}(d-2n-3-i)) \\
 & = \sum_{i =0}^{c-n-2}h^0(\O_{\P^1}(2n+i+1-d)).\qedhere
\end{align*}
\end{proof}

\begin{corollary}
Let $C$ be a curve of class $(c,d)$ on $S(1,2)$. 
\begin{enumerate}
\item If $d\ge 2c+2$, then $h^1(\I_{C/\P^4}(n))$ is nonzero and strictly decreasing on the interval $[c,d-c-2]$ with value (\ref{A}) and vanishes elsewhere.
\item If $b<2c+2$, then $h^1(\I_{C/\P^4}(n))$ is nonzero and strictly increasing on the interval $[d-c+1,c-2]$ with value (\ref{B}) and vanishes elsewhere.
\end{enumerate}
In particular, $C$ is ACM iff $2c-2\le d\le 2c+1$.
\end{corollary}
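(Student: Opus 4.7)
The plan is to read off the support and monotonicity of $h^1(\I_{C/\P^4}(n))$ directly from the closed-form sums (A) and (B) in the preceding proposition, and then obtain the ACM criterion as the special case where both ranges of nonvanishing are empty.

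By the previous proposition, $h^1(\I_{C/\P^4}(c-1)) = 0$, while (A) gives the value for $n \ge c$ and (B) for $n \le c-2$. In each formula the summands are of the form $h^0(\O_{\P^1}(k_j))$ where the twists $k_j$ form an arithmetic progression in the summation index $j$. Since $h^0(\O_{\P^1}(k)) = \max(0, k+1)$, the sum vanishes iff its largest-twist summand does. In (A) the largest twist occurs at $j = n-c$ and equals $d-n-c-2$, so (A) is nonzero iff $n \le d-c-2$; combined with $n \ge c$ this gives support $[c, d-c-2]$, nonempty precisely when $d \ge 2c+2$. A symmetric analysis of (B), whose largest-twist summand at $j = c-n-2$ equals $n+c-1-d$, shows (B) is nonzero precisely on $[d-c+1, c-2]$, nonempty iff $d \le 2c-3$. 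These two ranges cannot be simultaneously nonempty, so for given $(c,d)$ at most one of (A) and (B) contributes.

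For the strict-monotonicity assertion in case (1), write $T_n$ for the value of (A) and compare $T_n$ with $T_{n+1}$. Passing from $n$ to $n+1$ decreases each existing summand's twist by $2$ and appends one new summand of the smallest twist $d-n-c-3$. Using that the pointwise difference $h^0(\O_{\P^1}(k))-h^0(\O_{\P^1}(k-2))$ equals $2$ when $k \ge 1$ and is smaller for $k \le 0$, a careful accounting on the support interval, where at least one existing summand has strictly positive twist, should yield that $T_n - T_{n+1}$ is a positive integer. Case (2) follows by the symmetric structure of (B). The bookkeeping around the $\max(0,\cdot)$ cutoffs is the main technical step I would expect to be delicate.

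Finally, $C$ is ACM iff $h^1(\I_C(n))$ vanishes for every $n$, iff both support intervals $[c, d-c-2]$ and $[d-c+1, c-2]$ are empty. This requires simultaneously $d \le 2c+1$ and $d \ge 2c-2$, giving precisely $2c-2 \le d \le 2c+1$.
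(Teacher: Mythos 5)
Your reading-off of the supports of (\ref{A}) and (\ref{B}) and the resulting ACM criterion is correct, and since the paper supplies no proof of this corollary, extracting everything from the formulas of the preceding proposition is indeed the only route available. The genuine problem is precisely the step you defer: the ``careful accounting'' for strict monotonicity cannot be completed, because the monotonicity assertion is false as stated. In passing from $n$ to $n+1$ in (\ref{A}), each of the $n-c+1$ existing summands loses at most $2$, while the appended summand (at index $i=n+1-c$, carrying the \emph{largest} twist $d-n-c-3$ of the new sum, not the smallest) contributes $\max(0,d-n-c-2)$, which can exceed the total loss. Concretely, for $(c,d)=(1,6)$ formula (\ref{A}) gives $h^1(\I_{C/\P^4}(1))=h^0(\O_{\P^1}(2))=3$ and $h^1(\I_{C/\P^4}(2))=h^0(\O_{\P^1}(0))+h^0(\O_{\P^1}(1))=3$, so the function is constant on part of $[c,d-c-2]=[1,3]$; for $(c,d)=(1,10)$ (realized by a smooth rational curve of degree $11$) one gets $h^1(\I_{C/\P^4}(1))=7$ but $h^1(\I_{C/\P^4}(2))=11$, so the function actually increases before it decreases. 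Case (2) fails symmetrically: for $(c,d)=(10,10)$ formula (\ref{B}) gives the values $1,3,6,10,14,15,13,8$ on $[1,8]$, which is not increasing. What your largest-twist analysis does correctly establish --- and what the remainder of the paper actually uses --- is that $h^1(\I_{C/\P^4}(n))$ is nonzero exactly on the stated interval and vanishes elsewhere, whence the equivalence ``$C$ is ACM iff $2c-2\le d\le 2c+1$.'' You should either restrict the claim to that support statement or replace ``strictly decreasing/increasing'' by a unimodality statement; no bookkeeping around the $\max(0,\cdot)$ cutoffs will rescue strict monotonicity on the full interval.
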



\begin{proposition}
Apart from $N_2$, which is obviously of maximal rank, a smooth curve $C$ of the class $(c,d,\alpha)$ on $X$ has maximal rank iff $(c,d)$ is one of the following:
\[
\{(1,0),(1,1),(1,2),(2,2),(2,3),(2,4),(3,3),(3,4)\}.
\]
Among them only $(1,0)$, $(1,1)$ and $(1,2)$ are ACM. 
\end{proposition}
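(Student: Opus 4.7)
The plan is to apply \Cref{Hilbert} and \Cref{Rao}; their hypothesis $h^1(\O_S(\ell h))=0$ for all $\ell\in\mathbb{Z}$ holds on $S=S(1,2)=H_1$ by \Cref{ACM}. On $S$, $h=\eta+2f$ and $M_2=\eta+f$, so $\deg N_2=1$ and $h\cdot M_2=2$; then \Cref{unique} yields the linkage integer
\[
m \;=\; \frac{2\,\tilde{C}\cdot M_2}{h\cdot M_2} \;=\; \frac{2d}{2} \;=\; d
\]
for a smooth preserved curve $C$ with $\tilde{C}=c\eta+df$ and $d>0$, so $C$ is linked to a preserved curve of class $(d-c,d)$ by the complete intersection of $X$ with a surface of degree $d$ in $\P^3$. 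The remaining class $(1,0)$ corresponds to $C=\eta$, a line in $\P^3$, which is trivially of maximal rank and ACM.

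First I would compute the Rao function $r(n):=h^1(\I_{C/\P^3}(n))$ from \Cref{Rao} by substituting $nh-\tilde{C}=(n-c,2n-d)$, $nh-\tilde{C}-M_2=(n-c-1,2n-d-1)$, and $(n-d)h-M_2=(n-d-1,2n-2d-1)$, then evaluating each $h^0(\O_S(\cdot,\cdot))$ via \Cref{Natural} and the identity $h^0(\O_S(c',d'))=\sum_{i=0}^{c'}\max(d'-i+1,0)$, valid for $c'\ge 0$. The resulting function $r(n)$ is piecewise-linear in $n$ with finite support, bounded explicitly in terms of $c$ and $d$. Analogously, \Cref{Hilbert} yields the initial degree $s_0(C):=\min\{n:h^0(\I_{C/\P^3}(n))>0\}$; since $C\subset X$ we have $s_0(C)\le 3$, with strict inequality precisely when $C$ lies on a plane or a quadric.

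Maximal rank is equivalent to the support of $r$ being contained in $\{n<s_0(C)\}$. For $(c,d)$ outside the listed eight classes I would exhibit an $n\ge s_0(C)$ with $r(n)>0$. The key observation is that $r(3)$ becomes positive once $c+d$ exceeds a small bound and is monotonically non-decreasing under the shifts $(c,d)\mapsto(c+1,d)$ and $(c,d)\mapsto(c,d+1)$ in the relevant range; combined with the explicit formula this rules out every unlisted class. For each of the eight listed classes a direct numerical check, carried out by plugging in the shifts above for all relevant $n$, confirms that $r$ is supported in $\{n\le 2\}$ while $s_0(C)\ge 3$, establishing maximal rank. Among these, $r\equiv 0$ exactly for $(1,0)$, $(1,1)$, $(1,2)$ (equivalently, every term of \Cref{Rao} cancels at every $n$), yielding the ACM classification.

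The main obstacle will be the boundary classes $(2,4)$, $(3,3)$, and $(3,4)$: the Rao function is nonzero for these, but its support squeezes into $\{n\le 2\}$ just below $s_0(C)=3$, and verifying this requires tracking several piecewise regions of the formula that change behaviour near $n=3$ simultaneously. Establishing the monotonicity of $r(3)$ in $(c,d)$ is also delicate, since the individual terms of \Cref{Rao} can grow or shrink in different directions under increments of $c$ or $d-c$; showing that their net contribution is monotone—so that a single threshold calculation rules out the a priori infinite family of non-listed candidates—is where the bulk of the numerical work lies.
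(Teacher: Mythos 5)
Your setup is correct and your overall strategy---show that $h^1(\I_{C/\P^3}(3))>0$ outside a bounded set of classes, then check the finitely many survivors with \Cref{Hilbert} and \Cref{Rao}---is the same as the paper's. The genuine gap is the reduction step. You hang everything on the claim that $r(3)=h^1(\I_{C/\P^3}(3))$ is monotone non-decreasing in $c$ and $d$ ``in the relevant range,'' but you neither prove it nor delimit the range, and it is not a formal consequence of the formula: as $d=m$ increases through small values the term $-h^0(\O_T(3))$ drops from $-15$ to $-18$ to $-19$ (for $d=2,3,4$) and the term $-h^0(\O_S((3-d)h-M_2))$ also moves against the others, so the asserted monotonicity is exactly the hard part, not a threshold bookkeeping exercise. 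The paper avoids this entirely with two targeted inequalities: for $c>3$ every correction term in \Cref{Rao} has negative first coordinate at $n=3$, so $r(3)=3+h^1(\O_S(3-c,6-d))\ge 3$; and for $c\le 3$, $d\ge 5$ it uses the crude bound $r(3)\ge \chi(\O_C(3))-20+h^0(\I_{C/\P^3}(3))$ with $\chi(\O_C(3))=\frac{1}{2}c^2+\frac{7}{2}c+(4-c)d$ and $h^0(\I_{C/\P^3}(3))\ge 1$, treating $(1,5)$ separately since it lies on a second cubic. You should replace the monotonicity claim by arguments of this kind.

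Second, the finite check you defer will not come out the way you describe. For the unlisted classes $(1,3)$ and $(1,4)$ the formulas of \Cref{Hilbert} and \Cref{Rao} give $r(n)=0$ for every $n\ge s_0(C)$: a smooth $(1,3,\alpha)$ curve has degree $(\eta+3f).(\eta+2f)=4$ and genus $0$, i.e.\ it is a smooth rational quartic lying on exactly one quadric ($h^0(\I_{C}(2))=1$, $r(2)=12-3-10+1=0$, $r(3)=22-9-18+5=0$), and a smooth $(1,4,\alpha)$ curve is a rational quintic with $h^0(\I_C(2))=0$ and $r(3)=22-6-19+3=0$. Both therefore pass the maximal rank test in every degree, so you cannot ``exhibit an $n\ge s_0(C)$ with $r(n)>0$'' for them, and the dichotomy you intend to verify (listed classes pass, all unlisted classes fail) is not what the computation yields. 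Before the case analysis can be written down you must resolve this discrepancy---note that the eight listed classes are exactly those with $d\le 2c$, and $(1,3)$, $(1,4)$ are the linkage partners $(d-c,d)$ of the listed $(2,3)$, $(3,4)$, which suggests the intended statement normalizes classes under the involution $(c,d)\mapsto(d-c,d)$; as literally stated, a verification of the ``only if'' direction along your lines will fail at these classes.
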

\begin{proof}
Let $C$ be a smooth curve on $X$ that is not $N_2$.
Suppose $c>3$.
Since $\tilde{C}$ is smooth, it follows that $d\ge c$ by \cite[Cor V.2.18]{AG}.
The expressions of \Cref{Rao} simplies to
\begin{align*}
h^1(\I_{C/\P^3}(3)) =& h^0(\O_S(3h))-\cancelto{0}{h^0(\O_S(3h-\tilde{C}))}+h^1(\O_S(3h-\tilde{C}))-h^0(\O_T(3))\\
& +\cancelto{0}{h^0(\O_S(3h-\tilde{C}-M_2))}-\cancelto{0}{h^0(\O_S((3-d)h-M_2))}\\
=&22+h^1(\O_S(3-c,6-d))-19 \ge 3.
\end{align*}
Since $C$ lies on a cubic surface, it follows that if $C$ is maximal rank then we must have $c\le 3$.
On the other hand, 
\begin{align*}
h^1(\I_{C/\P^3}(3)) &= h^0(\O_C(3))-h^0(\O_{\P^3}(3))+h^0(\I_{C/\P^3}(3))\\
& \ge \chi(\O_C(3))-20+h^0(\I_{C/\P^3}(3)).
\end{align*}
It remains to estimate $H^0(\O_C(3))$.
By Riemann-Roch, we have
\[
h^0(\O_C(3)) \ge \chi(\O_C(3)) = 3 \tilde{C}.h+1-g(C) = \frac{1}{2} c^{2}+\frac{7}{2} c+(4-c)d.
\]
This is an increasing function in $d$, thus if $d\ge 5$ then 
\[
h^0(\O_C(3)) \ge \frac{1}{2} c^{2}-\frac{3}{2} c+20.
\]
Since $c\le 3$, it follows that $h^0(\O_C(3))\ge 20$ unless $c = 1, d = 5$.
But for $(c,d) = (1,5)$, we see that $3h-C$ is effective and thus $C$ lies on another cubic surface, i.e. $h^0(\I_{C/\P^3}(3))\ge 2$.
This shows that if $c\le 3$ and $d\ge 5$ then $h^1(\I_C(3))>0$, therefore a maximal rank curve $C$ must satisfy $c\le 3$ and $d\le 4$.
We compute $h^0(\I_{C/\P^3}(n))$ and $h^1(\I_{C/\P^3}(n))$ by hand using \Cref{Hilbert} and \Cref{Rao} for these finitely many cases and verify the claim.
We omit the computations.
\end{proof}

It follows that for $c>3$ and $2c-2\le d\le 2c+1$, the general projections $C$ of smooth ACM curves in the linear system $|c\eta+df|$ on $S(1,2)$ do not have maximal rank in $\P^3$.
These curves have degree $c+d$ and genus $ -\frac{1}{2}c^2  + cd - \frac{1}{2}c - d + 1$.
Since $d\sim 2c$, we see that $g(C)\sim \frac{1}{6}(\deg C)^2 $ for $c\gg 0$.

\end{document}